\pgfplotsset{compat=1.17}
\definecolor{blue2}{rgb}{0.67, 0.9, 0.93}
\colorlet{siaminlinkcolor}{green!50!black}
\colorlet{siamexlinkcolor}{red!50!black}
\colorlet{siamreviewcolor}{black!50}
\numberwithin{equation}{section}
\newtheorem{theorem}{Theorem}[section]
\newtheorem{lemma}[theorem]{Lemma}
\newtheorem{proposition}[theorem]{Proposition}
\newtheorem{corollary}[theorem]{Corollary}
\newtheorem*{remark}{Remark}
\newcommand{\N}{{\mathbb N}}
\newcommand{\R}{{\mathbb R}}
\newcommand{\Q}{\mathbb{Q}}
\renewcommand{\d}{{\mathrm d}}
\DeclareMathOperator{\sech}{sech}
\title{\sc Stability of Stochastically Forced Solitons in the Korteweg-de Vries Equation }
\author[1]{R.W.S. Westdorp \thanks{\tt \href{mailto:r.w.s.westdorp@math.leidenuniv.nl}{r.w.s.westdorp@math.leidenuniv.nl}}}
\author[1]{H.J. Hupkes \thanks{\tt \href{mailto:hhupkes@math.leidenuniv.nl}{hhupkes@math.leidenuniv.nl}}}
\affil[1]{\small Mathematisch Instituut, Universiteit Leiden, P.O. Box 9512, 2300 RA Leiden, The Netherlands}
\begin{document}

\maketitle
\begin{abstract}
We study the stability and dynamics of solitons in the Korteweg-de Vries (KdV) equation in the presence of noise and deterministic forcing. The noise is space-dependent and statistically translation-invariant. We show that, for small forcing, solitons remain close to the family of traveling waves in a weighted Sobolev norm, with high probability. We study the effective dynamics of the soliton amplitude and position via their variational phase, for which we derive explicit modulation equations. The stability result holds on a time scale where the deterministic forcing induces significant amplitude modulation.
\end{abstract}

\smallskip
\noindent\textbf{Keywords:}  stochastic traveling waves; Korteweg-de Vries equation; stability; forcing.

\smallskip
\noindent\textbf{MSC 2020:} 60H15, 35Q53, 35C08

\section{Introduction} 

This paper studies the stochastic KdV equation
\begin{align}\d u = -(\partial_x^3 u +2 u\partial_x u)\ \d t+\epsilon f(t)u \ \d t+\sigma u\ \d W^Q_t,\label{eqn:skdv}\end{align}
where $u$ is a real-valued process on $(t,x)\in\R^+\times\R$. The scalar parameters $\epsilon>0$ and $\sigma>0$ introduce deterministic and random multiplicative forcing to the KdV dynamics, respectively. The noise process $W_t^Q$ is a Wiener process taking values in $L^2(\R)$.  The noise is white in time and colored in space with translation-invariant statistics given by the formal identity
\[\mathbb{E}\big[W(t,x)W(t,y)\big]=q(x-y)(s\wedge t), \quad x,y\in \R, \quad s,t>0,\]
with $q\in H^1(\R)\cap L^1(\R)$. The deterministic forcing depends only on time, prescribed by the scalar-valued function $f:\R^+\to \R$. We study the effect of this forcing on the family of solitary waves associated to the unforced KdV equation (\eqref{eqn:skdv} with $\epsilon=\sigma=0$). In particular, we establish the stability of forced solitons on time scales where the forcing $\epsilon, \sigma>0$ causes drastic amplitude modulation. This paper complements our formal analysis in \cite{westdorp1} with a rigorous stability result, and extends our deterministic results in \cite{westdorp2} to space-dependent noise.

The Korteweg-de Vries equation is a well-known nonlinear dispersive PDE originally introduced as a model for shallow-water waves \cite{boussinesq, korteweg}. It famously possesses a family of right-traveling wave solutions
$u(t,x)=\phi_c(x-ct)$ with velocity dependent wave-profiles explicitly given by  
\begin{align}\label{eqn:solitons}
    \phi_c(x)=\frac{3c}{2}\sech^2(\sqrt{c}x/2), \quad c>0.
\end{align}
Note that the velocity dependence of the wave profiles satisfies the simple scaling $\phi_c(x)=c\phi_1(\sqrt{c}x)$. The traveling waves \eqref{eqn:solitons}, or solitons, arise due to a balance of dispersion and nonlinear effects. They are of key importance for the KdV dynamics: inverse scattering theory provides exact solutions to the KdV equation in terms of right-traveling solitons and a remaining dispersive component \cite{scattering}.

Many variations on \eqref{eqn:skdv} have been introduced in the literature to incorporate various (random) forcing mechanisms. In particular, we mention \cite{herman}, which first considered the stochastic KdV equation. We refer to our previous works \cite{westdorp1, westdorp2} and references therein for more background on stochastic forcing and deterministic forcing, respectively, in the context of the KdV equation. 

\paragraph{Deterministic stability} The stability of solitons \eqref{eqn:solitons} with respect to an initial perturbation has been extensively studied, for instance, by energy methods \cite{merle}. These rely on the conservative nature of the KdV dynamics: an infinite number of `integrals of motion', such as the $L^2$-norm, is conserved under the KdV flow \cite{miura}. The family \eqref{eqn:solitons} is, as a consequence of dispersion, only marginally linearly stable in $L^2$. Indeed, the linearized dynamics around the soliton $\phi_c$ are detailed by the operator
\begin{align}
    \mathcal{L}_c=-\partial_x^3+c\partial_x-2\partial_x(\phi_c\cdot),\label{eqn:Lc}
\end{align}
which, viewed as an operator on $L^2$, has spectrum $i\R$. In this work we rely heavily on the stability theory for the exponentially weighted spaces
\begin{align}
L^2_w(\R)=&\big\{g: e^{wx}g\in L^2(\R)\big\}\quad \text{with} \quad \|g\|_{L^2_w}=\|e^{wx}g\|_{L^2},\label{eqn:weighted}\\
H^1_w(\R)=&\big\{g: e^{wx}g\in H^1(\R)\big\}\quad \text{with} \quad \|g\|_{H^1_w}=\|e^{wx}g\|_{H^1}\label{eqn:weightedh1} \end{align}
that was developed by Pego and Weinstein in the classic work \cite{pegoweinstein}.
The exponential weight $e^{wx}$ ($w>0$) shifts the continuous spectrum of the operator $\mathcal{L}_c$ with $\sqrt{c}>w$ into the stable half-plane, leaving a spectral gap of size $w(c-w^2)$ and a double eigenvalue at $0$. Physically, the exponential weight dampens persisting disturbances outrun by the soliton. The linearized dynamics contain two neutral modes, associated with infinitesimal changes of $\phi_c$ with respect to the space variable $x$ and the amplitude parameter $c$. These spectral properties ensure that the linear flow $\{e^{\mathcal{L}_ct}\}_{t\geq 0}$ on $L^2_w$ generated by the operator \eqref{eqn:Lc} is exponentially stable on a subspace where the neutral eigenvalue is avoided. This subspace of $L^2_w$ consists of functions $v$ that satisfy the orthogonality conditions
\begin{align}\langle v,\phi_c\rangle=\langle v,\zeta_c\rangle=0,\label{eqn:introortho}\end{align}
where
\[\zeta_c(x)=\int_{-\infty}^x\partial_c\phi_c(y)\d y.\]
Based on these properties, Pego and Weinstein \cite{pegoweinstein} established the orbital stability of the traveling wave family \eqref{eqn:solitons} with respect to a small initial perturbation in $H^1\cap H^1_w$. In fact, such an initial perturbation only causes a small asymptotic phase-shift and amplitude change in the soliton.

\paragraph{Stochastic stability} Stability of the wave family \eqref{eqn:solitons} with respect to small stochastic forcing has been previously considered in \cite{debouardsoliton}. Based on energy methods, de Bouard and Debussche show that solutions to \eqref{eqn:skdv} with $\epsilon=0$ and small $\sigma>0$ stay close to the wave family for times small with respect to $\sigma^{-2}$. Their method relies on the fact that the soliton amplitude remains close to its starting value on this time scale. This is a significant restriction, as the result does not cover any (significant) modulation of the soliton. These stochastic modulations have, however, been explored to some degree in a formal sense. For example, 
the work \cite{cartwright} 
uses a collective coordinate approach to treat
the case where $W_t^Q$ is replaced by a scalar Brownian motion. The numerical and analytical results in \cite{westdorp1} cover both scalar and space-dependent noise
and provide explicit Taylor expansions for the behavior of the modulation parameters.


\paragraph{Present work}
In the present work, we consider deterministic and stochastic forcing of the soliton family \eqref{eqn:solitons}, leading to stochastic modulations of the amplitude and position over time.  We establish the orbital stability of \eqref{eqn:solitons} in a setting where the stochastic amplitude modulations can have arbitrary size. This is primarily due to the deterministic forcing mechanism $(\epsilon>0)$ present in \eqref{eqn:skdv}, which increases/decreases the energy present in the system after time $t$ by a factor $e^{\epsilon \int_0^t f(s)\d s}$ \cite{westdorp2}. In this setting, it is vital to explicitly account for the dynamics of the soliton amplitude. In particular, we will show that, to leading order in the small parameters $\epsilon$ and $\sigma$, this amplitude evolves according to the SDE\footnote{The process $c_{\mathrm{ap}}$ introduced in \eqref{eqn:cap} approximates the soliton amplitude only in distribution. As will be made clear later on, the soliton amplitude is driven by a translated version of $W_t^Q$.}
\begin{align}
    \d c_{\mathrm{ap}} =& \big[ \tfrac{4}{3}c_{\mathrm{ap}}\epsilon f(t)+\sigma^2 g_Q(c_{\mathrm{ap}}) \big] \ \d t+\tfrac{2}{9}c_{\mathrm{ap}}^{-1/2}\sigma \langle \phi_{c_{\mathrm{ap}}}^2, \d W_t^Q \rangle,\label{eqn:cap} 
\end{align}
in which the function $g_Q:\R\to\R$ 
will be made explicit in \Cref{sec:global}.  This allows us to demonstrate that stochastic stability is not limited to trivial changes in amplitude.

\paragraph{Main result}
We study solutions $u$ to \eqref{eqn:skdv} in the (relatively standard) setting \hyperlink{hyp:initial}{S1} -- see \Cref{sec:setup} ahead -- through the modulation Ansatz
\[u\big(t,x+\xi(t)\big)=\phi_{c(t)}(x)+v(t,x).\]
Here, $c(t)$ and $\xi(t)$ are stochastic processes that track the soliton amplitude and position, respectively, which we fully define later on. In particular, we supply \eqref{eqn:skdv} with the initial condition
\[u(0,x)=\phi_{c_*}(x),\]
for some $c_*>0$. The remainder $v(t,x)$ constitutes the error resulting from our modulation approach. The main contribution of this work concerns
the probabilistic behavior of the exit time
\[t_{\mathrm{st}}(\eta)=\sup\big\{t\geq 0:\|v(t)\|_{H^1_w} \leq\eta\big\},\]
which signals a deviation from the modulated soliton description.

The parameter $E$ in our result below provides a-priori (but unlimited) control over the total potential impact of the deterministic forcing. This is related to the fact that
the linear stability properties of the soliton family $\phi_c$ are limited by the soliton amplitude $c$. Indeed, the $w(c-w^2)$-wide spectral gap of the linear operator $\mathcal{L}_c$ on $L^2_w$ is at most $\frac{2}{3\sqrt{3}}c^{3/2}$ (take $w=\sqrt{c/3}$). The constants 
appearing in many of our estimates therefore require $c$ to be kept within a fixed range $[c_{\mathrm{min}}, c_{\mathrm{max}}]$ that is controlled by $E$.

\begin{theorem}[See \Cref{sec:proofmain}]\label{thm:bigthm}
Pick $c_*,E>0$ and $w\in (0,\sqrt{c_*}/3]$. Assuming \hyperlink{hyp:initial}{S1}, there exist constants $\eta_0,C,\delta>0$ such that the following holds true. For all $\eta\in[0,\eta_0]$, $C\sigma,C\epsilon\in [0,\eta]$, each $T\geq 1$,
and each continuous function $f:\R^+\to \R$ for which
\begin{align}
\sup_{t\geq 0}|f(t)|\leq 1, \quad \text{and}\quad\epsilon \int_0^\infty |f(t)|\d t\leq E,\label{eqn:fassumption}\end{align}
the exit time $t_{\mathrm{st}}$ satisfies
    \begin{align}\mathbb{P}\big[t_{\mathrm{st}}(\eta)<T\big]\leq CT\sigma^2\log(1/\sigma)+ CTe^{-\delta\eta^2/\sigma^2}.\label{eqn:finalresult}\end{align}
\end{theorem}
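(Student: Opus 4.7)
My plan is to combine a modulation decomposition with a stochastic Lyapunov analysis in the exponentially weighted space $H^1_w$, relying throughout on the Pego--Weinstein spectral framework. The first step is to impose the Ansatz $u(t,x+\xi(t))=\phi_{c(t)}(x)+v(t,x)$ together with the orthogonality conditions \eqref{eqn:introortho}. Applying It\^o's formula to these two scalar constraints produces SDEs for $c(t)$ and $\xi(t)$ whose drifts and diffusion coefficients involve $v$, the deterministic forcing $\epsilon f$, and stochastic integrals against $W_t^Q$. Subtracting these modulation contributions from the mild form of \eqref{eqn:skdv} yields a remainder equation of the schematic shape
\[dv=\mathcal{L}_{c(t)}v\,dt+\mathcal{N}(v,c,\dot c,\dot\xi)\,dt+\epsilon f(t)\mathcal{R}_1(v,c)\,dt+\sigma^2\mathcal{R}_2(v,c)\,dt+\sigma\,dM_t,\]
with $\mathcal{L}_{c(t)}$ as in \eqref{eqn:Lc}, $\mathcal{N}$ absorbing the nonlinear and modulational back-reaction, and $M_t$ an $H^1_w$-valued martingale driven by $W_t^Q$.

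To apply the spectral gap uniformly I would introduce an auxiliary stopping time $\tau_c$ at which $c(t)$ first leaves a fixed amplitude window $[c_{\mathrm{min}},c_{\mathrm{max}}]$ chosen so that $c_*$ sits in the interior and the eventual amplitude reach implied by $\epsilon\int|f|\leq E$ stays inside. A separate tail estimate on the approximating SDE \eqref{eqn:cap}, together with a closeness bound between $c(t)$ and $c_{\mathrm{ap}}(t)$ on the event $\{t<t_{\mathrm{st}}(\eta)\}$, shows that $\mathbb{P}[\tau_c<T]$ is dominated by the right-hand side of \eqref{eqn:finalresult}, so one may restrict attention to $\{t<\tau_c\}$, on which the spectral gap of $\mathcal{L}_{c(t)}$ on $L^2_w$ is bounded uniformly below by some $\delta_0>0$ and all constants depend only on $c_{\mathrm{min}}, c_{\mathrm{max}}$, and $E$.

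The heart of the argument is an It\^o expansion of the Lyapunov functional $\|v(t)\|_{H^1_w}^2$. Using the orthogonality conditions to eliminate the neutral directions, the spectral gap to dominate the linear part, and the a-priori bound $\|v\|_{H^1_w}\leq\eta$ valid up to $t_{\mathrm{st}}$, the drift decomposes as
\[d\|v\|_{H^1_w}^2=\bigl[-2\delta_0\|v\|_{H^1_w}^2+O(\eta+\epsilon|f(t)|)\|v\|_{H^1_w}^2+\sigma^2\kappa_Q\bigr]\,dt+\sigma\,dN_t,\]
where $\kappa_Q$ collects the trace-class It\^o correction coming from the noise projected onto the orthogonal complement of $\mathrm{span}\{\phi_c,\zeta_c\}$, and $N_t$ is a scalar martingale with quadratic variation bounded by $C\sigma^2\|v\|_{H^1_w}^2+C\sigma^2$. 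Applying an exponential supermartingale estimate to $\Psi_t=\exp\bigl(\lambda\|v(t)\|_{H^1_w}^2\bigr)$ with $\lambda$ of order $\delta/\sigma^2$ then balances the diffusion against the linear contraction and produces the Gaussian tail $e^{-\delta\eta^2/\sigma^2}$; the $CT\sigma^2\log(1/\sigma)$ contribution arises from the regime $\|v\|_{H^1_w}\lesssim\sigma\sqrt{\log(1/\sigma)}$ where the deterministic contraction no longer dominates the constant source $\sigma^2\kappa_Q$, and which must be handled by direct moment estimates integrated over $[0,T]$ (yielding the linear $T$ factor).

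The principal technical obstacle, as in all modulation analyses under multiplicative noise, is the coupling between the scalar SDEs for $(c,\xi)$ and the infinite-dimensional equation for $v$. The drifts $\dot c,\dot\xi$ carry $O(\sigma)$ stochastic contributions that feed back through $\mathcal{N}$ into the evolution of $v$, and the Hilbert--Schmidt norm of the noise operator restricted to the $c$-dependent orthogonal complement of $\mathrm{span}\{\phi_c,\zeta_c\}$ must be tracked carefully, since this projection is random. Maintaining uniform constants across the full regime $C\sigma,C\epsilon\in[0,\eta]$, $T\geq 1$, and handling the commutators between the amplitude-dependent projection and the stochastic integral against $W^Q$, accounts for most of the work. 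Once this bookkeeping is in place, the exponential martingale estimate combines with the amplitude stopping-time reduction to deliver \eqref{eqn:finalresult}.
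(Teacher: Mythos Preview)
Your proposal has a genuine gap: it omits the complementary control of the \emph{unweighted} norm $\|v(t)\|_{L^2}$ that the weighted analysis requires. The nonlinearity $\mathcal{N}$ contains $\partial_x(v^2)$, and in the weighted space one estimates, for instance, $\|\partial_x(v^2)\|_{L^1_w}\le 2\|v\|_{L^2}\|v\|_{H^1_w}$ or $\|v\partial_x v\|_{L^2_w}\le\|v\|_{L^\infty}\|\partial_x v\|_{L^2_w}$; neither $\|v\|_{L^2}$ nor $\|v\|_{L^\infty}$ is controlled by $\|v\|_{H^1_w}$ alone, since the weight $e^{wx}$ vanishes as $x\to-\infty$. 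Your claimed drift bound $O(\eta)\|v\|_{H^1_w}^2$ for the nonlinear back-reaction therefore does not follow from $\|v\|_{H^1_w}\le\eta$. In the paper this is handled by a separate energy argument in $L^2$ (where the soliton is only neutrally stable), and it is precisely the It\^o drift in that unweighted energy---growing like $\sigma^2 t$---combined with a $\log(1/\sigma)$ loss in the final optimisation over $\eta$, that generates the $CT\sigma^2\log(1/\sigma)$ term in \eqref{eqn:finalresult}. Your attribution of this term to a small-$\|v\|_{H^1_w}$ regime inside the weighted Lyapunov argument is not where it comes from.

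A second difficulty is the direct It\^o expansion of $\|v\|_{H^1_w}^2$ with the coercivity claim $d\|v\|^2_{H^1_w}=[-2\delta_0\|v\|^2_{H^1_w}+\ldots]\,dt$. The Pego--Weinstein framework yields semigroup decay $\|e^{\mathcal{L}_c t}Q_c g\|_{L^2_w}\le Me^{-bt}\|g\|_{L^2_w}$ with $M>1$, not a dissipativity inequality $\langle\mathcal{L}_c v,v\rangle_{H^1_w}\le-\delta_0\|v\|^2_{H^1_w}$; the operator $\mathcal{L}_c$ is far from self-adjoint in $L^2_w$. In addition, the global remainder equation carries an It\^o correction proportional to $\partial_x^2 v$ (from the stochastic phase shift), for which $v\in H^1$ is insufficient regularity to run your expansion. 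The paper avoids both issues by freezing the amplitude on short intervals $[n\Delta T,(n+1)\Delta T]$, introducing a \emph{local} modulation system orthogonal to the fixed eigenfunctions at $c(n\Delta T)$, working with the mild formulation under the autonomous semigroup $\{e^{\mathcal{L}_{c(n\Delta T)}s}\}_{s\ge0}$ together with Gaussian tail bounds for the stochastic convolution, and then chaining the resulting short-time estimates. A single global Lyapunov functional does not capture this structure.
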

The probability bound \eqref{eqn:finalresult} contains two contributions. The exponential part $Te^{-\delta\eta^2/\sigma^2}$ stems from the linear stability properties of the wave family \eqref{eqn:solitons} on weighted spaces, and matches modern phase-tracking results valid on time scales that are exponentially long with respect to the ratio $\eta^2/ \sigma^{2}$ \cite{mark, marklocal, christianphase, maclaurin, joris}. The contribution $T\sigma^2\log(1/\sigma)$, however, limits the time-scale on which stability can be obtained to times $T$ that are \textit{small} with respect to $\sigma^{-2}\log(1/\sigma)^{-1}$, but independent of the size-parameter $\eta$.

The limiting factor is that we rely on a-priori control of the remainder $v$ in the unweighted space $L^2$. In this space, the solitons $\phi_c$ are not linearly stable, and the norm $\|v(t)\|^2_{L^2}$ grows linearly in time. The factor $\sigma^2\log(1/\sigma)$ stems from an It\^o drift term proportional to $\sigma^2$, combined with a factor $\log(1/\sigma)$ that we need to account for the potentially large fluctuations of the soliton amplitude.



We emphasize that within the timescale discussed above,
the deterministic forcing $(\epsilon>0)$ can cause significant modulation of the soliton amplitude $c(t)$. Thus,
the main point of our results is that they demonstrate that the stability of the wave family \eqref{eqn:solitons} is not limited to small fluctuations. To showcase this, we obtain a result on the exit time
\[t_{\mathrm{ap}}(\lambda)=\sup\big\{t\geq 0:|c(t)-c_{\mathrm{ap}}(t)|\leq\lambda\big\},\]
which describes the validity of the approximation \eqref{eqn:cap}.

\begin{theorem}[See \Cref{sec:validity}]\label{thm:validity}
   Assuming the setting of \Cref{thm:bigthm}, for each $\lambda> 0$ that satisfies \[ \lambda\leq \min\{\tfrac{1}{2}c_*e^{-3E},c_*e^{3E}\},\] the exit times $t_{\mathrm{ap}}$ and $t_{\mathrm{st}}$ satisfy
    \begin{align}\label{eqn:approximation}
    \mathbb{P}\big[t_{\mathrm{ap}}(\lambda)<T\big]\leq C\frac{\sigma^2}{\lambda}\log(1/\sigma).
\end{align}
\end{theorem}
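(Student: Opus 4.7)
The strategy is to subtract the defining SDE \eqref{eqn:cap} for $c_{\mathrm{ap}}$ from the modulation SDE governing the true amplitude $c(t)$, and to control the resulting error via the $H^1_w$-bound on $v$ delivered by \Cref{thm:bigthm}. From the modulation framework developed in the earlier sections, $c(t)$ satisfies an It\^o SDE that agrees with \eqref{eqn:cap} at $v=0$, with correction terms that vanish with $v$:
\[
\d c = \Big[\tfrac{4}{3}c\epsilon f(t) + \sigma^2 g_Q(c)\Big]\,\d t + \tfrac{2}{9}c^{-1/2}\sigma \langle \phi_c^2, \d W_t^Q\rangle + \mathcal{R}_1(v,c,t)\,\d t + \langle \mathcal{R}_2(v,c,t), \d W_t^Q\rangle.
\]
Uniformly for $c$ in the admissible compact range $[c_{\min},c_{\max}]$, the modulation structure should yield $|\mathcal{R}_1|\leq C(\|v\|_{H^1_w}^2+\sigma^2\|v\|_{H^1_w})$ and $\|\mathcal{R}_2\|_{L^2}\leq C\sigma\|v\|_{H^1_w}$; the point is that the orthogonality conditions \eqref{eqn:introortho} are designed precisely to eliminate the $O(\|v\|_{H^1_w})$ linear coupling of $v$ into the amplitude equation, so the remainders carry at least one additional power of $\sigma$ or $\|v\|_{H^1_w}$.

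Subtracting \eqref{eqn:cap} then gives an It\^o SDE for $d(t):=c(t)-c_{\mathrm{ap}}(t)$ of the form
\[
\d d = \big[A(t)\,d + \mathcal{R}_1(v,c,t)\big]\,\d t + \big\langle B(t)\,d + \mathcal{R}_2(v,c,t),\,\d W_t^Q\big\rangle,
\]
where the bounded coefficients $A(t),B(t)$ arise from the Lipschitz continuity of $g_Q$ and of $c\mapsto c^{-1/2}\phi_c^2$ on the compact interval $[c_{\min},c_{\max}]$. The hypothesis $\lambda\leq\min\{\tfrac{1}{2}c_*e^{-3E},c_*e^{3E}\}$, combined with the deterministic-forcing envelope $e^{\epsilon\int f\,\d s}$ that keeps $c_{\mathrm{ap}}$ in a known range (cf.\ \cite{westdorp2}), ensures that both $c$ and $c_{\mathrm{ap}}$ remain inside $[c_{\min},c_{\max}]$ on the joint stopping set $\{t\leq t_{\mathrm{st}}(\eta)\wedge t_{\mathrm{ap}}(\lambda)\}$, so that the Lipschitz bounds apply.

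Next, I would introduce the stopping time $\tau:=t_{\mathrm{st}}(\eta)\wedge t_{\mathrm{ap}}(\lambda)\wedge T$ and estimate $\mathbb{E}\big[\sup_{t\in[0,\tau]}|d(t)|\big]$ using It\^o's formula, Burkholder-Davis-Gundy, and Gronwall. Since $\|v\|_{H^1_w}\leq\eta$ on $[0,\tau]$, the drift remainder $\mathcal{R}_1$ contributes at most a term of size $T(\eta^2+\sigma^2\eta)$, while the quadratic variation of $\langle \mathcal{R}_2,\d W_t^Q\rangle$ produces at most $\sigma\eta\sqrt{T}$ in $L^1$. Choosing the bookkeeping parameter $\eta\sim\sigma\sqrt{\log(1/\sigma)}$ --- the value that balances the two contributions appearing in the bound of \Cref{thm:bigthm} --- yields
\[
\mathbb{E}\Big[\sup_{t\in[0,\tau]}|d(t)|\Big]\leq C\sigma^2\log(1/\sigma).
\]
Markov's inequality applied to the stopped difference, together with \Cref{thm:bigthm} used to dispatch the event $\{t_{\mathrm{st}}(\eta)<T\}$, then delivers the stated bound through
\[
\mathbb{P}\big[t_{\mathrm{ap}}(\lambda)<T\big]\leq \mathbb{P}\big[t_{\mathrm{st}}(\eta)<T\big]+\frac{1}{\lambda}\,\mathbb{E}\Big[\sup_{t\in[0,\tau]}|d(t)|\Big].
\]

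The main obstacle is verifying that the drift remainder $\mathcal{R}_1$ has no $O(\|v\|_{H^1_w})$ term but only $O(\|v\|_{H^1_w}^2)$ and $O(\sigma^2\|v\|_{H^1_w})$ contributions: otherwise the forcing on $d$ would scale like $\eta$ rather than like $\eta^2+\sigma^2\eta$, destroying the $\sigma^2\log(1/\sigma)$ decay in the final probability bound. This cancellation must be traced back to the exact form of $g_Q$ and to the orthogonality projections, and is the delicate step of the argument; once it is in hand, the remainder of the proof is standard stochastic calculus bookkeeping.
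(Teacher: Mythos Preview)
Your strategy is essentially the paper's: subtract \eqref{eqn:cap} from \eqref{eqn:postulate1}, bound the $v$-dependent remainders via \Cref{lem:modcontrol}, apply BDG and Gr\"onwall to control $\mathbb{E}\sup|c-c_{\mathrm{ap}}|$ up to a stopping time, use Markov, and then optimize over $\eta$ against the bound from \Cref{thm:bigthm}. The paper carries this out in \Cref{lem:lip}--\Cref{lem:markov} and the final proof, arriving at the same $\eta^2=\tfrac{\sigma^2}{\delta}\log(1/\sigma^2)$ balance you anticipate.

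One point deserves correction. Your claimed structure $|\mathcal{R}_1|\leq C(\|v\|_{H^1_w}^2+\sigma^2\|v\|_{H^1_w})$ is not quite right: the deterministic forcing contributes $\epsilon f(t)\big(c_f(v,c)-c_f(0,c)\big)$, which by \eqref{eqn:c?control} is $O(\epsilon\|v\|_{L^2_w})$, \emph{linear} in $v$. The orthogonality conditions do not kill this term; they are responsible only for the quadratic bound $|c_d^0(v,c)|\leq C\|v\|_{L^2_w}^2$ in \eqref{eqn:cd0control}, which comes from $\langle N(v),\phi_c\rangle=\langle v^2,\partial_x\phi_c\rangle$. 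So the ``delicate cancellation'' you flag is in fact straightforward and concerns only the unforced nonlinear piece. The linear $\epsilon$-term is harmless for a different reason: the standing hypothesis $C\epsilon\leq\eta$ from \Cref{thm:bigthm} gives $\epsilon\|v\|_{L^2_w}\lesssim\eta^2$ on the stopping set, so it feeds into the same $T\eta^2$ bucket. The paper also stops at $t_c$ (not only $t_{\mathrm{st}}$ and $t_{\mathrm{ap}}$) to keep $c$ in the compact range where the Lipschitz bounds apply; the event $\{t_c<T\}$ is then absorbed into the bound coming from \Cref{thm:bigthm} via the larger event $\mathcal{C}(\eta)$ controlled in \Cref{sec:proofmain}.
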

\paragraph{Approach} The field of stochastic traveling waves has witnessed rapid development in recent years. Several approaches have emerged to study stochastic traveling waves in various PDE settings. These typically feature a decomposition of the solution in terms of a traveling wave modulated by a stochastic phase-shift. The exact definition of this phase-shift is where the various methods differ. Let us mention the phase-lag method \cite{kruger, manuel}, variational-phase \cite{mark, marklocal, christianphase, maclaurin} and isochronal-phase \cite{zachary}, all applied in the context of reaction-diffusion equations. In dispersive settings, (adaptations of) the variational phase have been applied in \cite{debouardsoliton, debouardadditive, debouardnls, westdorp2, joris}. Let us also mention the recent work \cite{jorisbanach}, which presents a phase-tracking mechanism for general symmetry groups.

In the present context, where we study the wave family \eqref{eqn:solitons}, we follow our approach in \cite{westdorp2} and decompose the solution $u$ to \eqref{eqn:skdv} as
\[u\big(t,x+\xi(t)\big)=\phi_{c(t)}(x)+v(t,x).\]
Here, $c(t)$ and $\xi(t)$ are processes that track the soliton amplitude and position, and ensure the orthogonality conditions
\begin{align} \big\langle v(t,\cdot),\phi_{c(t)}\big\rangle=\big\langle v(t,\cdot),\zeta_{c(t)}\big\rangle=0, \quad t\geq 0.\label{eqn:variational}\end{align}
We point out a technical but essential difference with \cite{westdorp2}, where we decomposed $u$ in terms of a fixed soliton $\phi_{c_*}$ in a rescaled frame. We do not employ this coordinate-transformation here, to avoid It\^{o} correction terms in the evolution equation of the remainder $v$. This leaves the challenge of harnessing the linear stability properties of a time-varying soliton $\phi_{c(t)}$. In spirit of \cite{maclaurin}, we employ the linear stability properties at some time $T>0$ of $\mathcal{L}_{c(T)}$ on an interval $[T,T+\Delta T]$. On such intermediate intervals, the soliton amplitude $c(t)$ does not deviate much from $c(T)$. We furthermore introduce local modulation parameters on the interval $[T,T+\Delta T]$ resulting in a local remainder $v^T$ which enjoys exponential damping by the linear flow $\{e^{\mathcal{L}_{c(T)}t}\}_{t\geq0}$. For $t\in[T,T+\Delta T]$, the local remainder can be used to control the global remainder $v(t)$.

\paragraph{Challenges}
 Classically, analysis of the $L^2_w$-norm of $v(t)$ requires complementary control on the unweighted $L^2$-norm of $v(t)$. This is due to the nonlinearity $u\partial_x u$ present in \eqref{eqn:skdv}, which is typically estimated in the weighted $L^2$-norm as
\begin{align}\int_{\R} e^{2wx}u^2(x)u^2_x(x)\d x\leq \|u\|^2_{\infty}\int_{\R} e^{2wx}u^2_x(x)\d x\leq 2\|u\|^2_{H^1}\|u_x\|^2_{L^2_w}.\label{eqn:infty}\end{align}
The unweighted $H^1$-norm of $v(t)$ can in turn be analyzed via energy methods, see \cite[Section 6]{westdorp2}. A formal application of the It\^o lemma shows that, in the presence of stochastic forcing $(\sigma>0)$, the energy of solutions to \eqref{eqn:skdv} evolves as
\[\d \|u\|_{L^2}^2=\sigma^2 \|u\|^2_{L^2} \ \d t+2\epsilon f(t)\|u\|_{L^2}^2\ \d t+2 \sigma \langle u,u \ \d W_t^Q\rangle.\] 
As we will see later on, due to an It\^o drift correction, the energy in $v(t)$ grows proportionally to $\sigma^2 t$ at leading order. We incur a further $\log( 1/\sigma)$ penalty as a consequence of the $\mathcal{O}(v^2)$ terms,
leading to the first term in \eqref{eqn:finalresult}.

At present, we can hence not carry out our arguments on a time-interval $[0,T_{\mathrm{max}}\sigma^{-2}]$ for \textit{any} $T_{\mathrm{max}}>0$, which we believe should be attainable with more refined arguments in the special case $\epsilon =0 $. Indeed, inspecting the approximation
\eqref{eqn:cap} with $\epsilon=0$,
we see that  the effective soliton amplitude $c$ is driven by the noise $\sigma W_t^Q$. We can hence expect $c$ to be kept within the range $[c_{\mathrm{min}}, c_{\mathrm{max}}]$ on time scales proportional to $\sigma^{-2}$ and the (arbitrary) size of this range. 


Furthermore, the bound \eqref{eqn:infty} shows that it is in fact sufficient to control the \textit{supremum} norm of $v(t)$, instead of an energy norm. Preliminary numerical investigations suggest that this norm remains under control for timescales longer than $\sigma^{-2}$, showing that the dynamics of $c$ are indeed dominant. However, bounds of this nature will require us to depart from energy-based methods, which are only available in $L^2$-based spaces. We envision that future work in this direction will center on a careful (and challenging) direct pointwise analysis of the dispersive dynamics that drive the remainder $v(t)$ in the wake of the soliton. We emphasize however that the framework developed here to control the weighted norm of $v$ can readily accommodate such a refinement.

\paragraph{Outline}
This paper is organized as follows. In \Cref{sec:setup}, we outline the setting of \eqref{eqn:skdv} in more detail and recall classical linear stability results regarding the operator $\mathcal{L}_c$ defined in \eqref{eqn:Lc}. Then, in \Cref{sec:global}, we analyze the modulation system brought forth by the conditions \eqref{eqn:variational}. Following this, in \Cref{sec:local}, we introduce local approximations to this system. In \Cref{sec:weighted}, we carry out time-discretized stability arguments for the local remainder $v^T$ in the weighted spaces $H^1_w$. We supplement this with an analysis of the local modulation parameters in \Cref{sec:parameters}. In \Cref{sec:energy}, we develop global control of the unweighted $L^2$-norm of the remainder $v$, as well as the global amplitude parameter $c(t)$. Finally, we combine our results in \Cref{sec:proofmain} and \Cref{sec:validity} for the proofs of \Cref{thm:bigthm} and \Cref{thm:validity}.

\paragraph{Acknowledgments} Both authors acknowledge support from the Dutch Research Council (NWO) (grant 613.009.137).

\section{Preliminaries}\label{sec:setup}
Below, we describe the setting of \eqref{eqn:skdv} in more detail and collect several results regarding the linear stability of the soliton family \eqref{eqn:solitons} as developed in \cite{pegoweinstein}.

\paragraph{Stochastic set-up}
We work in the following setting, which we recall from \cite[Section 2.1.2]{westdorp1}.
\begin{itemize}
\item[\textbf{S1}]{
  \phantomsection\hypertarget{hyp:initial}{}\textit{
  We have $c_*>0$, and supply \eqref{eqn:skdv} with the initial condition
\begin{align}u(0,x)=\phi_{c_*}(x).\label{eqn:initial}\end{align}
The operator $Q\in\mathcal{L}(L^2)$ is defined as the convolution with an even function $q\in H^1\cap L^1$ through
\[Qf=\int_{\R}q(x-y)f(y)\d x.\]
Moreover, the Fourier transform $\hat{q}$ of the kernel $q$ is non-negative. Lastly, the forcing term $f:\R^+\to \R$ is continuous and bounded as
\[|f(t)|\leq 1, \quad t\geq 0.\]}} 
\end{itemize}

By Young's convolution inequality, the integrability assumption $q\in L^1$ ensures that $Q$ is a bounded operator on $L^2(\R)$. The non-negativity of the Fourier transform $\hat{q}$ furthermore ensures that $Q$ is a symmetric and non-negative operator. Thus, $Q$ can be used to construct an $L^2$-valued cylindrical Brownian motion $W_t^Q$ on a filtered probability space $(\Omega,\mathcal{F},\mathbb{F},\mathbb{P})$ cf. \cite{daprato,liurockner}, which satisfies the formal covariance identity
\begin{align*}
    \mathbb{E}\bigl[\d W^Q(x,t)\d W^Q(y,s)\bigr]=\delta(t-s)q(x-y),\quad x,y\in \R, \quad s,t>0.
\end{align*}
Since $q$ is an even function, the spatial correlation of $W^Q_t$ depends only on the distance $|x-y|$ between two points $x,y\in \R$ and preserves the translation-invariance of \eqref{eqn:skdv}. We recall furthermore from \cite[Section 2.1.2]{westdorp1} that the non-negative operator $Q$ has a square-root $Q^{1/2}$ that acts as the convolution
\begin{align*}
    Q^{1/2}f(x)=\int_{\R}q_{1/2}(x-y)f(y)\ \d y,
\end{align*}
where $q_{1/2}$ is the inverse Fourier transform of $\sqrt{\hat{q}}$. 

Let us now introduce some notation related to stochastic integration with respect to the noise $W_t^Q$. Letting $\{e_k\}_{k=0}^{\infty}$ be an orthonormal basis of $L^2(\R)$, and introducing the space
$L^2_Q:=Q^{1/2}(L^2)$ equipped with the inner product
\[\langle v, w\rangle_{L^2_Q}=\langle Q^{-1/2}v,Q^{-1/2}w\rangle_{L^2},\]
we note that $L^2_Q$ is a separable Hilbert space for which $\{Q^{1/2}e_k\}_{k=0}^\infty$ is an orthonormal basis. We furthermore denote by $\mathrm{HS}(L^2_Q,\mathcal{H})$ the space of Hilbert-Schmidt operators between $L^2_Q$ and a Hilbert space $\mathcal{H}$, equipped with the inner-product 
\[\langle A,B\rangle_{\mathrm{HS}(L^2_Q,\mathcal{H})}=\sum_{k=0}^\infty \bigl\langle A[Q^{1/2}e_k],B[Q^{1/2}e_k]\bigr\rangle_{\mathcal{H}}.\]

In the setting \hyperlink{hyp:initial}{S1},  de Bouard and Debussche \cite{debouardwellposedness} showed that  \eqref{eqn:skdv} admits unique mild solutions for purely stochastic forcing $(\epsilon=0)$. For deterministic forcing $(\sigma=0)$, the well-posedness of \eqref{eqn:skdv} has been established in \cite[lemma 2.1]{westdorp2}. Combining these results yields the following well-posedness property, in which $\{e^{-\partial_x^3t}\}_{t\in\R}$ denotes the $C_0$-group of isometries on $L^2$ associated to the Airy equation.
\begin{lemma}\label{lem:wellposed}[\cite{debouardadditive,westdorp2}]
    Assuming \hyperlink{hyp:initial}{S1}, \eqref{eqn:skdv} has a unique mild solution $u\in L^2(\Omega;C(\R^+;L^2(\R))$, that satisfies
\begin{align*}
    u(t)=&e^{-\partial_x^3 t}u_0-\int_0^t e^{-\partial_x^3 (t-t')}\partial_x\big(u^2(t')\big)\d t'+\epsilon\int_0^t f(t')e^{-\partial_x^3 (t-t')}u(t')\d t'+\sigma\int_0^t e^{-\partial_x^3 (t-t')}u(t')\d W_{t'}^Q,
\end{align*}
and has paths that are almost surely in $C(\R^+;H^1(\R))$.
\end{lemma}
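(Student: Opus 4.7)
The plan is to reduce the combined-forcing problem to the two cases already available in the literature via a scalar gauge transformation that absorbs the deterministic forcing. Setting $F(t) = \int_0^t f(s)\,\d s$ and defining $v(t,x) = e^{-\epsilon F(t)} u(t,x)$, a direct application of the It\^o product rule (noting that $e^{-\epsilon F(t)}$ is a smooth, deterministic function of $t$ alone) shows that $u$ solves \eqref{eqn:skdv} in the mild sense if and only if $v$ solves
\begin{align*}
    \d v = -\bigl(\partial_x^3 v + 2 e^{\epsilon F(t)} v \partial_x v\bigr)\,\d t + \sigma v\,\d W_t^Q, \qquad v(0) = \phi_{c_*}.
\end{align*}
Because $|f|\le 1$, the multiplier $e^{\epsilon F(t)}$ is bounded on every finite interval, so this equation is structurally identical to the one treated in \cite{debouardwellposedness}, modulo a continuous, deterministic, time-dependent scalar in front of the nonlinearity.

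I would then run the de Bouard--Debussche Banach fixed-point argument for this modified equation, working in a space of the form $L^p\bigl(\Omega; C([0,T]; H^1) \cap L^q([0,T]; W^{1,r})\bigr)$ with exponents tuned to the Strichartz / Kato smoothing estimates for the Airy group $\{e^{-\partial_x^3 t}\}_{t\in\R}$. The deterministic Duhamel integral and the stochastic convolution are estimated exactly as in \cite{debouardwellposedness}; the bounded multiplier $e^{\epsilon F(s)}$ only contributes a uniform prefactor in each nonlinear estimate, preserving the contraction on a short time interval. Globalisation to arbitrary $T$ rests on the $L^2$ energy identity for $v$, which after the gauge transformation reads
\begin{align*}
    \d \|v\|_{L^2}^2 = \sigma^2 q(0) \|v\|_{L^2}^2\,\d t + 2\sigma\bigl\langle v, v\,\d W_t^Q\bigr\rangle,
\end{align*}
and hence precludes finite-time blow-up in expectation (the $\partial_x^3$ term and the cubic term vanish after integration by parts).

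Passing back to $u = e^{\epsilon F(t)} v$ is immediate, since multiplication by the deterministic scalar $e^{\epsilon F(t)}$ is a time-dependent automorphism of $C(\R^+; H^1)$; uniqueness of $u$ is inherited from that of $v$, and the mild formulation stated in the lemma follows from Duhamel applied to $v$ by unwinding the scalar factor. The delicate point I would anticipate is the almost-sure continuity of paths in $H^1$, which rests on a Kolmogorov-type argument for the stochastic convolution whose key input is the Hilbert-Schmidt bound
\begin{align*}
    \bigl\|v\cdot Q^{1/2}\bigr\|_{\mathrm{HS}(L^2_Q, H^1)}^2 \leq C \|q\|_{H^1}^2 \|v\|_{H^1}^2,
\end{align*}
which exploits $q \in H^1$ in an essential way. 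The main obstacle is bookkeeping rather than new technique: verifying that the Strichartz-type exponents chosen for the fixed-point space remain compatible with the bounded multiplier $e^{\epsilon F(t)}$, and that the resulting $H^1$ path regularity of $v$ transfers back to $u$ on every bounded time interval.
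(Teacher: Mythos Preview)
The paper does not actually prove this lemma; it is stated with citations to \cite{debouardadditive,westdorp2}, and the preceding paragraph merely asserts that ``combining'' the $\epsilon=0$ result of de Bouard--Debussche with the $\sigma=0$ result of \cite{westdorp2} yields well-posedness for the mixed equation. Your gauge transformation $v=e^{-\epsilon F(t)}u$ is a clean and correct way to make that combination explicit: it reduces the problem to a stochastic KdV equation with a bounded, deterministic, time-dependent scalar in front of the nonlinearity, which does fall under the de Bouard--Debussche fixed-point framework with only cosmetic changes to the nonlinear estimates.

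One small caution: globalisation in $H^1$ does not follow from the $L^2$ identity alone. In the de Bouard--Debussche scheme, global $H^1$ control comes from an It\^o identity for the Hamiltonian $\int \tfrac{1}{2}v_x^2 - \tfrac{1}{3}v^3\,\d x$; for your gauge-transformed equation the natural Hamiltonian is time-dependent, $H_t(v)=\int \tfrac{1}{2}v_x^2 - \tfrac{1}{3}e^{\epsilon F(t)}v^3\,\d x$, and its It\^o differential acquires an extra (bounded) term $-\tfrac{\epsilon}{3}f(t)e^{\epsilon F(t)}\int v^3\,\d x$. This is harmless, but you should invoke it explicitly, since the $L^2$ bound by itself only controls $\|v\|_{L^2}$ and not $\|v_x\|_{L^2}$.
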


\paragraph{Linear stability tools}

Our arguments rely heavily on the linear stability properties of the operator $\mathcal{L}_c$ defined in \eqref{eqn:Lc}, that were developed in \cite{pegoweinstein}. We recall from this work that $\mathcal{L}_c$ satisfies the eigenvalue relations $\mathcal{L}_{c} \partial_x\phi_{c}=0$ and $\mathcal{L}_{c} \partial_c \phi_{c}=\partial_x\phi_{c}$, giving rise to a two-dimensional generalized kernel. The (formal) adjoint $\mathcal{L}^*_c$ also has a two-dimensional kernel, spanned by $\phi_c$ and the primitive
\[\zeta_c (x)=\int_{-\infty}^x \partial_c \phi_c(y)\d y.\]
This function satisfies $\zeta_c \in L^2_{-w}$ (but not $\zeta_c\in L^2$). With this notation in place, we note that the projection onto the generalized kernel of $\mathcal{L}_c$ is given by
\begin{align}P_{c} f=\langle f, \tfrac{2}{9}c^{-1/2}\zeta_c+\tfrac{2}{9}c^{-2}\phi_c  \rangle \partial_x \phi_{c} +\langle f, \tfrac{2}{9}c^{-1/2}\phi_c\rangle \partial_c \phi_c.\label{eqn:spectral}\end{align}
We write  $Q_c=I-P_c$ for the complementary projection, and note that $\operatorname{Ker}(P_c)$ coincides with the subspace where the conditions \eqref{eqn:introortho} hold. We collect the following properties of the flow generated by $\mathcal{L}_c$, which demonstrate the parabolic nature of $-\partial_x^3$ on the weighted spaces $L^2_w$, after projecting out the neutral modes. First, let us fix limits $c_{\mathrm{min}}, c_{\mathrm{max}}$ and a weight $w$ as follows.
\begin{itemize}
\item[\textbf{S2}]{
  \phantomsection\hypertarget{hyp:weight}{}\textit{
  The constants $c_{\mathrm{min}}, c_{\mathrm{max}},w\in \R$ satisfy \[0<c_{\mathrm{min}}<c_* < c_{\mathrm{max}}\quad \text{and} \quad 0<w<\sqrt{c_{\mathrm{min}}}/3.\]
  }}
\end{itemize}

    \begin{theorem}[\cite{pegoweinstein, mizumachi}]\label{thm:linearstability}
    Let $c\in[c_{\mathrm{min}}, c_{\mathrm{max}}]$. Then, $\mathcal{L}_{c}$ is the generator of a $C_0$-semigroup $\{e^{\mathcal{L}_ct}\}_{t\geq 0}$ on $H^s_w$ for any real $s$. For all $b>0$ which satisfy $b<w(c-w^2)$, there exists a constant $M>0$ such that, for all $g\in L_{w}^2$, $t>0$ and $k\in\{0,1\}$, we have
    \begin{align}\label{eqn:linear}
        \|\partial_x^k  e^{\mathcal{L}_{c}t}Q_c g\|_{L^2_{w}}\leq Mt^{-k/2}e^{-b t}\|g\|_{L^2_{w}},
    \end{align}
    while for all $g\in L_{w}^1$ we have
    \begin{align}\label{eqn:linearL1}
        \|\partial_x^k  e^{\mathcal{L}_{c}t}Q_c g\|_{L^2_{w}}\leq Mt^{-(2k+1)/4}e^{-b t}\|g\|_{L^1_{w}}.
    \end{align}
\end{theorem}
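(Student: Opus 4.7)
The plan is to reduce the problem to one on unweighted spaces via the conjugation $\widetilde{\mathcal{L}}_c := e^{wx}\mathcal{L}_c e^{-wx}$, which acts on $L^2(\R)$ and satisfies $\|e^{\widetilde{\mathcal{L}}_c t} g\|_{L^2} = \|e^{\mathcal{L}_c t}(e^{-wx} g)\|_{L^2_w}$. Using $\partial_x \mapsto \partial_x - w$ under conjugation, I would expand to obtain
\[\widetilde{\mathcal{L}}_c = -(\partial_x - w)^3 + c(\partial_x - w) - 2(\partial_x - w)(\phi_c \cdot).\]
The constant-coefficient part has Fourier symbol whose real part equals $-3w\xi^2 - w(c-w^2)$, so it generates a $C_0$-semigroup on $L^2$ whose spectrum lies in the half-plane $\{\operatorname{Re}\lambda \leq -w(c-w^2)\}$. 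The $-3w\xi^2$ contribution gives the semigroup a parabolic character in the high-frequency regime, and the standard Plancherel/Young argument yields the $t^{-k/2}e^{-w(c-w^2)t}$ and $t^{-(2k+1)/4}e^{-w(c-w^2)t}$ smoothing bounds for the free semigroup applied to $L^2$ and $L^1$ data, respectively.

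Next, I would treat the remaining term $-2(\partial_x - w)(\phi_c \cdot)$ as a perturbation. Because $\phi_c$ and $\phi_c'$ decay exponentially at both infinities, multiplication by $\phi_c$ is a bounded map from $L^2$ into any $L^p$ and into $L^1 \cap L^2$, and $(\partial_x - w)(\phi_c \cdot)$ is a bounded (in fact compact) operator on $L^2$. Hence $\widetilde{\mathcal{L}}_c$ generates a $C_0$-semigroup by a bounded perturbation argument. The key spectral input, coming from Pego--Weinstein, is that the perturbation creates precisely the two-dimensional generalized kernel (transported from $\mathcal{L}_c$ via conjugation) and no other spectrum in $\{\operatorname{Re}\lambda > -w(c-w^2)\}$. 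Projecting this kernel off with $Q_c$ therefore yields a semigroup whose spectral bound is strictly less than $-b$ for any $b < w(c-w^2)$.

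To upgrade from the spectral bound to the norm estimates, I would write a Duhamel/resolvent expansion of $e^{\widetilde{\mathcal{L}}_c t} Q_c$ against the free semigroup, absorbing the $\phi_c$ perturbation through a Gronwall-type bootstrap that preserves the short-time smoothing singularity $t^{-k/2}$ (resp.\ $t^{-(2k+1)/4}$) while upgrading the exponential rate to $e^{-bt}$ using the sectorial / contour-shift estimate $\int_0^t (t-s)^{-k/2} e^{-b(t-s)} s^{-k/2} e^{-bs}\,ds \lesssim t^{-k/2}e^{-bt}$. The $L^1_w \to L^2_w$ bound \eqref{eqn:linearL1} follows analogously, using the $L^1 \to L^2$ smoothing of the free semigroup with exponent $(2k+1)/4$ as the base case. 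For the $H^s_w$ claim, I would either interpolate the $L^2_w$ estimate with its analogue on a higher Sobolev scale obtained by commuting powers of $\partial_x$ through the equation (generating only lower-order terms due to the bounded coefficients of $\widetilde{\mathcal{L}}_c$ minus its principal part), or appeal directly to the fact that $\widetilde{\mathcal{L}}_c - (-\partial_x^3)$ is lower order so standard semigroup-perturbation theory on $H^s$ applies.

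The main obstacle in executing this is twofold. First, ensuring the constant $M$ is uniform in $c \in [c_{\min},c_{\max}]$: this requires continuity of the spectral projector $P_c$ in $c$ (in the appropriate operator norm on $L^2_w$), which follows from the explicit formula \eqref{eqn:spectral} together with smoothness of $c \mapsto \phi_c$ and $c \mapsto \zeta_c$, but must be tracked carefully. Second, and more delicate, is verifying that no eigenvalues of $\widetilde{\mathcal{L}}_c$ embed in the strip $-w(c-w^2) < \operatorname{Re}\lambda \leq 0$ beyond the double zero — this is the genuine spectral work done in Pego--Weinstein via a careful Evans-function / limiting-absorption analysis of the resolvent equation, which one would cite rather than redo.
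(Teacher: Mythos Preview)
The paper does not prove this statement; it is quoted as an established result from Pego--Weinstein and Mizumachi (note the citation in the theorem header) and used as a black box throughout. There is therefore no ``paper's own proof'' to compare against.

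That said, your outline is a faithful sketch of how those references actually proceed. The exponential conjugation $\widetilde{\mathcal{L}}_c = e^{wx}\mathcal{L}_c e^{-wx}$ is exactly the device Pego and Weinstein use, and your real-part computation $-3w\xi^2 - w(c-w^2)$ for the constant-coefficient symbol is correct and is precisely the mechanism that shifts the essential spectrum and supplies the parabolic smoothing factor. Treating $-2(\partial_x-w)(\phi_c\,\cdot)$ as a relatively compact perturbation, invoking the Evans-function analysis for the absence of point spectrum in the strip, and then bootstrapping via Duhamel to recover the decay with the short-time singularity intact --- all of this matches the cited works. The $L^1_w\to L^2_w$ estimate with exponent $(2k+1)/4$ is exactly Mizumachi's contribution, obtained from the $t^{-1/4}$ $L^1\to L^2$ smoothing of the conjugated free Airy-like semigroup. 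Your two flagged obstacles (uniformity of $M$ in $c$ via continuity of $P_c$, and the genuine spectral input from Pego--Weinstein) are the right places to point, and citing rather than reproving the latter is appropriate.
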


\section{Global modulation system}
\label{sec:global}
With these preliminaries in place, we are in shape to introduce the decomposition
\begin{align}u\big(t,x+\xi(t)\big)=\phi_{c(t)}(x)+v(t,x)\label{eqn:decomposition}\end{align}
characterized by the orthogonality conditions
\begin{align}\big\langle v(t,\cdot),\phi_{c(t)}\big\rangle =\big\langle v(t,\cdot),\zeta_{c(t)}\big\rangle =0.\label{eqn:ortho}\end{align}
As a consequence of \Cref{lem:wellposed}, the remainder $v$ introduced through \eqref{eqn:decomposition} has paths that are almost surely in $C(\R^+;H^1(\R))$. The decomposition \eqref{eqn:decomposition} is equivalent to that in \cite{westdorp1, westdorp2}, albeit phrased in a different frame of reference. The unique existence of decomposition \eqref{eqn:decomposition} is guaranteed as long as $\|v(t)\|_{L^2_w}$ remains sufficiently small, essentially as a result of the implicit function theorem. 

\begin{lemma}[\cite{mizumachi}]\label{lem:implicit}
    Assuming \hyperlink{hyp:weight}{S2}, there exist constants $\delta_1, C_1>0$ such for each $v_* \in H^1_w\cap H^1$ with $\|v_*\|_{L^2_w}\leq \delta_1$ and $c_*\in [c_{\mathrm{min}},c_{\mathrm{max}}]$, there exist unique parameters $c>0, \xi\in \R$ and a unique function $v\in H^1_w\cap H^1$ that together satisfy the identities
    \[\phi_{c_*}(x)+v_*(x)=\phi_{c}(x-\xi)+v(x-\xi) \quad \text{with} \quad \langle v,\phi_{c}\rangle =\langle v,\zeta_{c}\rangle=0\]
    and the bounds
    \begin{align*}\|v\|_{H^1_w}+|\xi|+|c_*-c|\leq &C_1\|v_*\|_{H^1_w},\\
    \|v\|_{H^1}\leq& C_1\big(\|v_*\|_{H^1}+\|v_*\|_{H^1_w}\big).
    \end{align*}
\end{lemma}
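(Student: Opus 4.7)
My plan is to prove this decomposition via the implicit function theorem, treating the soliton parameters $(c,\xi)$ as implicit functions of the perturbation $v_*$. Concretely, I would introduce the map
\[
\Phi(c,\xi;v_*)=\Bigl(\bigl\langle \phi_{c_*}(\cdot+\xi)+v_*(\cdot+\xi)-\phi_c,\phi_c\bigr\rangle,\ \bigl\langle \phi_{c_*}(\cdot+\xi)+v_*(\cdot+\xi)-\phi_c,\zeta_c\bigr\rangle\Bigr),
\]
which vanishes identically when the orthogonality conditions of the lemma are in force (after rewriting $v(y)=\phi_{c_*}(y+\xi)+v_*(y+\xi)-\phi_c(y)$). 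The second component requires a pairing with $\zeta_c$, so I would first remark that $\zeta_c\in L^2_{-w}$ together with $v_*\in L^2_w$ makes $\Phi_2$ well defined via the $L^2_w$--$L^2_{-w}$ duality, and justify that $\Phi$ is $C^1$ in $(c,\xi)$ on a neighborhood of $(c_*,0)$ uniformly for $v_*$ small in $L^2_w$.

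Next I would verify $\Phi(c_*,0;0)=0$ trivially, and compute the Jacobian at the base point. Using that $\phi_c$ is even and $\partial_c\phi_c$ is the $x$-derivative of $\zeta_c$, a direct calculation combined with integration by parts yields
\[
D_{(c,\xi)}\Phi(c_*,0;0)=\begin{pmatrix}-\langle\partial_c\phi_{c_*},\phi_{c_*}\rangle & 0 \\ -\langle\partial_c\phi_{c_*},\zeta_{c_*}\rangle & -\langle\phi_{c_*},\partial_c\phi_{c_*}\rangle\end{pmatrix},
\]
whose determinant equals $\langle\phi_{c_*},\partial_c\phi_{c_*}\rangle^2=\tfrac{1}{4}(\partial_c\|\phi_{c_*}\|_{L^2}^2)^2$. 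From the explicit formula $\|\phi_c\|_{L^2}^2=6c^{3/2}$ this is nonzero, so the Jacobian is invertible with bounds that are uniform for $c_*\in[c_{\mathrm{min}},c_{\mathrm{max}}]$.

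The quantitative IFT then supplies, for some $\delta_1,C_1'>0$ independent of $c_*$ in the given range, unique solutions $c=c(v_*)$, $\xi=\xi(v_*)$ of $\Phi(c,\xi;v_*)=0$ whenever $\|v_*\|_{L^2_w}\leq\delta_1$, with the Lipschitz estimate $|c-c_*|+|\xi|\leq C_1'\|v_*\|_{L^2_w}$. Defining $v(y):=\phi_{c_*}(y+\xi)+v_*(y+\xi)-\phi_c(y)$, the triangle inequality in $H^1_w$ combined with smoothness of $c\mapsto\phi_c$ and $\xi\mapsto\phi_{c_*}(\cdot+\xi)$ in $H^1_w$ (both on bounded ranges of the parameters) yields $\|v\|_{H^1_w}\lesssim\|v_*\|_{H^1_w}+|\xi|+|c-c_*|\lesssim\|v_*\|_{H^1_w}$, establishing the first bound after absorbing constants.

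For the unweighted bound, I would use translation invariance of the $H^1$-norm to get $\|v_*(\cdot+\xi)\|_{H^1}=\|v_*\|_{H^1}$, then control $\|\phi_{c_*}(\cdot+\xi)-\phi_{c_*}\|_{H^1}\lesssim|\xi|$ and $\|\phi_{c_*}-\phi_c\|_{H^1}\lesssim|c-c_*|$ by smoothness of the soliton family in $H^1$. Together with $|\xi|+|c-c_*|\lesssim\|v_*\|_{H^1_w}$ from the previous step, this delivers $\|v\|_{H^1}\leq C_1(\|v_*\|_{H^1}+\|v_*\|_{H^1_w})$. The main technical care is in setting up the $\zeta_c$-pairing against an $H^1_w$ function (ensuring well-definedness, differentiability in $(c,\xi)$, and uniform constants over $c_*\in[c_{\mathrm{min}},c_{\mathrm{max}}]$); once that is in place, the rest is a routine IFT argument whose constants can be tracked explicitly.
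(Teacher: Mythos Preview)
The paper does not supply its own proof of this lemma; it is quoted directly from \cite{mizumachi}. Your implicit function theorem argument is exactly the standard route used in that literature: set up the two orthogonality conditions as a map $\Phi(c,\xi;v_*)$, check invertibility of the Jacobian at the base point via $\langle\phi_{c_*},\partial_c\phi_{c_*}\rangle=\tfrac12\partial_c\|\phi_{c_*}\|_{L^2}^2=\tfrac{9}{2}c_*^{1/2}\neq0$, and read off the Lipschitz bounds. Your computation of the Jacobian and the handling of the $L^2_w$--$L^2_{-w}$ duality for the $\zeta_c$-pairing are correct, and the derivation of both the weighted and unweighted bounds on $v$ is sound. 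In short, your proposal is correct and matches the approach of the cited reference.
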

For convenience, we introduce a phase-shift function $\Omega(t)$ through $\xi(t)=\int_0^t c(t') \d t'+\Omega(t)$. Our goal in this section is to describe the evolution of the modulation parameters $c(t)$ and $\Omega(t)$ via SDEs of the form
\begin{align}
    \d c =& c_d^{\sigma,\epsilon}(v,c,t) \ \d t+\sigma \big\langle c_s(v,c), T_{\xi} \d W_t^Q \big\rangle, \label{eqn:postulate1} \\
    \d \Omega =& \Omega_d^{\sigma,\epsilon}(v,c,t)\ \d t+ \sigma \big\langle \Omega_s(v,c), T_{\xi} \d W_t^Q \big\rangle.\label{eqn:postulate2}
\end{align}
For $\xi\in\R$, the operator $T_{\xi}$ above denotes the translation by $\xi$, i.e. $(T_{\xi} f)(x)=f(x+\xi)$. We thus set out to find mappings $c_d^{\sigma,\epsilon},  c_s, \Omega_d^{\sigma,\epsilon}$ and $\Omega_s$ that ensure the conditions \eqref{eqn:ortho}.
Formally applying the It\^o lemma \cite[Theorem 4.32]{daprato} to \eqref{eqn:decomposition} with \eqref{eqn:postulate1} and \eqref{eqn:postulate2}, yields, after tedious computations
\begin{align}\label{eqn:vmod}
    \d v &=\big[\mathcal{L}_{c(t)}v+Y^{\sigma,\epsilon}(v,c,t)\big]\ \d t+ \sigma Z(v,c)\ T_{\xi}  \d W_t^Q
    \end{align}
where
   \begin{align*}
Y^{\sigma,\epsilon}(v,c,t)=&N(v)+\epsilon f(t)(\phi_c+v)+ \Omega_d^{\sigma,\epsilon}(v,c,t)\partial_x(\phi_c+v)-c_d^{\sigma,\epsilon}(v,c,t)\partial_c\phi_c+\sigma^2 Y_d(v,c).
\end{align*}
Here, $N(v)=-\partial_x(v^2)$ is the KdV nonlinearity, the drift contribution $Y_d$ is given by
\begin{align*}
    Y_d(v,c)=& \tfrac{1}{2} \big\|Q^{1/2}\Omega_s(v,c)\big\|_{L^2}^2 \partial_x^2v+  \tfrac{1}{2} \big\|Q^{1/2}\Omega_s(v,c)\big\|_{L^2}^2 \partial_x^2\phi_c   + \tfrac{1}{2}\big\|Q^{1/2}c_s(v,c)\big\|_{L^2}^2 \partial_c^2\phi_{c}\end{align*}
and the stochastic component is defined by
\begin{align*}
Z(v,c)[h]=&\Big(h +\big\langle\Omega_s(v,c),h\big\rangle\partial_x \Big)v   +\Big(h+ \big\langle\Omega_s(v,c),h\big\rangle \partial_x  - \big\langle c_s(v,c),h \big\rangle \partial_c  \Big)\phi_{c}.
\end{align*}
The formal adjoint of $Z$ is given by
\begin{align*}
Z^*(v,c)[g]=&(g+\phi_c)v +\Omega_s(v,c)\big\langle g,\partial_x (\phi_c+v)\big\rangle-  c_s(v,c) \langle g, \partial_c\phi_{c}\rangle.
\end{align*}
The evolution of $\big\langle v(t),\phi_{c(t)}\big\rangle$ can now formally be obtained by applying the It\^{o} product rule. However, we can not expect \eqref{eqn:vmod} to hold in the strong sense, as $v$ is not regular enough for $\mathcal{L}_{c(t)}v$ and $\partial_x^2 v$ to be well-defined. We take care of this technical issue in \Cref{app:mild} by resorting to a mild It\^{o} formula. The result is as follows.

\begin{lemma}[See \Cref{app:mild}]\label{lem:innerprod}
Assume  \hyperlink{hyp:initial}{S1} and \hyperlink{hyp:weight}{S2}. For each $t\geq0$, the inequalities
\[\|v(t')\|_{L^2_w}\leq \delta_1 \quad \text{and} \quad c(t')\in[c_{\mathrm{min}},c_{\mathrm{max}}],\quad t' \in[0,t]\] 
imply that
\begin{align}
    \d \big\langle v(t),\phi_{c(t)}\big\rangle=&\Big( c_d^{\sigma,\epsilon}(v,c,t) \langle v,\partial_c \phi_{c(t)}\rangle +\big\langle Y^{\sigma,\epsilon}(v,c,t),\phi_{c(t)}\big\rangle\Big)\ \d t\nonumber\\
    &+\sigma^2\Big(\tfrac{1}{2}\big\|Q^{1/2}c_s(v,c)\big\|_{L^2}^2 \langle v,\partial_c^2 \phi_{c(t)}\rangle+  \big\langle Q^{1/2}Z^*(v,c)[\partial_c\phi_{c(t)}],Q^{1/2}c_s(v,c) \big\rangle\Big)\ \d t \nonumber\\
    &+\sigma \langle v,\partial_c\phi_{c(t)}\rangle \big\langle c_s(v,c),T_{\xi}\d W_t^Q\big\rangle +\sigma\big\langle Z(v,c)[T_{\xi}\d W_t^Q],\phi_{c(t)}\big\rangle  \label{eqn:vphi}
\end{align}
and 
\begin{align}
    \d \big\langle v(t),\zeta_{c(t)}\big\rangle=&\Big( c_d^{\sigma,\epsilon}(v,c,t) \langle v,\partial_c \zeta_{c(t)}\rangle+\big\langle Y^{\sigma,\epsilon}(v,c,t),\zeta_{c(t)}\big\rangle \Big)\ \d t\nonumber\\
    &+\sigma^2\Big(\tfrac{1}{2}\big\|Q^{1/2}c_s(v,c)\big\|_{L^2}^2 \langle v,\partial_c^2 \zeta_{c(t)}\rangle+  \big\langle Q^{1/2}Z^*(v,c)[\partial_c\zeta_{c(t)}],Q^{1/2}c_s(v,c) \big\rangle\Big)\ \d t \nonumber\\
    &+\sigma \langle v,\partial_c\zeta_{c(t)}\rangle \big\langle c_s(v,c),T_{\xi}\d W_t^Q\big\rangle +\sigma\big\langle Z(v,c)[T_{\xi}\d W_t^Q],\zeta_{c(t)}\big\rangle \label{eqn:vzeta}
\end{align}
hold $\mathbb{P}$-almost surely.
\end{lemma}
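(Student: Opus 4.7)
The plan is to interpret $\eqref{eqn:vphi}$--$\eqref{eqn:vzeta}$ as the output of applying the It\^o product rule to $\langle v(t), \phi_{c(t)}\rangle$ and $\langle v(t), \zeta_{c(t)}\rangle$, treating $\eqref{eqn:vmod}$ at a formal level, and then to justify this formal calculation rigorously. The fundamental obstacle, which is what makes the lemma nontrivial, is that $v(t)$ has only $H^1$-paths almost surely by \Cref{lem:wellposed}, so $\mathcal{L}_{c(t)}v$ and $\partial_x^2 v$ appearing in $\eqref{eqn:vmod}$ are ill-defined as elements of $L^2$. The saving grace is that pairing against the smooth, rapidly decaying test functions $\phi_{c(t)}$ and $\zeta_{c(t)}$ transfers all spatial derivatives onto the test function by integration by parts, and this can be made rigorous with a mild It\^o formula (the role of \Cref{app:mild}).

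A preliminary step is to derive a mild integral equation for $v$ starting from the mild formulation of $u$ in \Cref{lem:wellposed}. Substituting the Ansatz $\eqref{eqn:decomposition}$ together with the postulated SDEs $\eqref{eqn:postulate1}$--$\eqref{eqn:postulate2}$ and the change of frame by $T_{-\xi(t)}$, one obtains a mild equation of the form
\[v(t)=e^{-\partial_x^3 t}v(0)+\int_0^t e^{-\partial_x^3(t-s)}F^{\sigma,\epsilon}\bigl(v(s),c(s),s\bigr)\,\d s+\sigma\int_0^t e^{-\partial_x^3(t-s)}Z\bigl(v(s),c(s)\bigr)\,T_{\xi(s)}\,\d W_s^Q,\]
where $F^{\sigma,\epsilon}$ collects the nonlinearity, the deterministic forcing, the modulation drift, and the $\sigma^2$ It\^o correction $Y_d$. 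I use the Airy semigroup here rather than $e^{\mathcal{L}_{c(t)}t}$ precisely to sidestep the time-dependence of $\mathcal{L}_{c(t)}$.

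With this in hand, I would apply the mild It\^o formula to the linear functional $v\mapsto\langle v,\phi_{c(t)}\rangle$. The contribution of the drift $\mathcal{L}_{c(t)}v$ from $\eqref{eqn:vmod}$ formally cancels: integration by parts transfers it onto $\phi_{c(t)}$, giving $\langle v,\mathcal{L}_{c(t)}^*\phi_{c(t)}\rangle=0$ since $\phi_c\in\ker\mathcal{L}_c^*$, as recalled in \Cref{sec:setup}. What remains from $\eqref{eqn:vmod}$ is the drift $\langle Y^{\sigma,\epsilon}(v,c,t),\phi_{c(t)}\rangle\,\d t$ and the stochastic integral $\sigma\langle Z(v,c)[T_\xi\,\d W_t^Q],\phi_{c(t)}\rangle$. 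The time-dependence of the test function is then incorporated via the It\^o product rule and the chain rule in $c$: Taylor-expanding $\phi_{c(t)}$ against $\eqref{eqn:postulate1}$ contributes $c_d^{\sigma,\epsilon}\langle v,\partial_c\phi_{c(t)}\rangle\,\d t$ from the drift, $\sigma\langle v,\partial_c\phi_{c(t)}\rangle\langle c_s,T_\xi\,\d W_t^Q\rangle$ from the noise, and the second-order It\^o correction $\tfrac12\sigma^2\|Q^{1/2}c_s\|_{L^2}^2\langle v,\partial_c^2\phi_{c(t)}\rangle\,\d t$. Finally, the mixed quadratic covariation between the $v$-noise and the $c$-noise, both driven by the same $Q$-Wiener process, produces $\sigma^2\langle Q^{1/2}Z^*(v,c)[\partial_c\phi_{c(t)}],Q^{1/2}c_s(v,c)\rangle\,\d t$; here the translation operator $T_\xi$ drops out because $Q$, being convolution with an even kernel, commutes with $T_\xi$. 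Summing these contributions yields $\eqref{eqn:vphi}$; the proof of $\eqref{eqn:vzeta}$ is identical after replacing $\phi_{c(t)}$ by $\zeta_{c(t)}$ throughout and using $\mathcal{L}_c^*\zeta_c=0$.

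I expect the main technical obstacle to be the rigorous justification of the mild It\^o formula when the test function $\phi_{c(t)}$ is itself stochastic in time through $c(t)$. The natural route is to smooth $v$ by a spatial mollifier, apply the classical It\^o product rule (which requires only smoothness of the test function), and pass to the limit. Uniform control for the limit is furnished by the smoothness and appropriate weighted decay of $\phi_c,\zeta_c$ and their $c$-derivatives on $[c_{\mathrm{min}},c_{\mathrm{max}}]$, the a priori bound $\|v\|_{L^2_w}\leq\delta_1$ from the hypothesis of the lemma, and the boundedness of $Q^{1/2}$ on $L^2$ granted by \hyperlink{hyp:initial}{S1}; convergence of the stochastic integrals follows from the It\^o isometry together with dominated convergence in $L^2(\Omega)$.
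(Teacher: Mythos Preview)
Your formal computation is correct and recovers exactly the terms in \eqref{eqn:vphi}--\eqref{eqn:vzeta}. However, your proposed rigorous route differs from the paper's, and the specific intermediate step you take has a regularity gap.

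You propose to first derive a mild equation for $v$ with respect to the Airy group, with drift $F^{\sigma,\epsilon}$ collecting in particular the It\^o correction $Y_d$. But $Y_d$ contains $\tfrac{1}{2}\|Q^{1/2}\Omega_s\|_{L^2}^2\,\partial_x^2 v$, and since $v$ has only $H^1$-paths this term is not in $L^2$. The Airy group is an isometry on $L^2$ and provides no smoothing, so the Bochner integral $\int_0^t e^{-\partial_x^3(t-s)}F^{\sigma,\epsilon}(s)\,\d s$ is ill-defined. This is precisely the regularity obstruction flagged in the paragraph preceding the lemma: the stochastic shift $T_{\xi(t)}$ applied to $u$ generates a second-order It\^o correction that cannot be absorbed into a mild equation for $v$ itself.

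The paper circumvents this entirely by never writing a mild equation for $v$. Instead it observes that
\[
\langle v(t),\phi_{c(t)}\rangle=\langle u(t),\phi_{c(t)}(\cdot-\xi(t))\rangle-6c^{3/2}(t)=:F\big(u(t),c(t),\xi(t)\big),
\]
and applies the mild It\^o formula of \cite{mild} directly to the scalar functional $F:H^1\times\R\times\R\to\R$, treating $(u,c,\xi)$ as a mild process for the block-diagonal group $\mathrm{diag}(e^{-\partial_x^3 t},I,I)$. Because $u$ already satisfies a clean mild equation (\Cref{lem:wellposed}) and the shift has been moved onto the smooth test function $\phi_c(\cdot-\xi)$, every spatial derivative produced by the It\^o expansion lands on $\phi_c$ rather than on $u$ or $v$; the $\partial_x^2$-problem never arises. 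The paper then converts the resulting mild expression to strong form and simplifies via the traveling-wave identity $-\partial_x^3\phi_c-2\phi_c\partial_x\phi_c+c\partial_x\phi_c=0$ (equivalently $\mathcal{L}_c^*\phi_c=0$).

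Your alternative route via mollification is in principle viable --- after smoothing, $\partial_x^2 v$ makes sense, the classical product rule applies, and integration by parts transfers the derivatives to $\phi_c$ before passing to the limit --- but it is a different argument from the paper's, and you would still need to check that the mollified $v$ satisfies a bona fide equation (which again requires handling the stochastic frame change). The paper's trick of keeping $u$ fixed and moving the modulation into the test function is the cleaner way to avoid both issues at once.
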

\begin{remark}
In \eqref{eqn:vphi} and \eqref{eqn:vzeta} above, derivatives on $v$ are interpreted in the weak sense. 
\end{remark}

Solving for solutions $c_s(v,c)$ and $ \Omega_s(v,c)$ to the system
\begin{align*}
    \begin{bmatrix}
        \langle v,\partial_c\phi_c\rangle \big\langle c_s(v,c),h \big\rangle  +\big\langle Z(v,c)[h],\phi_c\big\rangle\\
          \langle v,\partial_c\zeta_c\rangle \big\langle c_s(v,c),h \big\rangle  +\big\langle Z(v,c)[h],\zeta_c\big\rangle
    \end{bmatrix}=\begin{bmatrix}
        0\\0
    \end{bmatrix}, \quad \forall h\in L^2_Q,
\end{align*}
reveals that
\begin{align*}
K_c(v)\begin{bmatrix}
        \langle c_s(v,c),h\rangle\\
        \langle \Omega_s(v,c),h\rangle 
    \end{bmatrix}
    =- \begin{bmatrix}
        \big\langle h(\phi_c+v),\phi_c\big\rangle\\\big\langle h(\phi_c+v),\zeta_c\big\rangle
    \end{bmatrix}
    \end{align*}
  for all $h\in L^2_Q$, where $K_c(v)$ is the matrix
  \begin{align*}
      K_c(v)=\begin{bmatrix}
        \langle -\phi_c+v,\partial_c\phi_c\rangle  & \langle \partial_x v,\phi_c\rangle\\
        \langle v,\partial_c\zeta_c\rangle-\langle \partial_c  \phi_{c},\zeta_c\rangle & \big\langle\partial_x (\phi_c+v),\zeta_c\big\rangle 
    \end{bmatrix}.
  \end{align*}
This implies that the mappings  $c_s(v,c)$ and $ \Omega_s(v,c)$ are given by
    \begin{align*}
    \begin{bmatrix}
        c_s(v,c)\\
        \Omega_s(v,c)
    \end{bmatrix}
    =-K_c^{-1}(v)\begin{bmatrix}
        (\phi_c+v)\phi_c \\(\phi_c+v)\zeta_c
    \end{bmatrix}.
    \end{align*}
We solve for $c_d^{\sigma,\epsilon}$ and $\Omega_d^{\sigma,\epsilon}$ by decomposing 
\begin{align*}
    c_d^{\sigma,\epsilon}(v,c,t)=&c^0_d(v,c)+\epsilon f(t)c_f(v,c)+\sigma^2 c_d(v,c), \\
    \Omega_d^{\sigma,\epsilon}(v,c,t)=&\Omega^0_d(v,c)+\epsilon f(t)\Omega_f(v,c)+\sigma^2 \Omega_d(v,c),
\end{align*}
and isolating the $\sigma^2$ and $\epsilon$ dependent terms in \eqref{eqn:vphi} and \eqref{eqn:vzeta}. This yields
\begin{align*}
 \begin{bmatrix}
        c_d^0(v,c)\\
        \Omega_d^0(v,c)
    \end{bmatrix}=&-K_c^{-1}(v)\begin{bmatrix}
        \big\langle N(v),\phi_c\big\rangle\\
        \big\langle N(v),\zeta_c \big\rangle
    \end{bmatrix},\\
    \begin{bmatrix}
        c_f(v,c)\\
        \Omega_f(v,c)
    \end{bmatrix}=&-K_c^{-1}(v)\begin{bmatrix}
        \langle \phi_c+v,\phi_c\rangle\\
        \langle \phi_c+v,\zeta_c \rangle
    \end{bmatrix},
    \end{align*}
together with
    \begin{align*}
    \begin{bmatrix}
        c_d(v,c)\\
        \Omega_d(v,c)
    \end{bmatrix}=&-K_c^{-1}(v)\begin{bmatrix}
        \big\langle Y_d(v,c),\phi_c\big\rangle\\
        \big\langle Y_d(v,c),\zeta_c \big\rangle
    \end{bmatrix}-\tfrac{1}{2}\big\|Q^{1/2}c_s(v,c)\big\|_{L^2}^2K_c^{-1}(v)\begin{bmatrix}
         \langle v,\partial_c^2 \phi_c\rangle\\
         \langle v,\partial_c^2 \zeta_c\rangle
    \end{bmatrix}\\
    &-K_c^{-1}(v)\begin{bmatrix}
          \big\langle Q^{1/2}Z^*(v,c)[\partial_c\phi_c],Q^{1/2}c_s(v,c) \big\rangle\\
           \big\langle Q^{1/2}Z^*(v,c)[\partial_c\zeta_c],Q^{1/2}c_s(v,c) \big\rangle
    \end{bmatrix}.
\end{align*}

We remark that, in the absence of deterministic forcing ($\epsilon=0$), the modulation system derived above is equivalent to the system found in \cite[Section 2.3.2]{westdorp1}, and in the absence of stochastic forcing ($\sigma=0$) to that in \cite[Section 2]{westdorp2}. Evaluating the modulation system at $v=0$
gives rise to the approximation $c_{\mathrm{ap}}$ defined in \eqref{eqn:cap}, with $g_Q(c)=c_d(0,c)$. Now that the modulation system has taken concrete form, we can establish control on the modulation parameters, which will be important for our stability arguments later on.
\begin{lemma}\label{lem:modcontrol}
 Assuming \hyperlink{hyp:weight}{S2}, there exist constants $\delta_2, C_2>0$ such that for all $\tilde{v}\in L^2_w$ that satisfy $\|\tilde{v}\|_{L^2_w}\leq \delta_2$ and each $ \tilde{c}\in [c_{\mathrm{min}},c_{\mathrm{max}}]$
we have
\begin{align}
   \big\|Q^{1/2}c_s(\tilde{v},\tilde{c})-Q^{1/2}c_s(0,\tilde{c})\big\|_{L^2}+\big\|Q^{1/2}\Omega_s(\tilde{v},\tilde{c})-Q^{1/2}\Omega_s(0,\tilde{c})\big\|_{L^2}\leq & C_2\|\tilde{v}\|_{L^2_w},\label{eqn:cscontrol}\\
   \big|c_d^0(\tilde{v},\tilde{c})\big|+\big|\Omega_d^0(\tilde{v},\tilde{c})\big|\leq & C_2 \|\tilde{v}\|_{L^2_w}^2,\label{eqn:cd0control}\\
\big|c_d(\tilde{v},\tilde{c})-\tilde{c}_d(0,\tilde{c})\big|+\big|\Omega_d(\tilde{v},\tilde{c})-\Omega_d(0,\tilde{c})\big|\leq & C_2\big(1+\|\tilde{v}\|_{L^2_w}^2\big)\|\tilde{v}\|_{L^2_w},\label{eqn:cdcontrol}\\
\big|c_f(\tilde{v},\tilde{c})-c_f(0,\tilde{c})\big|+\big|\Omega_f(\tilde{v},\tilde{c})-\Omega_f(0,\tilde{c})\big|\leq & C_2\|\tilde{v}\|_{L^2_w}.\label{eqn:c?control}
\end{align}
\end{lemma}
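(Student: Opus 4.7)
The plan is to systematically reduce all four estimates to two ingredients: (i) uniform Lipschitz control of the inverse matrix $v \mapsto K_{\tilde{c}}^{-1}(v)$ and (ii) duality estimates between $L^2_w$ and $L^2_{-w}$ against the smooth, exponentially decaying functions $\phi_{\tilde{c}}, \partial_c\phi_{\tilde{c}}, \partial_x\phi_{\tilde{c}}, \zeta_{\tilde{c}}, \partial_c\zeta_{\tilde{c}}$. First I would verify that $K_{\tilde{c}}(0)$ is uniformly invertible for $\tilde{c}\in[c_{\mathrm{min}},c_{\mathrm{max}}]$: integration by parts gives $\langle \partial_x \phi_{\tilde{c}},\zeta_{\tilde{c}}\rangle = -\langle \phi_{\tilde{c}},\partial_c\phi_{\tilde{c}}\rangle$ (using that $\phi_{\tilde{c}}$ decays at $\pm\infty$), hence $\det K_{\tilde{c}}(0) = \langle \phi_{\tilde{c}},\partial_c\phi_{\tilde{c}}\rangle^2 = (\tfrac{1}{2}\partial_c\|\phi_{\tilde{c}}\|_{L^2}^2)^2$, which is bounded below since $\|\phi_c\|_{L^2}^2 \propto c^{3/2}$ is strictly monotonic. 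Next, each entry of $K_{\tilde{c}}(\tilde{v})-K_{\tilde{c}}(0)$ is of the form $\langle \tilde{v},g\rangle$ or $\langle \partial_x\tilde{v},g\rangle$ with $g\in\{\partial_c\phi_{\tilde{c}},\phi_{\tilde{c}},\partial_c\zeta_{\tilde{c}},\zeta_{\tilde{c}}\}$; after integrating by parts to move derivatives off $\tilde{v}$, Cauchy-Schwarz in the $L^2_w/L^2_{-w}$ pairing gives a bound $C\|\tilde{v}\|_{L^2_w}$, using that under \hyperlink{hyp:weight}{S2} the chosen weight satisfies $w<\sqrt{c_{\mathrm{min}}}/3<\sqrt{\tilde{c}}$ so that all relevant targets lie uniformly in $L^2_{-w}$. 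A Neumann-series argument then yields $\|K_{\tilde{c}}^{-1}(\tilde{v})\|\leq C$ and $\|K_{\tilde{c}}^{-1}(\tilde{v})-K_{\tilde{c}}^{-1}(0)\|\leq C\|\tilde{v}\|_{L^2_w}$ for $\|\tilde{v}\|_{L^2_w}$ smaller than some $\delta_2>0$.

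With this in hand the ``easier'' bounds follow by direct substitution. For \eqref{eqn:cscontrol} the explicit formula gives $c_s(\tilde{v},\tilde{c})$ as a linear combination of $(\phi_{\tilde{c}}+\tilde{v})\phi_{\tilde{c}}$ and $(\phi_{\tilde{c}}+\tilde{v})\zeta_{\tilde{c}}$ with coefficients drawn from $K_{\tilde{c}}^{-1}(\tilde{v})$; subtracting the $\tilde{v}=0$ expression splits the difference into one term in which the coefficient difference $K_{\tilde{c}}^{-1}(\tilde{v})-K_{\tilde{c}}^{-1}(0)$ multiplies $L^2$-bounded functions, and one in which the coefficients $K_{\tilde{c}}^{-1}(\tilde{v})$ multiply $\tilde{v}\phi_{\tilde{c}}$ or $\tilde{v}\zeta_{\tilde{c}}$. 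The latter two are controlled in $L^2$ by $C\|\tilde{v}\|_{L^2_w}$, since $e^{-wx}\phi_{\tilde{c}}$ and $e^{-wx}\zeta_{\tilde{c}}$ are uniformly bounded in $x$; the $L^2$-boundedness of $Q^{1/2}$ then gives \eqref{eqn:cscontrol} (and the analogous bound for $\Omega_s$). For \eqref{eqn:cd0control} I integrate by parts once to write $\langle N(\tilde{v}),\phi_{\tilde{c}}\rangle=\langle \tilde{v}^2,\partial_x\phi_{\tilde{c}}\rangle$, which is bounded by $\|\tilde{v}\|_{L^2_w}^2\|e^{-2wx}\partial_x\phi_{\tilde{c}}\|_{L^\infty}\leq C\|\tilde{v}\|_{L^2_w}^2$; the same trick handles the pairing with $\zeta_{\tilde{c}}$ via $\partial_x\zeta_{\tilde{c}}=\partial_c\phi_{\tilde{c}}$, and the uniform bound on $K_{\tilde{c}}^{-1}(\tilde{v})$ finishes it. For \eqref{eqn:c?control}, the same weighted Cauchy-Schwarz gives $|\langle \tilde{v},\phi_{\tilde{c}}\rangle|+|\langle \tilde{v},\zeta_{\tilde{c}}\rangle|\leq C\|\tilde{v}\|_{L^2_w}$, which combined with the Lipschitz control on $K_{\tilde{c}}^{-1}$ proves the estimate.

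The main obstacle is \eqref{eqn:cdcontrol}, whose right-hand side has the form $(1+\|\tilde{v}\|_{L^2_w}^2)\|\tilde{v}\|_{L^2_w}$ and which aggregates three structurally different contributions: a pairing against $Y_d(\tilde{v},\tilde{c})$, a term of the form $\|Q^{1/2}c_s(\tilde{v},\tilde{c})\|_{L^2}^2 K_{\tilde{c}}^{-1}(\tilde{v})[\langle \tilde{v},\partial_c^2\phi_{\tilde{c}}\rangle;\langle \tilde{v},\partial_c^2\zeta_{\tilde{c}}\rangle]$, and a contraction against $Q^{1/2}Z^*(\tilde{v},\tilde{c})[\partial_c\phi_{\tilde{c}}]$ and $Q^{1/2}Z^*(\tilde{v},\tilde{c})[\partial_c\zeta_{\tilde{c}}]$. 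For each I decompose the difference with $\tilde{v}=0$ into at most trilinear pieces and handle each piece separately, using \eqref{eqn:cscontrol} together with a parallel Lipschitz estimate for $Z^*(\tilde{v},\tilde{c})$ in $\tilde{v}$ (immediate from its explicit affine dependence on $\tilde{v}$). The intermediate term is the one responsible for the $(1+\|\tilde{v}\|_{L^2_w}^2)$ factor: its $\tilde{v}$-difference yields one linear factor from the inner product against $\tilde{v}$, multiplied by $\|Q^{1/2}c_s(\tilde{v},\tilde{c})\|_{L^2}^2\leq(\|Q^{1/2}c_s(0,\tilde{c})\|_{L^2}+C\|\tilde{v}\|_{L^2_w})^2\leq C(1+\|\tilde{v}\|_{L^2_w}^2)$. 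Throughout, uniformity in $\tilde{c}\in[c_{\mathrm{min}},c_{\mathrm{max}}]$ is inherited from the continuous dependence of $\phi_{\tilde{c}},\partial_c^j\phi_{\tilde{c}},\partial_c^j\zeta_{\tilde{c}}$ on $\tilde{c}$ in the weighted norms used above, so that all constants can be absorbed into a single $C_2$.
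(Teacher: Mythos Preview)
Your proposal is correct and follows essentially the same route as the paper: uniform invertibility of $K_{\tilde c}(0)$ plus a perturbation argument for $K_{\tilde c}^{-1}(\tilde v)$, weighted duality estimates for the various pairings, and then a term-by-term decomposition of each difference. The only cosmetic deviations are that the paper computes $K_{\tilde c}(0)$ explicitly rather than via its determinant, and that $Z^*(\tilde v,\tilde c)$ is not literally affine in $\tilde v$ (since $c_s,\Omega_s$ depend on $\tilde v$) --- its Lipschitz bound follows instead from the control on $c_s,\Omega_s$ you have already established, and contributes, together with the $Y_d$ pairing, to the $(1+\|\tilde v\|_{L^2_w}^2)$ factor.
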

\begin{proof}
    Setting out to control $K_{\tilde{c}}^{-1}(\tilde{v})$, we note that
\[K_{\tilde{c}}(0)=-\frac{9}{2}\begin{bmatrix}
{\tilde{c}}^{1/2}& 0\\
{\tilde{c}}^{-1} &{\tilde{c}}^{1/2}
\end{bmatrix}\]
is invertible, so that we can find constants $\tilde{C}_1,\tilde{C}_2>0$ that ensure $\|A^{-1}\|_{\mathrm{op}}\leq \tilde{C}_2$ for all $A\in \R^{2\times 2}$ which satisfy
\[\big|A_{ij}-[K_{{\tilde{c}}}(0)]_{ij}\big|\leq \tilde{C}_1, \quad  (i,j) \in \{1,2\}^2.\]
Here, $\|\cdot\|_{\mathrm{op}}$ denotes the operator-norm on $\big(\R^2, \|\cdot\|_1\big)$, chosen for convenience in the computations below. Now note that 
\begin{align*}
    \big|[K_{{\tilde{c}}}(\tilde{v})]_{ij}-[K_{{\tilde{c}}}(0)]_{ij}\big|\leq & \Big(\big|\langle \tilde{v},\partial_c\phi_{\tilde{c}}\rangle\big|+\big|\langle \tilde{v},\partial_x\phi_{\tilde{c}}\rangle\big|+\big|\langle \tilde{v},\partial_c\zeta_{\tilde{c}}\rangle \big|\Big)\\
    \leq & \big(\|\partial_c\phi_{\tilde{c}}\|_{L^2_{-w}}+\|\partial_x\phi_{\tilde{c}}\|_{L^2_{-w}}+\|\partial_c\zeta_{\tilde{c}}\|_{L^2_{-w}}\big)\|\tilde{v}\|_{L^2_w}
\end{align*}
for all $(i,j)\in\{1,2\}^2$. Ensuring $\delta_2\leq \big(\|\partial_c\phi_{\tilde{c}}\|_{L^2_{-w}}+\|\partial_x\phi_{\tilde{c}}\|_{L^2_{-w}}+\|\partial_c\zeta_{\tilde{c}}\|_{L^2_{-w}}\big)^{-1}\tilde{C}_1$, it follows that $\big\|K_{\tilde{c}}^{-1}(\tilde{v})\big\|_{\mathrm{op}}\leq \tilde{C}_2$. Estimating 
\begin{align*}
     \Big|\big\langle N(\tilde{v}),\phi_{{\tilde{c}}}\big\rangle\Big|+\Big|\big\langle N(\tilde{v}),\zeta_{{\tilde{c}}}\big\rangle\Big|=&\Big|\big\langle \partial_x(\tilde{v}^2),\phi_{{\tilde{c}}}\big\rangle\Big|+\Big|\big\langle \partial_x(\tilde{v}^2),\zeta_{{\tilde{c}}}\big\rangle\Big|\\
     =&\big|\langle \tilde{v}^2,\partial_x\phi_{{\tilde{c}}}\rangle\big|+\big|\langle \tilde{v}^2,\partial_c\phi_{{\tilde{c}}}\rangle\big|\\
     \leq&\tilde{C}_5\|\tilde{v}\|^2_{L^2_{w}}
\end{align*}
where
\[\tilde{C}_5= \sup_{x\leq 0}\Big(|e^{-2w x}\partial_x\phi_{{\tilde{c}}}(x)|+|e^{-2w x}\partial_c\phi_{{\tilde{c}}}(x)|\Big)\]
establishes \eqref{eqn:cd0control}. Writing
\begin{align*}
    \begin{bmatrix}
        c_f(\tilde{v},{\tilde{c}})-c_f(0,\tilde{c})\\
        \Omega_f(\tilde{v},{\tilde{c}})-\Omega_f(0,\tilde{c})
    \end{bmatrix}=-K_{\tilde{c}}^{-1}(\tilde{v})\begin{bmatrix}
        \langle \tilde{v},\phi_{\tilde{c}}\rangle\\
        \langle \tilde{v},\zeta_{\tilde{c}} \rangle
    \end{bmatrix}+\big(K_{\tilde{c}}(0)^{-1}-K_{\tilde{c}}^{-1}(\tilde{v})\big)\begin{bmatrix}
        \langle \phi_{\tilde{c}},\phi_{\tilde{c}}\rangle\\
        \langle \phi_{\tilde{c}},\zeta_{\tilde{c}} \rangle
    \end{bmatrix}
\end{align*}
establishes \eqref{eqn:c?control}. Writing
\begin{align*}
    \begin{bmatrix}
        Q^{1/2}c_s(\tilde{v},{\tilde{c}})-Q^{1/2}c_s(0,{\tilde{c}})\\
        Q^{1/2}\Omega_s(\tilde{v},{\tilde{c}})-Q^{1/2}\Omega_s(0,{\tilde{c}})
    \end{bmatrix}
    =-K_{\tilde{c}}^{-1}(\tilde{v})\begin{bmatrix}
        Q^{1/2}(\tilde{v}\phi_{\tilde{c}}) \\Q^{1/2}(\tilde{v}\zeta_{\tilde{c}})
    \end{bmatrix}+\big(K_{\tilde{c}}^{-1}(0)-K_{\tilde{c}}^{-1}(\tilde{v})\big)\begin{bmatrix}
        Q^{1/2}(\phi^2_{\tilde{c}}) \\Q^{1/2}(\phi_{\tilde{c}}\zeta_{\tilde{c}})
    \end{bmatrix}
    \end{align*}
    together with     \[\|Q^{1/2}g\|_{L^2}=\|\sqrt{\hat{q}}\hat{g}\|_{L^2}\leq \|\hat{q}\|^{1/2}_\infty\|\hat{g}\|_{L^2}\leq \|q\|^{1/2}_{L^1}\|g\|_{L^2}, \quad g\in L^2\]
    and 
    \[\|\tilde{v}\phi_{\tilde{c}}\|_{L^2}+\|\tilde{v}\zeta_{\tilde{c}}\|_{L^2}\leq \big(\|\phi_{\tilde{c}}\|_{L^2_{-w}}+\|\zeta_{\tilde{c}}\|_{L^2_{-w}}\big)\|\tilde{v}\|_{L^2_w}\]
    establishes \eqref{eqn:cscontrol}. Lastly, \eqref{eqn:cdcontrol} follows in the same way, using
    \begin{align*}
        \Big|\big\langle Y_d(\tilde{v},{\tilde{c}})-Y_d(0,{\tilde{c}}),\phi_{\tilde{c}}\big\rangle\Big|+&\Big|\big\langle Y_d(\tilde{v},{\tilde{c}})-Y_d(0,{\tilde{c}}),\zeta_{\tilde{c}}\big\rangle\Big|\\
        \leq &\tfrac{1}{2}\Big| \big\|Q^{1/2}\Omega_s(\tilde{v},{\tilde{c}})\big\|_{L^2}^2-\big\|Q^{1/2}\Omega_s(0,{\tilde{c}})\big\|_{L^2}^2\Big| \Big( \big|\langle \partial_x^2\phi_{\tilde{c}},\phi_{\tilde{c}}\rangle \big|+\big|\langle \partial_x^2\phi_{\tilde{c}}, \zeta_{\tilde{c}}\rangle \big|\Big)\\
        &\quad +\tfrac{1}{2}\Big| \big\|Q^{1/2}c_s(\tilde{v},{\tilde{c}})\big\|_{L^2}^2-\big\|Q^{1/2}c_s(0,{\tilde{c}})\big\|_{L^2}^2\Big| \Big( \big|\langle \partial_c^2\phi_{\tilde{c}},\phi_{\tilde{c}}\rangle \big|+\big|\langle \partial_c^2\phi_{\tilde{c}}, \zeta_{\tilde{c}}\rangle \big|\Big)\\
        &\quad +\tfrac{1}{2} \big\|Q^{1/2}\Omega_s(\tilde{v},{\tilde{c}})\big\|_{L^2}^2\Big(\big|\langle \tilde{v},\partial_x^2\phi_{\tilde{c}}\rangle\big|+\big|\langle \tilde{v},\partial_x^2\zeta_{\tilde{c}}\rangle \big|\Big)\\
        \leq &\tilde{C}_6 \big(1+\|\tilde{v}\|_{L^2_w}+\|\tilde{v}\|_{L^2_w}^2\big)\|\tilde{v}\|_{L^2_w}
    \end{align*}
    and 
    \begin{align*}\Big\|\big(Z^*(\tilde{v},{\tilde{c}})-Z^*(0,{\tilde{c}})\big)[g]\Big\|_{L^2}\leq & \big\|(g+\phi_{\tilde{c}})\tilde{v}\big\|_{L^2} +\big\|\Omega_s(\tilde{v},{\tilde{c}})-\Omega_s(0,{\tilde{c}})\big\|_{L^2}\big|\langle \partial_x g, \phi_{\tilde{c}}+\tilde{v}\rangle\big|\\
    &+\big\|\Omega_s(\tilde{v},{\tilde{c}})\big\|_{L^2}\big|\langle \partial_x g, \tilde{v}\rangle\big|+  \big\|c_s(\tilde{v},{\tilde{c}})-c_s(0,{\tilde{c}})\big\|_{L^2} \big|\langle g, \partial_c\phi_{{\tilde{c}}}\rangle\big|\\
    \leq & \tilde{C}_7\big(1+\|\tilde{v}\|_{L^2_w}\big)\|\tilde{v}\|_{L^2_w}.\end{align*}
\end{proof}

\section{Local modulation system}
\label{sec:local}
Now that we have set up the modulation system $\big(v(t),c(t),\xi(t)\big)$, we turn our attention to the main goal of this paper: asserting that the remainder $v(t)$ defined through 
\[v(t,x)=u\big(t,x+\xi(t)\big)-\phi_{c(t)}(x)\]
remains small (measured in the norm $H^1_w)$. However, we do not base our arguments on the operator $\mathcal{L}_{c(t)}$ that represents the linear part of \eqref{eqn:vmod}. The main reason is that $\mathcal{L}_{c(t)}$ is non-autonomous, which complicates stability arguments based on the stability properties of the flow generated by $\mathcal{L}_{c}$. A second reason is that $v$ is defined through a stochastic shift of $u$, and hence \eqref{eqn:vmod} contains the term $\partial_x^2v$, which presents regularity issues. 

Given $T\geq0$, we therefore introduce a \textit{local} modulation system $\mathbf{m}^T:=\big(v^T,c^T,\xi^T\big)$ through
\begin{align}v^T(s,x)=&u\big(T+s,x+\xi(T)+c(T)s\big)-\Phi^T(\mathbf{m}^T(s),s,x),\label{eqn:local}\end{align}
where we have abbreviated 
\[\Phi^T(\mathbf{m}^T(s),s,x):=\phi_{c^T(s)}\big(x+\xi(T)+c(T)s-\xi^T(s)\big).\]
The local remainder $v^T(s)$ is defined by shifting $u$ with constant velocity $c(T)$. This freezes the wave around the origin in the absence of forcing, while avoiding an It\^o correction term of the form $\partial_x^2$. The parameter
$\xi^T(s)$ then has the interpretation as soliton position, and accounts for corrections due to the forcing. The local modulation parameters $c^T(s)$ and $\xi^T(s)$ are uniquely determined through the condition
\begin{align}\big\langle v^T(s),\phi_{c(T)}\big\rangle =\big\langle v^T(s),\zeta_{c(T)}\big\rangle =0.\label{eqn:ortholocal}\end{align}
In particular, we have  
\[\big(v^T(0),c^T(0),\xi^T(0)\big)=\big(v(T),c(T),\xi(T)\big).\]
In the absence of noise and forcing, with $v(T)=0$, the local modulation parameters keep their constant value $c^T(s)=c(T)$ and $\xi^T(s)=\xi(T)$. The unique existence of the decomposition \eqref{eqn:local} is guaranteed by the following lemma, a variation on \Cref{lem:implicit} tailored to the condition \eqref{eqn:ortholocal}.
\begin{lemma}\label{lem:implicitlocal}
    Assuming \hyperlink{hyp:weight}{S2}, there exists a constant $\delta_3>0$ so that for each $v_* \in H^1_w\cap H^1$ and $c_*,c_0\in[c_{\mathrm{min}},c_{\mathrm{max}}]$ with $\|v_*\|_{L^2_w},|c_*-c_0|\leq \delta_3$, there exist unique parameters $c>0, \xi\in \R$ and a unique function $v\in H^1_w\cap H^1$ that together enforce the identities
    \[\phi_{c_*}(x)+v_*(x)=\phi_{c}(x-\xi)+v(x-\xi) \quad \text{with} \quad \langle v,\phi_{c_0}\rangle =\langle v,\zeta_{c_0}\rangle=0.\]
\end{lemma}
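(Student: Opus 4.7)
The plan is to apply the implicit function theorem, treating $(c,\xi)\in\R^+\times\R$ as unknowns and $(c_*,v_*,c_0)$ as data. Substituting $y=x-\xi$ into the decomposition identity reduces it to $v(y)=\phi_{c_*}(y+\xi)+v_*(y+\xi)-\phi_c(y)$; defining
\[F(c,\xi;c_*,v_*,c_0)=\bigl(\langle v,\phi_{c_0}\rangle,\langle v,\zeta_{c_0}\rangle\bigr),\]
zeros of $F$ produce the desired decomposition together with the orthogonality conditions. The pairings are well-defined via $L^2_w$--$L^2_{-w}$ duality, since both $\phi_{c_0}$ and $\zeta_{c_0}$ lie in $L^2_{-w}$ by the exponential decay of $\phi_c$ and $\partial_c\phi_c$ recorded in \cite{pegoweinstein}. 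At the trivial base point $c_*=c_0$, $v_*=0$, $c=c_0$, $\xi=0$, the remainder $v$ vanishes identically and so does $F$.

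The Jacobian $\partial_{(c,\xi)}F$ at this base point is computed using $\partial_c v=-\partial_c\phi_{c_0}$ and $\partial_\xi v=\partial_x\phi_{c_0}$, together with the symmetry $\langle \partial_x\phi_{c_0},\phi_{c_0}\rangle=0$ and the identity $\partial_x\zeta_{c_0}=\partial_c\phi_{c_0}$. An integration by parts, justified because $\phi_{c_0}$ decays exponentially while $\zeta_{c_0}$ remains bounded at $+\infty$ and vanishes at $-\infty$, yields
\[J_{c_0}=\begin{pmatrix} -\langle \partial_c\phi_{c_0},\phi_{c_0}\rangle & 0 \\ -\langle \partial_c\phi_{c_0},\zeta_{c_0}\rangle & -\langle \phi_{c_0},\partial_c\phi_{c_0}\rangle\end{pmatrix}.\]
Its determinant equals $\bigl(\tfrac{1}{2}\partial_c\|\phi_c\|^2_{L^2}\big|_{c_0}\bigr)^2$, which the scaling $\|\phi_c\|^2_{L^2}\propto c^{3/2}$ bounds away from zero uniformly for $c_0\in[c_{\mathrm{min}},c_{\mathrm{max}}]$.

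It then remains to verify that $F$ is $C^1$ jointly in its arguments, with derivatives depending continuously on the data, so that a quantitative implicit function theorem produces a radius $\delta_3$ uniform in $(c_0,c_*)$. The map $c\mapsto\phi_c$ is smooth into $H^1_w$ by the explicit formula \eqref{eqn:solitons}, and the translation $(\xi,g)\mapsto g(\cdot+\xi)$ is $C^1$ on $\R\times H^1_w$ with the exact scaling $\|g(\cdot+\xi)\|_{L^2_w}=e^{-w\xi}\|g\|_{L^2_w}$. Together with boundedness of the pairings this yields a $C^1$ map $F$. Covering the compact parameter set by IFT neighborhoods and combining with the uniform lower bound on $|\det J_{c_0}|$ extracts a single $\delta_3>0$ with the stated properties; the resulting $v=\phi_{c_*}(\cdot+\xi)+v_*(\cdot+\xi)-\phi_c$ inherits $H^1_w\cap H^1$ regularity from $v_*$ and $\phi_c$. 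The main subtlety is keeping the IFT radius uniform over the full range of $c_0,c_*$, which rests on the compactness of $[c_{\mathrm{min}},c_{\mathrm{max}}]$ combined with the uniform spectral bounds and continuous dependence above.
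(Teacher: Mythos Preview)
Your proposal is correct and follows the standard implicit function theorem argument that underlies such modulation decompositions. The paper does not supply an explicit proof of this lemma; it is stated as a direct variation of \Cref{lem:implicit}, which is in turn cited from \cite{mizumachi}, and the underlying mechanism there is precisely the IFT computation you carry out: the invertibility of the Jacobian reduces to $\langle\partial_c\phi_{c_0},\phi_{c_0}\rangle=\tfrac{9}{2}c_0^{1/2}\neq 0$, and compactness of $[c_{\mathrm{min}},c_{\mathrm{max}}]$ furnishes the uniform $\delta_3$.
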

Existence of the local decomposition is thus guaranteed as long as the remainder $v^T(s)$ and the difference $|c(T)-c^T(s)|$ remain under $\delta_3$. The advantage of defining the local system through the conditions \eqref{eqn:ortholocal} is that it demands orthogonality with respect to \textit{fixed} eigenfunctions $\phi_{c(T)}$ and $\zeta_{c(T)}$, facilitating the use of the stability properties of $\{e^{\mathcal{L}_{c(T)}t}\}_{t\geq 0}$ to control the growth of $v^T(s)$. 
Rewriting \eqref{eqn:local} as 
\begin{align*}
v^T(s,x)
=&v\big(T+s,x+\xi(T)+c(T)s-\xi(T+s)\big)\\
&+\phi_{c(T+s)}\big(x+\xi(T)+c(T)s-\xi(T+s)\big)-\Phi^T(\mathbf{m}^T(s),s,x),
\end{align*}
we observe that as long as the local modulation parameters $c^T(s)$ and $\xi^T(s)$ do not deviate substantially from their global counterparts $c(T+s)$ and $\xi(T+s)$, then neither do $v^T(s)$ and $v(T+s)$. As long as this holds, we can understand the growth of $v(T+s)$ through $v^T(s)$. The following lemma asserts this correspondence.

\begin{lemma}\label{lem:correspondence}
Assuming \hyperlink{hyp:initial}{S1} and \hyperlink{hyp:weight}{S2}, there exists a constant $C_4>0$ such that the following holds true. For all $T,s,\delta\geq 0$, the inclusions
\[c^T(s),c(T+s)\in[\tfrac{1}{2}c_{\mathrm{min}},  2c_{\mathrm{max}}]\]
and the bound
\[\big|c^T(s)-c(T+s)\big|+\big|\xi^T(s)-\xi(T+s)\big|\leq \delta,\]
imply
\[\Big|\big\|v(T+s)\big\|_{H^1_w}-e^{w(\xi(T)+c(T)s-\xi(T+s))}\big\|v^T(s)\big\|_{H^1_w}\Big|\leq C_4\delta.\]
\end{lemma}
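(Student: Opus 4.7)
I would begin with a direct algebraic manipulation of the two decompositions. Writing the global identity as $u(T+s,y+\xi(T+s))=\phi_{c(T+s)}(y)+v(T+s,y)$ and substituting $y=x+\alpha$ with $\alpha:=\xi(T)+c(T)s-\xi(T+s)$ and $\beta:=\xi(T+s)-\xi^T(s)$ into the definition \eqref{eqn:local}, one obtains the pointwise identity
\[
v^T(s,x) \;=\; v(T+s,\,x+\alpha)\;+\;\phi_{c(T+s)}(x+\alpha)\;-\;\phi_{c^T(s)}(x+\alpha+\beta).
\]
This is the algebraic backbone of the proof.

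Next I would exploit the translation scaling of the weighted norm: for any $f\in H^1_w$ and $a\in\R$,
\[
\|f(\cdot+a)\|_{H^1_w} \;=\; e^{-wa}\,\|f\|_{H^1_w}.
\]
Applying this to the $v(T+s,\cdot+\alpha)$ term and using the reverse triangle inequality yields
\[
\bigl|\|v(T+s)\|_{H^1_w}-e^{w\alpha}\|v^T(s)\|_{H^1_w}\bigr|\;\leq\;e^{w\alpha}\,\bigl\|\phi_{c(T+s)}(\cdot+\alpha)-\phi_{c^T(s)}(\cdot+\alpha+\beta)\bigr\|_{H^1_w}.
\]
A second application of the scaling on the right hand side absorbs the prefactor $e^{w\alpha}$, reducing matters to showing that
\[
\bigl\|\phi_{c(T+s)}-\phi_{c^T(s)}(\cdot+\beta)\bigr\|_{H^1_w}\;\leq\; C_4\,\delta.
\]

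For this, I would split the difference into an amplitude part and a translation part,
\[
\phi_{c(T+s)}-\phi_{c^T(s)}(\cdot+\beta)\;=\;\bigl[\phi_{c(T+s)}-\phi_{c^T(s)}\bigr]\;+\;\bigl[\phi_{c^T(s)}-\phi_{c^T(s)}(\cdot+\beta)\bigr],
\]
and treat each by a mean value representation. The first is handled by $\|\phi_{c(T+s)}-\phi_{c^T(s)}\|_{H^1_w}\leq \sup_{c\in[\tfrac12 c_{\mathrm{min}},2c_{\mathrm{max}}]}\|\partial_c\phi_c\|_{H^1_w}\,|c(T+s)-c^T(s)|$, which is finite by the explicit formula \eqref{eqn:solitons} and the amplitude inclusion. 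For the second, I write $\phi_{c^T(s)}(x)-\phi_{c^T(s)}(x+\beta)=-\int_0^\beta \partial_x\phi_{c^T(s)}(x+t)\,\mathrm{d}t$ and apply Minkowski's integral inequality together with the translation scaling to obtain a bound proportional to $|\beta|$ times a uniform factor involving $\|\partial_x\phi_c\|_{H^1_w}$. Summing the two contributions with the hypothesis $|c^T(s)-c(T+s)|+|\beta|\leq\delta$ yields the claim.

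The main technical nuisance lies in the Minkowski step: negative shifts grow the weighted norm of the soliton exponentially, so a linear-in-$|\beta|$ bound is only uniform when $|\beta|$ stays in a fixed bounded range. In practice this causes no difficulty because the local decomposition of \Cref{lem:implicitlocal} itself requires such smallness, and $C_4$ can be fixed by restricting attention to $\delta$ below a universal threshold (beyond which the inequality is trivial from $\|v(t)\|_{H^1_w}\lesssim \delta$ considerations).
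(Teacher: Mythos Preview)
Your proposal is correct and follows essentially the same route as the paper: both derive the pointwise identity relating $v^T(s,\cdot)$ to a translate of $v(T+s,\cdot)$ plus a difference of soliton profiles, apply the reverse triangle inequality in $H^1_w$, and reduce to a Lipschitz estimate on $(c,\xi)\mapsto\phi_c(\cdot-\xi)$ in $H^1_w$. Your treatment is in fact slightly more careful than the paper's, which only explicitly states the Lipschitz property in the $c$-variable; your splitting into amplitude and translation parts, together with the remark on the exponential growth under negative shifts and the need for a bound on $|\beta|$, makes explicit a point the paper leaves implicit.
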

  \begin{proof}
      We compute
\begin{align*}
&\Big|\big\|e^{w\cdot}v(T+s,\cdot)\big\|_{H^1}-e^{w(\xi(T)+c(T)s-\xi(T+s))}\big\|e^{w\cdot}v^T(s,\cdot)\big\|_{H^1}\Big|\\
    =& \Big|\big\|e^{w\cdot}v(T+s,\cdot)\big\|_{H^1}-\big\|e^{w\cdot}v^T(s,\cdot-\xi(T)-c(T)s+\xi(T+s))\big\|_{H^1}\Big|.
    \end{align*}
Using the reverse triangle inequality, we get
\begin{align*}
   &\Big|\big\|e^{w\cdot}v(T+s,\cdot)\big\|_{H^1}-e^{w(\xi(T)+c(T)s-\xi(T+s))}\big\|e^{w\cdot}v^T(s,\cdot)\big\|_{H^1}\Big|\\
   \leq& \Big\|v(T+s)-v^T\big(s,\cdot-\xi(T)-c(T)s+\xi(T+s)\big)\Big\|_{H^1_w}\\
    =&\Big\|\phi_{c(T+s)}-\phi_{c^T(s)}\big(\cdot+\xi(T+s)-\xi^T(s)\big)\Big\|_{H^1_w}.
 \end{align*}  
The result now follows by exploiting the $O(e^{-\sqrt{c}|x|})$ decay of the wave-profile
\[\phi_c(x)=\tfrac{3c}{2}\sech^2(\sqrt{c}x/2)=\frac{6c}{(e^{-\sqrt{c}x/2}+e^{\sqrt{c}x/2})^2}\]
and its derivatives $\partial_x\phi_c, \partial_x^2\phi_c, \partial_c\phi_c$ and $\partial^2_{cx}\phi_c$. Indeed, it implies that the map $ c \mapsto e^{wx}\phi_c(x)+e^{wx}\partial_x\phi_c(x)$ is Lipschitz from $[0,2c_{\mathrm{max}}]$ to $L^2$.
\end{proof}

Let us proceed by describing the dynamics of the local modulation system. We once again introduce a phase-shift parameter $\Omega^T(s)$ through
\[\xi^T(s)=\int_0^s c^T(s')\d s'+\Omega^T(s)\] 
and will see that $c^T$ and $\Omega^T$ satisfy SDEs of the form
\begin{align}
    \d c^{T} =& c_d^{\sigma,\epsilon,T}(\mathbf{m}^T(s),s) \ \d s+\sigma \big\langle c^{T}_s(\mathbf{m}^T(s),s),T_{\xi(T)+c(T)s}\d W_{T+s}^Q\big\rangle,  \label{eqn:localc}\\
    \d {\Omega}^{T}  =& \Omega_d^{\sigma,\epsilon,T}(\mathbf{m}^T(s),s)  \ \d s+\sigma\big\langle \Omega^{T}_s(\mathbf{m}^T(s),s),T_{\xi(T)+c(T)s}\d W_{T+s}^Q\big\rangle.\label{eqn:localomega}
\end{align}
A formal application of It\^{o}'s lemma then shows that
\begin{align}\d v^T=\mathcal{L}_{c(T)}v^T\ \d s+Y^{\sigma,\epsilon,T}(\mathbf{m}^T(s),s)\ \d s+ \sigma Z^{T}(\mathbf{m}^T(s),s)\ T_{\xi(T)+c(T)s}\d W^Q_{T+s},\label{eqn:vTstrong}\end{align}
where
\begin{align}
    Y^{\sigma,\epsilon,T}(\mathbf{m}^T,s)=&Y_I^{\sigma,\epsilon,T}(\mathbf{m}^T,s)+\epsilon f(T+s)v^T\label{eqn:YT}
\end{align}
with
\begin{align}Y_{I}^{\sigma,\epsilon,T}(\mathbf{m}^T,s)=&2\partial_x\big((\phi_{c(T)}-\Phi^T(\mathbf{m}^T,s))v^T\big)+N(v^T)+\epsilon f(T+s)\Phi^T(\mathbf{m}^T,s)\label{eqn:YTI}\\
&-c_d^{\sigma,\epsilon,T}(\mathbf{m}^T,s) \partial_c\Phi^T(\mathbf{m}^T,s)+\Omega_d^{\sigma,\epsilon,T}(\mathbf{m}^T,s)\partial_x\Phi^T(\mathbf{m}^T,s)\nonumber\\
&+\tfrac{1}{2}\sigma^2\Big[\big\|Q^{1/2}c_s^{T}(\mathbf{m}^T,s)\big\|_{L^2}^2\partial_c^2 +\big\|Q^{1/2}\Omega_s^{T}(\mathbf{m}^T,s)\big\|_{L^2}^2\partial_x^2\Big]\Phi^T(\mathbf{m}^T,s) \nonumber\end{align}
and 
\begin{align}
    Z^{T}(\mathbf{m}^T,s)[h]= \big(\Phi^T(\mathbf{m}^T,s)+v^T\big)h+\Big(-\big\langle c_s^{T}(\mathbf{m}^T,s),h\big\rangle  \partial_c+\big\langle \Omega_s^{T}(\mathbf{m}^T,s),h\big\rangle \partial_x\Big)\Phi^T(\mathbf{m}^T,s).\label{eqn:Zt}
\end{align}
In \eqref{eqn:YT} and \eqref{eqn:Zt}, $\partial_c\Phi^T$ should be interpreted as
\[\partial_c\Phi^T(\mathbf{m}^T,s,x):=\partial_c\phi_{c^T}\big(x+\xi(T)+c(T)s-\xi^T\big).\]
 However, we can not rigorously justify \eqref{eqn:vTstrong}, as $v^T$ is not regular enough for $\mathcal{L}_{c(T)}v^T$ to be well-defined. We therefore pass to a mild formulation with respect to the flow generated by the linear equation $w_t=\mathcal{L}_{c(T)}w$ (see \Cref{thm:linearstability}).

\begin{proposition}[See \Cref{app:mild}]\label{prop:mild}
Assume  \hyperlink{hyp:initial}{S1} and \hyperlink{hyp:weight}{S2}. For each $T,s\geq0$, the inequalities
\[\|v^T(s')\|_{L^2_w}\leq \delta_3 \quad \text{and} \quad c^T(s')\in[\tfrac{1}{2}c_{\mathrm{min}},2c_{\mathrm{max}}],\quad s' \in[0,s]\] 
imply that
\begin{align}v^T(s)=&e^{\mathcal{L}_{c(T)}s}v(T)+\int_0^se^{\mathcal{L}_{c(T)}(s-s')}Y^{\sigma,\epsilon,T}(\mathbf{m}^T(s'),s')\d s'\nonumber\\
&+ \sigma \int_0^se^{\mathcal{L}_{c(T)}(s-s')}Z^{T}(\mathbf{m}^T(s'),s')\ T_{\xi(T)+c(T)s'}\d W^Q_{T+s'} ,\label{eqn:mild}\end{align}
$\mathbb{P}$-almost surely.
\end{proposition}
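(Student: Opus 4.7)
The plan is to derive \eqref{eqn:mild} from the mild formulation of $u$ in \Cref{lem:wellposed} via three moves: transport $u$ into the frame moving with constant speed $c(T)$, subtract an It\^o expansion for the smooth soliton profile $\Phi^T(\mathbf{m}^T(s),s,\cdot)$, and then absorb the remaining linear part into the semigroup $\{e^{\mathcal{L}_{c(T)}s}\}_{s\geq 0}$ via a bounded-perturbation/variation-of-constants step. Concretely, I set $\tilde{u}(s,x) := u\big(T+s,\ x+\xi(T)+c(T)s\big)$. Translation invariance of $-\partial_x^3$ and of the covariance kernel $q$, together with the $\mathcal{F}_T$-measurability of $\xi(T)$, promote \Cref{lem:wellposed} to a mild equation for $\tilde{u}$ driven by the group $S(s) := e^{(-\partial_x^3 + c(T)\partial_x)s}$, the nonlinearity $-\partial_x(\tilde{u}^2) + \epsilon f(T+\cdot)\tilde{u}$, and the translated noise $T_{\xi(T)+c(T)s'}\d W^Q_{T+s'}$; this last object defines a valid $L^2$-cylindrical Wiener integral against $(\mathcal{F}_{T+s})_{s\geq0}$ because translating a $Q$-Wiener process by an adapted shift preserves its covariance whenever $Q$ commutes with translations.

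Second, since $\Phi^T$ depends on $s$ only through the scalar semimartingales $c^T(s), \xi^T(s)$ and the explicit linear shift $c(T)s$, with smooth and exponentially-decaying dependence on the parameters, the finite-dimensional It\^o formula applied pointwise and lifted to $H^1_w$ by dominated convergence yields a strong semimartingale representation for $\Phi^T$. Its drift and diffusion reproduce exactly the $c^{\sigma,\epsilon,T}_d, \Omega^{\sigma,\epsilon,T}_d, c^T_s, \Omega^T_s$-dependent contributions in $Y^{\sigma,\epsilon,T}_I$ and $Z^T$. Subtracting this from the mild representation of $\tilde{u}$, using the solitary-wave identity $-\partial_x^3\phi_c + c\partial_x\phi_c - \partial_x(\phi_c^2)=0$ to cancel the $\Phi^T$-only drift contributions coming from $\tilde{u}$, and expanding $-\partial_x(\tilde{u}^2) = -2\partial_x(\Phi^T v^T) - \partial_x((v^T)^2) - \partial_x((\Phi^T)^2)$, I arrive at a mild equation for $v^T$ driven by $S(s)$ whose drift differs from $\mathcal{L}_{c(T)}v^T + Y^{\sigma,\epsilon,T}(\mathbf{m}^T(s),s)$ by the bounded perturbation $-2\partial_x(\phi_{c(T)} v^T)$.

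Finally, to convert the $S$-based mild formulation into one based on $e^{\mathcal{L}_{c(T)}\cdot}$, I would invoke the perturbation identity
\begin{equation*}
e^{\mathcal{L}_{c(T)}s}g \;=\; S(s)g \;-\; 2\int_0^s e^{\mathcal{L}_{c(T)}(s-s')}\,\partial_x\big(\phi_{c(T)}\,S(s')g\big)\,\d s',\qquad g\in L^2_w,
\end{equation*}
valid because $\mathcal{L}_{c(T)}$ differs from the generator of $S$ precisely by the bounded operator $-2\partial_x(\phi_{c(T)}\cdot)$ on the weighted scale (multiplication by the Schwartz function $\phi_{c(T)}$ is bounded on $H^s_w$), with both semigroups supplied by \Cref{thm:linearstability}. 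Applying this identity inside the $S$-convolutions trades the $-2\partial_x(\phi_{c(T)} v^T)$ piece for the $e^{\mathcal{L}_{c(T)}\cdot}$-semigroup, producing \eqref{eqn:mild} with $Y^{\sigma,\epsilon,T}$ in the drift and $Z^T$ in the diffusion.

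The principal obstacle is that the strong form \eqref{eqn:vTstrong} cannot be integrated directly: $v^T(s)$ only lies in $H^1_w$, whereas $\mathcal{L}_{c(T)}v^T$ requires three spatial derivatives. All interchanges between the stochastic integral and the semigroups $S$ and $e^{\mathcal{L}_{c(T)}\cdot}$ must therefore be justified at the level of mild solutions (e.g.\ via the mild It\^o formula of Da Prato--Zabczyk / Liu--R\"ockner), leaning on the smoothing estimates \eqref{eqn:linear}--\eqref{eqn:linearL1} of \Cref{thm:linearstability} to absorb derivatives landing on the stochastic convolution. The a priori bounds $\|v^T(s')\|_{L^2_w}\leq \delta_3$ and $c^T(s')\in[\tfrac{1}{2}c_{\mathrm{min}},2c_{\mathrm{max}}]$ in the hypothesis ensure that each drift term in $Y^{\sigma,\epsilon,T}$ lies in $L^2_w$ and that $Z^T(\mathbf{m}^T(s'),s')\,T_{\xi(T)+c(T)s'}$ is Hilbert--Schmidt from $L^2_Q$ into $L^2_w$, so every stochastic convolution appearing in \eqref{eqn:mild} is well-defined.
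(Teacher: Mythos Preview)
Your proposal is correct and follows essentially the same route as the paper's proof: translate $u$ into the frame moving at speed $c(T)$ via the mild It\^o formula, expand $\Phi^T$ by the (finite-dimensional) It\^o formula combined with the traveling-wave identity, and pass from the Airy-type group $S(s)=e^{(-\partial_x^3+c(T)\partial_x)s}$ to $e^{\mathcal{L}_{c(T)}s}$ by the Duhamel/perturbation identity associated with $-2\partial_x(\phi_{c(T)}\cdot)$. The only cosmetic difference is the order of operations: the paper performs the semigroup conversion already at the level of the mild formula for $\tilde u$ (picking up the extra $+2\partial_x(\phi_{c(T)}\tilde u)$ term there) and then subtracts the mild formula for $\Phi^T$, whereas you subtract first and convert afterwards; the resulting computations are equivalent. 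One small wording issue: $-2\partial_x(\phi_{c(T)}\cdot)$ is not bounded on $L^2_w$ as you state, but rather a first-order perturbation absorbed by the smoothing \eqref{eqn:linear}--\eqref{eqn:linearL1}, which you correctly invoke a few lines later.
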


 It follows straightforwardly that
 \begin{align*}\d \big\langle v^T(s),\phi_{c(T)}\big\rangle =& \Big\langle Y^{\sigma,\epsilon,T}\big(\mathbf{m}^T(s),s\big),\phi_{c(T)}\Big\rangle \ \d s+\sigma \Big\langle Z^T\big(\mathbf{m}^T(s),s\big)[T_{\xi(T)+c(T)s}\d W_{T+s}^Q],\phi_{c(T)}\Big\rangle,\\
\d \big\langle v^T(s),\zeta_{c(T)}\big\rangle =& \Big\langle Y^{\sigma,\epsilon,T}\big(\mathbf{m}^T(s),s\big),\phi_{c(T)}\Big\rangle \ \d s+\sigma \Big\langle Z^T\big(\mathbf{m}^T(s),s\big)[T_{\xi(T)+c(T)s}\d W_{T+s}^Q],\zeta_{c(T)}\Big\rangle.
 \end{align*}
For the orthogonality conditions \eqref{eqn:ortholocal} to hold, we must have
\[\begin{bmatrix}
    \big\langle Z^T(\mathbf{m}^T,s)[h],\phi_{c(T)}\big\rangle\\
    \big\langle Z^T(\mathbf{m}^T,s)[h],\zeta_{c(T)}\big\rangle
\end{bmatrix}=0,\quad \forall h \in L^2_Q.\]
This shows that
\begin{align*}
         \begin{bmatrix}
        c^{T}_s(\mathbf{m}^T,s)\\
        \Omega^{T}_s(\mathbf{m}^T,s)
    \end{bmatrix}= (K^T)^{-1}(\mathbf{m}^T,s)\begin{bmatrix}
         (\Phi^T(\mathbf{m}^T,s)+v^T)\phi_{c(T)} \\
        ( \Phi^T(\mathbf{m}^T,s)+v^T)\zeta_{c(T)}
    \end{bmatrix},
\end{align*}
where 
\[K^T(\mathbf{m}^T,s)=\begin{bmatrix}
    \langle 
    \partial_c \Phi^T(\mathbf{m}^T,s),\phi_{c(T)}\rangle & -\langle 
    \partial_x \Phi^T(\mathbf{m}^T,s),\phi_{c(T)}\rangle\\
   \langle \partial_c \Phi^T(\mathbf{m}^T,s),\zeta_{c(T)}\rangle & -\langle 
    \partial_x \Phi^T(\mathbf{m}^T,s),\zeta_{c(T)}\rangle
    \end{bmatrix}.\]
The drift components $c_d^{\sigma,\epsilon,T}$, and
$\Omega_d^{\sigma,\epsilon,T}$ follow by solving
\begin{align*}
    \begin{bmatrix}
    \big\langle Y^{\sigma,\epsilon,T}(\mathbf{m}^T,s),\phi_{c(T)}\big\rangle\\
    \big\langle Y^{\sigma,\epsilon,T}(\mathbf{m}^T,s),\zeta_{c(T)}\big\rangle
\end{bmatrix} = 0,
\end{align*}
leading to 
\begin{align*}
    c_d^{\sigma,\epsilon,T}(\mathbf{m}^T,s)=&c^{0,T}_d(\mathbf{m}^T,s)+\epsilon f(T+s)c^T_f(\mathbf{m}^T,s)+\sigma^2 c^T_d(\mathbf{m}^T,s) \\
\Omega_d^{\sigma,\epsilon,T}(\mathbf{m}^T,s)=&\Omega^{0,T}_d(\mathbf{m}^T,s)+\epsilon f(T+s)\Omega^T_f(\mathbf{m}^T,s)+\sigma^2 \Omega^T_d(\mathbf{m}^T,s)
\end{align*}
with
\begin{align*}
    \begin{bmatrix}
        c^{0,T}_d(\mathbf{m}^T,s)\\
        \Omega^{0,T}_d(\mathbf{m}^T,s)
    \end{bmatrix}=&(K^T)^{-1}(\mathbf{m}^T,s)\begin{bmatrix}
        \big\langle N(v^T),\phi_{c(T)} \big\rangle \\
        \big\langle N(v^T),\zeta_{c(T)} \big\rangle
    \end{bmatrix}\\&+2(K^T)^{-1}(\mathbf{m}^T,s)\begin{bmatrix}
        \Big\langle \partial_x\big((\phi_{c(T)}-\Phi^T(\mathbf{m}^T,s))v^T\big),\phi_{c(T)}\Big\rangle\\
        \Big\langle \partial_x\big((\phi_{c(T)}-\Phi^T(\mathbf{m}^T,s))v^T\big),\zeta_{c(T)}\Big\rangle
    \end{bmatrix},\\
 \begin{bmatrix}
        c^{T}_f(\mathbf{m}^T,s)\\
        \Omega^{T}_f(\mathbf{m}^T,s)
    \end{bmatrix}=& (K^T)^{-1}(\mathbf{m}^T,s)\begin{bmatrix}
        \langle \Phi^T(\mathbf{m}^T,s)+v^T,\phi_{c(T)} \rangle \\
        \langle \Phi^T(\mathbf{m}^T,s)+v^T,\zeta_{c(T)} \rangle
    \end{bmatrix},\\
   \begin{bmatrix}
        c^{T}_d(\mathbf{m}^T,s)\\
        \Omega^{T}_d(\mathbf{m}^T,s)
    \end{bmatrix}=&\tfrac{1}{2} \|Q^{1/2}c^T_s(\mathbf{m}^T,s)\|_{L^2}^2(K^T)^{-1}(\mathbf{m}^T,s)\begin{bmatrix}
        \Big\langle \partial_c^2 \Phi^T(\mathbf{m}^T,s),\phi_{c(T)} \Big\rangle\\
        \Big\langle \partial_c^2\Phi^T(\mathbf{m}^T,s) ,\zeta_{c(T)} \Big\rangle
    \end{bmatrix}\\
    &+\tfrac{1}{2}\|Q^{1/2}\Omega_s^T(\mathbf{m}^T,s)\|_{L^2}^2(K^T)^{-1}(\mathbf{m}^T,s)\begin{bmatrix}
        \Big\langle \partial_x^2 \Phi^T(\mathbf{m}^T,s),\phi_{c(T)} \Big\rangle\\
        \Big\langle \partial_x^2\Phi^T(\mathbf{m}^T,s) ,\zeta_{c(T)} \Big\rangle
    \end{bmatrix}.
\end{align*}

We now establish control on the local modulation parameters, assuming a-priori control of the quantity
\begin{align}R^T_w(s):=\|v^T(s)\|_{L^2_w}+\big|c(T)-c^T(s)\big|+\big|\xi(T)+c(T)s-\xi^T(s)\big|.\label{eqn:aprioriw}\end{align}

\begin{lemma}\label{lem:locmodcontrol}
Assuming \hyperlink{hyp:initial}{S1} and \hyperlink{hyp:weight}{S2}, there exist constants $\delta_5, C_5>0$ such that following holds true. For all $T,s\geq 0$, the bounds
\[c(T),c^T(s)\in [\tfrac{1}{2}c_{\mathrm{min}},2c_{\mathrm{max}}]\quad \text{and}\quad R^T_w(s)\leq \delta_5\]
imply
\begin{align}
\big\|Q^{1/2}c^T_s(\mathbf{m}^T(s),s)\big\|_{L^2}+\big\|Q^{1/2}\Omega^T_s(\mathbf{m}^T(s),s)\big\|_{L^2}\leq & C_5\big(1+\|v^T(s)\|_{L^2_w}\big),\label{eqn:ctscontrol}\\
    \big|c_d^{T,0}(\mathbf{m}^T(s),s)\big|+\big|\Omega_d^{T,0}(\mathbf{m}^T(s),s)\big|
    \leq & C_5 R^T_w(s)\|v^T(s)\|_{L^2_w},\label{eqn:ctd0control}\\
    \big|c_f(\mathbf{m}^T(s),s)\big|+\big|\Omega_f(\mathbf{m}^T(s),s)\big|\leq & C_5\big(1+\|v^T(s)\|_{L^2_w}\big),\label{eqn:ct?control}\\
\big|c_d(\mathbf{m}^T(s),s)\big|+\big|\Omega_d(\mathbf{m}^T(s),s)\big|\leq & C_5\big(1+\|v^T(s)\|_{L^2_w}\big).\label{eqn:ctdcontrol}
\end{align}
\end{lemma}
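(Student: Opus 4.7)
The argument follows the same template as the proof of \Cref{lem:modcontrol}: first establish a uniform operator-norm bound on $(K^T)^{-1}(\mathbf{m}^T(s),s)$, and then estimate each of the explicit right-hand sides for $(c_s^T,\Omega_s^T)$, $(c_d^{0,T},\Omega_d^{0,T})$, $(c_f^T,\Omega_f^T)$ and $(c_d^T,\Omega_d^T)$ using the a priori control $R_w^T(s)\leq \delta_5$, the duality $|\langle f,g\rangle|\leq \|f\|_{L^2_w}\|g\|_{L^2_{-w}}$, and the bound $\|Q^{1/2}g\|_{L^2}\leq \|q\|_{L^1}^{1/2}\|g\|_{L^2}$ already used in \Cref{lem:modcontrol}.

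At the ``base'' configuration $v^T=0$, $c^T=c(T)$, $\xi^T=\xi(T)+c(T)s$, we have $\Phi^T=\phi_{c(T)}$, and using $\langle\partial_x\phi_{c(T)},\phi_{c(T)}\rangle=0$ the matrix $K^T$ reduces to a lower-triangular matrix whose diagonal entries $\langle\partial_c\phi_{c(T)},\phi_{c(T)}\rangle$ and $-\langle\partial_x\phi_{c(T)},\zeta_{c(T)}\rangle$ are explicit non-zero polynomials in $c(T)^{1/2}$, as in the global case. This provides a uniform bound on its inverse for $c(T)\in[\tfrac12 c_{\mathrm{min}},2c_{\mathrm{max}}]$. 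I would then show that $K^T(\mathbf{m}^T(s),s)$ is an $\mathcal{O}(R_w^T(s))$ perturbation of this base matrix, by exploiting the Lipschitz dependence of the maps $c\mapsto\phi_c,\partial_c\phi_c,\partial_x\phi_c$ and $\xi\mapsto\phi_c(\cdot+\xi)$ in $L^2_{-w}$ that already underlies the proof of \Cref{lem:correspondence}. Taking $\delta_5$ small enough then ensures a uniform bound on $\|(K^T)^{-1}(\mathbf{m}^T(s),s)\|_{\mathrm{op}}$.

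With this invertibility in hand, each of the four bounds reduces to estimating its right-hand side. For \eqref{eqn:ctscontrol} and \eqref{eqn:ct?control}, the vectors $(\Phi^T+v^T)\phi_{c(T)}$ and $(\Phi^T+v^T)\zeta_{c(T)}$ split into a uniformly bounded $\Phi^T$-piece and a $v^T$-piece controlled by $\|v^T\|_{L^2_w}$ through weighted/unweighted duality (using that $e^{-2wx}\phi_{c(T)}^2$ and $e^{-2wx}\zeta_{c(T)}^2$ are bounded under \hyperlink{hyp:weight}{S2}). For \eqref{eqn:ctdcontrol}, applying \eqref{eqn:ctscontrol} yields $\|Q^{1/2}c_s^T\|_{L^2}^2+\|Q^{1/2}\Omega_s^T\|_{L^2}^2\lesssim (1+\|v^T\|_{L^2_w})^2\leq (1+\delta_5)(1+\|v^T\|_{L^2_w})$, multiplied by uniformly bounded inner products of $\partial_c^2\Phi^T$ and $\partial_x^2\Phi^T$ against $\phi_{c(T)},\zeta_{c(T)}$.

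The most delicate estimate is \eqref{eqn:ctd0control}, where the bound must scale as $R_w^T(s)\|v^T\|_{L^2_w}$ rather than merely $\|v^T\|_{L^2_w}^2$. After integration by parts the $N(v^T)=-\partial_x(v^T)^2$ contribution becomes $\langle (v^T)^2,\partial_x\phi_{c(T)}\rangle$, which is bounded by $\|e^{-2w\cdot}\partial_x\phi_{c(T)}\|_\infty\|v^T\|_{L^2_w}^2\leq C R_w^T(s)\|v^T\|_{L^2_w}$ using $\|v^T\|_{L^2_w}\leq R_w^T(s)$. For the additional interaction term $2\partial_x\big((\phi_{c(T)}-\Phi^T)v^T\big)$ appearing in $Y_I^{\sigma,\epsilon,T}$, integration by parts yields $\langle (\phi_{c(T)}-\Phi^T)v^T,\partial_x\phi_{c(T)}\rangle$, bounded by $\|\phi_{c(T)}-\Phi^T\|_\infty\|\partial_x\phi_{c(T)}\|_{L^2_{-w}}\|v^T\|_{L^2_w}\lesssim R_w^T(s)\|v^T\|_{L^2_w}$ using that $(c,\xi)\mapsto \phi_c(\cdot+\xi)$ is Lipschitz into $L^\infty$. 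Analogous computations pair with $\zeta_{c(T)}$. The main subtlety is thus to confirm that the interaction term in $Y_I^{\sigma,\epsilon,T}$, introduced precisely because we linearize around the fixed profile $\phi_{c(T)}$ rather than the moving $\Phi^T$, produces exactly the scaling $R_w^T(s)\|v^T\|_{L^2_w}$ required in \eqref{eqn:ctd0control}.
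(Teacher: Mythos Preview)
Your proposal is correct and follows essentially the same approach as the paper: first show $(K^T)^{-1}(\mathbf{m}^T(s),s)$ is uniformly bounded by recognizing $K^T(\mathbf{m}^T(0),0)$ as an invertible lower-triangular matrix and using the Lipschitz properties of $\phi_c$ to show $K^T(\mathbf{m}^T(s),s)$ is an $\mathcal{O}(R_w^T(s))$ perturbation, then estimate each right-hand side. The only cosmetic difference is that for the interaction term in \eqref{eqn:ctd0control} the paper places the $L^\infty$-bound on $\partial_x\phi_{c(T)}$ (respectively $\partial_c\phi_{c(T)}$ when pairing with $\zeta_{c(T)}$) and the $L^2_{-w}$-bound on $\phi_{c(T)}-\Phi^T$, rather than the other way around as you suggest; either distribution of norms works.
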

\begin{proof}
   As in the proof of \Cref{lem:modcontrol}, note that
   \[K^T(\mathbf{m}^T(0),0)=\begin{bmatrix}
    \langle 
    \partial_c \phi_{c(T)},\phi_{c(T)}\rangle & 0\\
   \langle \partial_c \phi_{c(T)},\zeta_{c(T)}\rangle & -\langle 
    \partial_x \phi_{c(T)},\zeta_{c(T)}\rangle
    \end{bmatrix}.\]
   In particular, we can find constants $\tilde{C}_1,\tilde{C}_2>0$ that ensure $\|A^{-1}\|_{\mathrm{op}}\leq \tilde{C}_2$ for all $A\in \R^{2\times 2}$ which satisfy
\[\Big|A_{ij}-K_{ij}^T(\mathbf{m}^T(0),0)\Big|\leq \tilde{C}_1, \quad  (i,j) \in \{1,2\}^2.\]
Recalling that  $\Phi^T(\mathbf{m}^T(s),s)=\phi_{c^T(s)}\big(\cdot+\xi(T)+c(T)s-\xi^T(s)\big)$, the Lipschitz properties of $\phi_c$ imply
\begin{align*}
    \Big|K_{ij}^T(\mathbf{m}^T(s),s)-K_{ij}^T(\mathbf{m}^T(0),0)\Big|\leq  & \tilde{C}_3\Big(\big|\xi(T)+c(T)s-\xi^T(s)\big|+\big|c^T(s)-c(T)\big|\Big)
\end{align*}
for all $(i,j)\in\{1,2\}^2$. In case $\delta_5\leq \tilde{C}_3^{-1}\tilde{C}_1$, it follows that $\big\|(K^T)^{-1}(\mathbf{m}^T,s)\big\|_{\mathrm{op}}\leq \tilde{C}_2$. Estimating 
\begin{align*}
    \Big|\Big\langle \partial_x\big((\phi_{c(T)}-\Phi^T(\mathbf{m}^T,s))v^T(s)\big),\phi_{c(T)}\Big\rangle\Big|+
        \Big|\Big\langle \partial_x\big((\phi_{c(T)}-\Phi^T(\mathbf{m}^T(s),s))v^T(s)\big),\zeta_{c(T)}\Big\rangle\Big|\\
        \leq \big(\|\partial_x\phi_{c(T)}\|_{\infty}+\|\partial_x\phi_{c(T)}\|_{\infty}\big)\|\phi_{c(T)}-\Phi^T(\mathbf{m}^T(s),s)\|_{L^2_{-w}}\|v^T(s)\|_{L^2_w}
\end{align*}
 establishes \eqref{eqn:ctd0control}. The estimates \eqref{eqn:ctscontrol}, \eqref{eqn:ct?control} and \eqref{eqn:ctdcontrol} follow by applying the Cauchy-Schwarz inequality.
\end{proof}
We conclude this section with a result on the deterministic integral in \eqref{eqn:mild}. This is provided by \Cref{cor:shortest} below and forms the basis of our stability arguments. Our estimates for the deterministic terms ($\sigma=0$) in \eqref{eqn:mild} are similar to those in \cite{pegoweinstein}, save for our treatment of the nonlinearity $N(v^T)=-\partial_x(v^T)^2$. Here we follow the approach of Mizumachi and Tzvetkov \cite{mizumachi} which uses property \eqref{eqn:linearL1}. This allows us to control the nonlinear term based on the condition that $\|v^T\|_{L^2}$ is small. This is an improvement over the standard argument used in \cite{pegoweinstein}, which requires control of $\|\partial_x v^T\|_{L^2}$ and consequently a more cumbersome energy argument. 
\begin{lemma}\label{lem:L1}
Assuming \hyperlink{hyp:initial}{S1} and \hyperlink{hyp:weight}{S2}, there exists a constant $C_6>0$ such that for each $T,s\geq 0$ the inequalities
\[c(T),c^T(s)\in [\tfrac{1}{2}c_{\mathrm{min}},2c_{\mathrm{max}}]\quad \text{and}\quad R^T_w(s)\leq \delta_5\]
imply
    \begin{align*}\Big\|Y_I^{\sigma,\epsilon,T}(\mathbf{m}^T(s),s)\Big\|_{L^1_w} \leq&C_{6}\sigma^2\big(1+\|v^T(s)\|_{L^2_w}\big)^2+C_6\epsilon\\&+C_6\Big(\|v^T(s)\|_{L^2}+R^T_w(s)\Big)\|v^T(s)\|_{H^1_w}.
\end{align*}
\end{lemma}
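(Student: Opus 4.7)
My plan is to bound each of the six constituent terms of $Y_I^{\sigma,\epsilon,T}$ in $L^1_w$ separately, after first expanding the drift coefficients $c_d^{\sigma,\epsilon,T}$ and $\Omega_d^{\sigma,\epsilon,T}$ into their $c_d^{0,T}+\epsilon f(T+s)c_f^T+\sigma^2 c_d^T$ decompositions (and likewise for $\Omega$). The workhorse estimate throughout will be the factorization
\[
\|fg\|_{L^1_w}=\int e^{w_1 x}|f|\,e^{w_2 x}|g|\,\d x\leq \|f\|_{L^2_{w_1}}\|g\|_{L^2_{w_2}},\qquad w_1+w_2=w,
\]
applied with either $(w_1,w_2)=(0,w)$ or $(w,0)$ depending on which factor is $v^T$ or $\partial_x v^T$.

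For the nonlinearity $N(v^T)=-2v^T\partial_x v^T$, this gives $\|N(v^T)\|_{L^1_w}\leq 2\|v^T\|_{L^2}\|\partial_x v^T\|_{L^2_w}\leq 2\|v^T\|_{L^2}\|v^T\|_{H^1_w}$, which produces the $\|v^T\|_{L^2}\|v^T\|_{H^1_w}$ contribution in the target bound and is precisely the observation attributed to Mizumachi--Tzvetkov after the statement. For the transport-like term $2\partial_x\bigl((\phi_{c(T)}-\Phi^T)v^T\bigr)$, I distribute the derivative and apply the same factorization to obtain a bound of order $\|\phi_{c(T)}-\Phi^T\|_{H^1}\|v^T\|_{H^1_w}$; combining $\Phi^T=\phi_{c^T(s)}(\cdot + a)$ with $a=\xi(T)+c(T)s-\xi^T(s)$, the fundamental theorem of calculus in the $c$-variable and in the translation parameter $a$ yields $\|\phi_{c(T)}-\Phi^T\|_{H^1}\leq C\bigl(|c(T)-c^T(s)|+|a|\bigr)\leq C R^T_w(s)$, producing the $R^T_w(s)\|v^T\|_{H^1_w}$ term.

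The remaining four terms in $Y_I^{\sigma,\epsilon,T}$ are all multiples of the exponentially localized soliton profile $\Phi^T$ or one of its derivatives $\partial_x\Phi^T, \partial_c\Phi^T, \partial_x^2\Phi^T, \partial_c^2\Phi^T$, each of which has $L^1_w$-norm uniformly bounded on $c^T(s)\in[\tfrac12 c_{\min},2c_{\max}]$ (using $w<\sqrt{c_{\min}}/3$ together with the explicit $\sech^2$ decay). I then read off the coefficient bounds from \Cref{lem:locmodcontrol}: the $c_d^{0,T}$ and $\Omega_d^{0,T}$ pieces are bounded by $C R^T_w(s)\|v^T\|_{L^2_w}\leq C R^T_w(s)\|v^T\|_{H^1_w}$; the $\epsilon f(T+s) c_f^T$, $\epsilon f(T+s)\Omega_f^T$ pieces together with the bare $\epsilon f(T+s)\Phi^T$ term are bounded by $C\epsilon\bigl(1+\|v^T\|_{L^2_w}\bigr)\leq C\epsilon$ since $\|v^T\|_{L^2_w}\leq\delta_5$; and finally the $\sigma^2 c_d^T$, $\sigma^2\Omega_d^T$ contributions, together with the explicit $\tfrac12\sigma^2\|Q^{1/2}c_s^T\|^2\partial_c^2\Phi^T$ and $\tfrac12\sigma^2\|Q^{1/2}\Omega_s^T\|^2\partial_x^2\Phi^T$ terms, are all dominated by $C\sigma^2(1+\|v^T\|_{L^2_w})^2$ using \eqref{eqn:ctscontrol} and \eqref{eqn:ctdcontrol}. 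Summing all contributions gives the claim.

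The only subtlety I anticipate is bookkeeping the translation-dependent constants: because the weight $e^{wx}$ picks up a factor $e^{-wa}$ under the shift by $a$ in $\Phi^T$, one must use the a-priori bound $|a|\leq R^T_w(s)\leq\delta_5$ to absorb such factors into the constant $C_6$. Beyond this, every piece reduces to an application of \Cref{lem:locmodcontrol} combined with uniform $L^1_w$- or $H^1$-bounds on the soliton profile, so no genuinely new estimate is needed.
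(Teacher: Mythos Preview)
Your proposal is correct and follows essentially the same term-by-term strategy as the paper's proof: the factorization $\|fg\|_{L^1_w}\leq\|f\|_{L^2}\|g\|_{L^2_w}$ for the nonlinearity and the transport term, combined with the uniform $L^1_w$-bounds on the profile derivatives and the coefficient estimates from \Cref{lem:locmodcontrol} for the remaining terms. Your explicit decomposition of $c_d^{\sigma,\epsilon,T}$ and $\Omega_d^{\sigma,\epsilon,T}$ into their $c_d^{0,T}+\epsilon f c_f^T+\sigma^2 c_d^T$ constituents is in fact slightly more careful than the paper's presentation, which bounds the full drift coefficient in one line; your treatment makes transparent where each of the $R^T_w\|v^T\|_{H^1_w}$, $\epsilon$, and $\sigma^2(1+\|v^T\|_{L^2_w})^2$ contributions originates.
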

\begin{proof}
    The various terms in $Y_I^{\sigma,\epsilon,T}\big(\mathbf{m}^T(s),s\big)$ -- see \eqref{eqn:YTI} -- can be controlled as follows. The inequality
\begin{align*}
    \Big\|2\partial_x\big((\phi_{c(T)}-\Phi^T(\mathbf{m}^T(s),s)) v^T \big)\Big\|_{L^1_w}\leq & 2\big\|\phi_{c(T)}-\Phi^T(\mathbf{m}^T(s),s)\big\|_{H^1}\big\|v^T(s)\big\|_{H^1_w}\\
    \leq & \tilde{C}_1\Big(\big|c(T)-c^T(s)\big|+\big|\xi(T)+c(T)s-\xi^T(s)\big|\Big) \big\| v^T(s)\big\|_{H^1_w}
\end{align*}
follows from the fact that $c \mapsto \phi_c(x)+\partial_x\phi_c(x)$ is Lipschitz from $[0,c_{\mathrm{max}}]$ to $L^2$.
Via \Cref{lem:locmodcontrol} we find
\begin{align*}&\Big\|\Big[-c_d^{\sigma,\epsilon,T}(\mathbf{m}^T(s),s) \partial_c+\Omega_d^{\sigma,\epsilon,T}(\mathbf{m}^T(s),s)\partial_x \Big]\Phi^T(\mathbf{m}^T(s),s)\Big\|_{L^1_w}\\
\leq& \tilde{C}_2\Big|c_d^{\sigma,\epsilon,T}(\mathbf{m}^T(s),s) +\Omega_d^{\sigma,\epsilon,T}(\mathbf{m}^T(s),s) \Big| \\
\leq&\tilde{C}_2C_5R^T_w(s)\|v^T(s)\|_{L^2_w},
\end{align*}
where $\tilde{C}_2$ is a constant large enough to ensure
\[\|\partial_c\Phi^T(\mathbf{m}^T(s),s)\|_{L^1_w}+\|\partial_x\Phi^T(\mathbf{m}^T(s),s)\|_{L^1_w}\leq \tilde{C}_2.\]
Similarly,
\begin{align*}\big\|Q^{1/2}c_s^{T}(\mathbf{m}^T(s),s)\big\|_{L^2}^2\big\|\partial_c^2\Phi^T(\mathbf{m}^T(s),s)\big\|_{L^1_w}
\leq &  \tilde{C}_3\big(1+\|v^T(s)\|_{L^2_w}\big)^2,
\end{align*}
and
\begin{align*}
\big\|Q^{1/2}\Omega_s^{T}(\mathbf{m}^T(s),s)\big\|_{L^2}^2\big\|\partial_x^2\Phi^T(\mathbf{m}^T(s),s)\big\|_{L^1_w}
\leq &  \tilde{C}_3\big(1+\|v^T(s)\|_{L^2_w}\big)^2.
\end{align*}
Lastly,
\[\Big\|\partial_x\big((v^T(s))^2\big)\Big\|_{L^1_w}\leq 2\|v^T(s)\|_{L^2}\|v^T(s)\|_{H^1_w}, \]
and
\[\big\|\epsilon f(T+s)\Phi^T(\mathbf{m}^T(s),s)\big\|_{L^1_w}\leq \tilde{C}_4\epsilon.\]
\end{proof}

\begin{corollary}
\label{cor:shortest}
Assuming \hyperlink{hyp:initial}{S1} and \hyperlink{hyp:weight}{S2}, there exists a constant $C_7>0$ such that for each $T,s\geq 0$ the inequalities
\[ c_{\mathrm{min}}\leq c^T(s') \leq c_{\mathrm{max}} \quad \text{and} \quad R^T_w(s')\leq\delta_5,\quad s'\in[0,s]\]
imply that
\begin{align}
   \Big\|\int_0^s e^{\mathcal{L}_{c(T)}(s-s')}Y^{\sigma,\epsilon,T}\big(\mathbf{m}^T(s'),s')\d s'\Big\|_{H^1_w}
   \leq &C_7s\sup_{0\leq s'\leq s}\Big(\|v^T(s')\|_{L^2}+R^T_w(s')\Big) \big\| v^T(s')\big\|_{H^1_w}\nonumber\\
    &+C_7(\sigma^2+\epsilon )s,\label{eqn:Yint}
\end{align}
holds $\mathbb{P}$-almost surely.
\end{corollary}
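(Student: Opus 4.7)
My plan is to leverage two structural facts that were established earlier in the paper. First, by the very construction of the drift components $c_d^{\sigma,\epsilon,T}$ and $\Omega_d^{\sigma,\epsilon,T}$, the forcing $Y^{\sigma,\epsilon,T}(\mathbf{m}^T(s'),s')$ satisfies $\langle Y^{\sigma,\epsilon,T},\phi_{c(T)}\rangle = \langle Y^{\sigma,\epsilon,T},\zeta_{c(T)}\rangle = 0$ and therefore lies in $\mathrm{Ker}(P_{c(T)})$. Second, on this spectral complement, \Cref{thm:linearstability} furnishes smoothing estimates for $\{e^{\mathcal{L}_{c(T)}t}\}_{t\geq 0}$ with integrable singular kernels and exponential decay rate $b>0$, since $w<\sqrt{c_{\mathrm{min}}}/3\leq\sqrt{c(T)}/3$.

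After applying the Bochner triangle inequality to move the $H^1_w$-norm inside the time integral, I would split the integrand according to \eqref{eqn:YT} as $Y^{\sigma,\epsilon,T}=Y_I^{\sigma,\epsilon,T}+\epsilon f(T+s')v^T$. The local orthogonality \eqref{eqn:ortholocal} places $v^T(s')$, and therefore also $Y_I^{\sigma,\epsilon,T}(\mathbf{m}^T(s'),s')$, in $\mathrm{Ker}(P_{c(T)})$, so $Q_{c(T)}$ may be inserted freely in front of each factor at no cost. For the nonlinear piece, I would use the $L^1_w$-based smoothing bound \eqref{eqn:linearL1} with $k\in\{0,1\}$ to control $\|e^{\mathcal{L}_{c(T)}(s-s')}Q_{c(T)}Y_I^{\sigma,\epsilon,T}\|_{H^1_w}$ by $M\bigl[(s-s')^{-1/4}+(s-s')^{-3/4}\bigr]e^{-b(s-s')}\,\|Y_I^{\sigma,\epsilon,T}\|_{L^1_w}$, and substitute the estimate from \Cref{lem:L1}. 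Under the a priori hypothesis $\|v^T(s')\|_{L^2_w}\leq R^T_w(s')\leq\delta_5$, the factor $(1+\|v^T\|_{L^2_w})^2$ collapses into a constant, producing the $\sigma^2$, $\epsilon$, and $(\|v^T\|_{L^2}+R^T_w)\|v^T\|_{H^1_w}$ terms of \eqref{eqn:Yint}.

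For the linear contribution $\epsilon f(T+s')v^T(s')$, I would instead use the $L^2_w$-based estimate \eqref{eqn:linear}, which gives $M\bigl[1+(s-s')^{-1/2}\bigr]e^{-b(s-s')}\epsilon\|v^T(s')\|_{L^2_w}$. Because $\|v^T(s')\|_{L^2_w}\leq\delta_5$, this piece contributes at most a constant multiple of $\epsilon$ after time-integration and is absorbed into the $C_7(\sigma^2+\epsilon)s$ term. The remaining time integrals $\int_0^s\tau^{-\alpha}e^{-b\tau}\,d\tau$ with $\alpha\in\{1/4,1/2,3/4\}$ are each uniformly bounded in $s$ and can be loosely majorized by a constant multiple of $s$ to match the linear-in-$s$ form of the stated bound.

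I anticipate no substantive analytic obstacle; the only real care required is book-keeping which factors depend on $s'$ and must stay inside the time integral, versus which can be pulled out as suprema over $[0,s]$. The split into $L^1_w$-based and $L^2_w$-based semigroup estimates is essential: a direct $L^2_w$ bound on $Y_I^{\sigma,\epsilon,T}$ would demand control of $\|\partial_x(v^T)^2\|_{L^2_w}$ and hence of $\|v^T\|_{H^1}$, precisely the complication that the Mizumachi--Tzvetkov-style argument underlying \Cref{lem:L1} was introduced to avoid.
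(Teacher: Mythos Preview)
Your proposal is correct and follows essentially the same route as the paper: split $Y^{\sigma,\epsilon,T}$ into $Y_I^{\sigma,\epsilon,T}$ (handled in $L^1_w$ via \Cref{lem:L1} together with the smoothing bound \eqref{eqn:linearL1}) and the linear piece $\epsilon f(T+s')v^T(s')$ (handled in $L^2_w$ via \eqref{eqn:linear}), after observing that both terms lie in the stable subspace so the singular kernels $\tau^{-\alpha}e^{-b\tau}$ are integrable. The paper likewise only records that $\sup_{s\ge 0}\int_0^s e^{-b\tau}\tau^{-\alpha}\,d\tau<\infty$ rather than extracting a genuine factor of $s$, so your ``loose majorization by a constant multiple of $s$'' matches the level of precision in the original argument.
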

\begin{proof}
Consider the decomposition
\[Y^{\sigma,\epsilon,T}\big(\mathbf{m}^T(s'),s'\big)=\underbrace{Y_I^{\sigma,\epsilon,T}\big(\mathbf{m}^T(s'),s'\big)}_{ \text{in } L^1_w}+\underbrace{\epsilon f(T+s')v^T(s')}_{ \text{in } L^2_w}.\] 
Both terms are contained in the stable subspace of $\big\{e^{\mathcal{L}_{c(T)}t}\big\}_{t\geq 0}$. We have via \eqref{eqn:linear}
\[\Big\|\int_0^s e^{\mathcal{L}_{c(T)}(s-s')}\big[f(T+s')v^T(s')\big]\d s' \Big\|_{H^1_w} \leq M  \int_0^s e^{-b(s-s')}(s-s')^{-1/2}\d s' \sup_{s'\in [0,s]}\big\|v^T(s')\big\|_{L^2_w}.\]
Here, $b$ and $M$ are the constants appearing in the semigroup-bounds \eqref{eqn:linear} and \eqref{eqn:linearL1}. We remark also that 
\[\sup_{s\geq 0}\int_0^s e^{-b(s-s')}(s-s')^{-1/2}\d s'<\infty.\]
Using \eqref{eqn:linearL1}, on the other hand, we have
\begin{align*}
    &\Big\|\int_0^s e^{\mathcal{L}_{c(T)}(s-s')}\Big[Y_I^{\sigma,\epsilon,T}\big(\mathbf{m}^T(s'),s'\big)\Big]\d s' \Big\|_{H^1_w} \\ \leq &M  \int_0^s e^{-b(s-s')}(s-s')^{-3/4}\d s\ \sup_{s'\in [0,s]}\Big\|Y_I^{\sigma,\epsilon,T}\big(\mathbf{m}^T(s'),s'\big)\Big\|_{L^1_w},\end{align*}
    where also
    \[\sup_{s\geq 0}\int_0^s e^{-b(s-s')}(s-s')^{-3/4}\d s<\infty.\]
 The result now follows by applying \Cref{lem:L1}.
\end{proof}

\section{Weighted norm control}
\label{sec:weighted}
In this section, we control the local remainder $v^T$ on time intervals $[0,\Delta T]$. We do so by exploiting the stability properties of the linear flow $\{e^{\mathcal{L}_{c(T)}t}\}_{t\geq 0}$ on weighted spaces. As pointed out, control of the local remainder $v^T$ transfers to the global remainder $v$ via \Cref{lem:correspondence}. 
The main result of this section is \Cref{prop:remainder} below, which bounds the probability that $v^T$ grows large on a time interval $[0,\Delta T]$. We show that $\|v^T\|_{H^1_w}$ only grows large with small probability, provided that 
\begin{itemize}
    \item[--] the soliton amplitude remains within fixed bounds;
    \item[--] the unweighted $L^2$-norm of $v^T$ remains small;
    \item[--] the difference between global and local modulation parameters remains small.
\end{itemize}
 An important ingredient for ensuring that we can repeat our stability argument on time intervals of size $\Delta T$ is the exponential decay of the semigroup after time $\Delta T$ in the first term of \eqref{eqn:mildestimate}. We thus require that $\Delta T$ is large enough to guarantee significant decay. We formulate this in the following condition, and fix constants $\delta_*, \eta_0$ for later use.
\begin{itemize}
\item[\textbf{C1}]{
  \phantomsection\hypertarget{hyp:constants}{}\textit{The constants $\Delta T,\delta_*>0$ and  $\eta_0>0$ satisfy
  \begin{itemize}
      \item $\Delta T=  \log(6M)/b$;
      \item $\delta_*\leq \min\big\{\frac{1}{216 MC_7(\Delta T+\Delta T^2)},\frac{1}{4}c_{\mathrm{min}},\frac{1}{2}c_{\mathrm{max}},\frac{1}{e^2+3C_4}\big\}$;
      \item $\eta_0\leq \min\big\{\delta_*,\delta_5,1,\frac{1}{8C_5(\Delta T+\Delta T^2)}\big\}$.
  \end{itemize}
 }}
\end{itemize}
The constants $C_4, C_5, \delta_5$ and $C_7$ have been introduced in \Cref{lem:correspondence}, \Cref{lem:locmodcontrol} and \Cref{cor:shortest}, respectively. The constants $b,M$ are introduced in the semigroup-bounds \eqref{eqn:linear}.
To keep track of the weighted norm $\|v^T(s)\|_{H^1_w}$, we define for each $T,\eta>0$ the stopping time $\tau^T_{\mathrm{st}}$ as 
\[\tau^T_{\mathrm{st}}(\eta)=\sup\big\{s\geq 0:\big\|v^T(s)\big\|_{H^1_w}\leq \eta  \big\}.\]
We furthermore introduce stopping times $\tau^T_c, \tau^T_{\mathrm{en}}, \tau^{T}_{\mathrm{amp}}$ and $\tau^{T}_{\mathrm{pos}}$ which encode the conditions for stability:
\begin{align*}
    \tau^T_c=&\sup\big\{s\geq 0:c(T+s)\in[\tfrac{1}{2}c_{\mathrm{min}},2c_{\mathrm{max}}]  \big\};\\
   \tau^T_{\mathrm{en}}(\eta)=&\sup\big\{s\geq 0:\big\|v^T(s)\big\|_{L^2}\leq \eta  \big\};\\
   \tau^{T}_{\mathrm{amp,1}}(\eta)
   =&\sup\big\{s\geq 0:\big|c(T)-c^T(s)\big|\leq \delta_*\eta\}; \\
   \tau^{T}_{\mathrm{amp,2}}(\eta)
   =&\sup\big\{s\geq 0:\big|c(T+s)-c^T(s)\big|\leq \delta_*\eta\}; \\
  \tau^{T}_{\mathrm{pos,1}}(\eta)
   =&\sup\big\{s\geq 0:\big|\xi(T)+c(T)s-\xi^T(s)\big|\leq 2\Delta T \delta_*\eta  \big\};\\
   \tau^{T}_{\mathrm{pos,2}}(\eta)
   =&\sup\big\{s\geq 0:\big|\xi(T+s)-\xi^T(s)\big|\leq 2\Delta T \delta_*\eta  \big\},
\end{align*}
and we define
\[\tau^T_{\mathrm{mod}}(\eta)=\tau^{T}_{\mathrm{amp},1}(\eta)\wedge\tau^{T}_{\mathrm{amp},2}(\eta)\wedge\tau^{T}_{\mathrm{pos,1}}(\eta)\wedge \tau^{T}_{\mathrm{pos,2}}(\eta).\]
 Our result is then as follows.
 
\begin{proposition}[Short-time control]\label{prop:remainder}
Assuming \hyperlink{hyp:initial}{S1}, \hyperlink{hyp:weight}{S2} and \hyperlink{hyp:constants}{C1}, there exist constants $\delta_9> 0$ and $C_9\geq 1$ such that the following holds true. For all $\eta\in[0,\eta_0]$, $C_9\sigma,C_9\epsilon\in [0,\eta]$ and $T\geq 0$ the events
\[\mathcal{E}_1=\big\{\tau^T_{\mathrm{st}}(\eta) \leq \Delta T\wedge\tau^T_{\mathrm{en}}(\delta_*)\wedge\tau^T_{\mathrm{mod}}(\eta)\wedge\tau^T_c\big\}\]
and
\[\mathcal{E}_2=\big\{\big\|v^T(\Delta T)\big\|_{H^1_w}\geq \tfrac{\eta}{9M}\big\}\cap\big\{\tau^T_{\mathrm{st}}(\eta)\wedge\tau^T_{\mathrm{en}}(\delta_*)\wedge\tau^T_{\mathrm{mod}}(\eta)\wedge\tau^T_c\geq\Delta T\big\}\]
satisfy
\begin{align}\mathbb{P}\Big[ \mathcal{E}_1\cap \big\{\|v(T)\|_{H^1_w}\leq \tfrac{\eta}{3M} \big\}\Big]+\mathbb{P}\Big[ \mathcal{E}_2\cap \big\{\big\|v(T)\big\|_{H^1_w}\leq \tfrac{\eta}{3M} \big\}\Big]\leq e^{-\delta_9\eta^2/\sigma^2}.\label{eqn:shortcontrol1}\end{align}
\end{proposition}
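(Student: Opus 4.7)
The plan is to apply the mild formulation of \Cref{prop:mild} and split
\begin{align*}
v^T(s) = L(s) + D(s) + \sigma S(s),
\end{align*}
with $L(s) = e^{\mathcal{L}_{c(T)}s}v(T)$, $D(s)$ the deterministic drift integral of $Y^{\sigma,\epsilon,T}$, and $S(s)$ the stochastic convolution against $Z^T$. I restrict the analysis to the stopped interval $[0, \tau \wedge \Delta T]$ with $\tau := \tau^T_{\mathrm{st}}(\eta) \wedge \tau^T_{\mathrm{en}}(\delta_*) \wedge \tau^T_{\mathrm{mod}}(\eta) \wedge \tau^T_c$, on which the hypotheses of \Cref{lem:locmodcontrol} and \Cref{cor:shortest} are in force.

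For the linear piece $L$, the orthogonality conditions \eqref{eqn:ortholocal} at $s = 0$ give $v(T) = Q_{c(T)} v(T)$, so \Cref{thm:linearstability} yields $\|L(s)\|_{H^1_w} \leq M e^{-bs}\|v(T)\|_{H^1_w}$. Under $\|v(T)\|_{H^1_w} \leq \eta/(3M)$ and the choice $\Delta T = \log(6M)/b$ from \hyperlink{hyp:constants}{C1}, this delivers $\|L(s)\|_{H^1_w} \leq \eta/3$ for every $s \in [0, \Delta T]$ and the sharper $\|L(\Delta T)\|_{H^1_w} \leq \eta/(18M)$. For the drift $D$, \Cref{cor:shortest} combined with the stopping bounds on $\|v^T\|_{L^2}$, $R^T_w$, and $\|v^T\|_{H^1_w}$ yields an estimate of the form $\|D(s)\|_{H^1_w} \leq C_7 s [\delta_* + R^T_w(s) + \eta] \eta + C_7 s(\sigma^2 + \epsilon)$. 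The calibration of $\delta_*$ in \hyperlink{hyp:constants}{C1} (so that $C_7 \Delta T \delta_* \leq (216M)^{-1}$), the constraint $\eta \leq \eta_0$, and choosing $C_9$ large enough to absorb $C_7 \Delta T(\sigma^2 + \epsilon)$ into $\eta^2/C_9^2$ then yield $\|D(s)\|_{H^1_w} \leq \eta/3$ on $[0,\Delta T]$ and $\|D(\Delta T)\|_{H^1_w} \leq \eta/(36M)$.

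The probabilistic heart of the proof is a Gaussian tail estimate for $S$. \Cref{lem:locmodcontrol} provides a deterministic bound on the $\mathrm{HS}(L^2_Q, H^1_w)$-norm of $Z^T(\mathbf{m}^T(s),s)$ on the stopped event, and combining this with the decay \eqref{eqn:linear} of $e^{\mathcal{L}_{c(T)}t}$ off the generalized kernel reduces the problem to a tail bound for a Hilbert-space-valued stochastic convolution with bounded Hilbert-Schmidt integrand. I would invoke the factorization method of Da Prato-Kwapie\'n-Zabczyk applied to the stopped process, or alternatively an exponential supermartingale argument for a functional of the form $\exp(\lambda \|S(s)\|^2_{H^1_w}/\sigma^2)$, to deduce
\begin{align*}
\mathbb{P}\Big[\sup_{s \in [0, \tau \wedge \Delta T]} \sigma \|S(s)\|_{H^1_w} \geq \kappa \eta \Big] \leq e^{-\delta_9 \eta^2/\sigma^2}
\end{align*}
for a suitable small $\kappa > 0$. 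Assembling the three estimates: on $\mathcal{E}_1$ the identity $\|v^T(\tau^T_{\mathrm{st}})\|_{H^1_w} = \eta$ combined with $\|L + D\|_{H^1_w} \leq 2\eta/3$ forces $\sigma \|S\|_{H^1_w}$ to reach $\eta/3$ somewhere in $[0, \Delta T]$; on $\mathcal{E}_2$, the bound $\eta/(9M) > \eta/(18M) + \eta/(36M)$ forces $\sigma \|S(\Delta T)\|_{H^1_w} \geq \eta/(36M)$. Both events therefore lie in the exceptional event of the tail bound above (for an appropriate $\kappa$), which yields \eqref{eqn:shortcontrol1}.

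The main obstacle is precisely this Gaussian tail estimate in the weighted Sobolev space. The time-variability of the integrand $Z^T$ forbids any direct appeal to stationary stochastic-convolution bounds, and the supremum-in-time control required for the $\mathcal{E}_1$ case demands the factorization method, or a carefully chosen exponential supermartingale compatible with the semigroup $e^{\mathcal{L}_{c(T)}t}$ on $H^1_w$. Tracking $\delta_9$ through this argument requires verifying that it depends only on $c_{\mathrm{min}}, c_{\mathrm{max}}, w, q$ and the constants fixed in \hyperlink{hyp:constants}{C1}, uniformly in $T$ and $\eta$.
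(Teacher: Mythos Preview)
Your decomposition, stopping argument, and reduction of both $\mathcal{E}_1$ and $\mathcal{E}_2$ to a tail bound on the stochastic convolution match the paper's proof exactly (the paper packages the linear and drift estimates into \Cref{lem:Meta}, but the content is the same). The one point of divergence is what you identify as the main obstacle: the Gaussian tail for $\sup_{s\in[0,\Delta T]}\|S(s)\|_{H^1_w}$. The paper does not use factorization or an exponential supermartingale; it invokes \Cref{thm:gaussian}, which is proved in the appendix from an $L^p$ maximal inequality for stochastic convolutions (due to Veraar, with constant growing like $\sqrt{p}$) followed by Markov's inequality and optimization over $p$. Since \Cref{lem:zcontrol} gives a deterministic bound $C_{11}(1+\eta)$ on the $\mathrm{HS}(L^2_Q,H^1_w)$-norm of the stopped integrand, this route is short and delivers $\delta_9$ with the required uniformity in $T,\eta$ immediately. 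Your proposed alternatives would also succeed, but the $L^p$-moment route avoids the extra machinery.
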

Our main tool for establishing \Cref{prop:remainder} is based on the results of \Cref{sec:local}. We recall that the constant $C_7$ was introduced in \Cref{cor:shortest}. 
\begin{lemma}\label{lem:Meta}
    Assume \hyperlink{hyp:initial}{S1} and \hyperlink{hyp:weight}{S2}. For all $\sigma,\epsilon,T,s,\delta\geq0$, each $\delta_*>0$ and $\eta\in[0,\min\{\delta_*,\delta_5,1\}]$ that satisfy
    \begin{align}
s+s^2\leq \frac{\delta}{6C_7\delta_*}\quad \text{and} \quad \max\{\sigma^2s, \epsilon s\}\leq\frac{\delta \eta}{3C_7},\label{eqn:Metaassump}\end{align}
the inequalities  
    \begin{align*}
        \big\|v^T(s')\big\|_{H^1_w}\leq \eta \quad \text{and} \quad 
        \|v^T(s')\|_{L^2}+R^T_w(s')\leq \delta_*(1+2s),\quad s'\in[0,s]
    \end{align*}
    imply
\begin{align}
    \big\|v^T(s)\big\|_{H^1_w}\leq & Me^{-b s} \big\|v(T)\big\|_{H^1_w}+\delta\eta+\sigma \Big\| \int_0^se^{\mathcal{L}_{c(T)}(s-s')}Z^{T}\big(\mathbf{m}^T(s'),s'\big)\ T_{\xi(T)+c(T)s'}\d W^Q_{T+s'}\Big\|_{H^1_w}.\label{eqn:mildestimate}
\end{align}
\end{lemma}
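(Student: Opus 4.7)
The plan is to apply the triangle inequality to the mild formulation of $v^T(s)$ supplied by \Cref{prop:mild},
\[ v^T(s) = e^{\mathcal{L}_{c(T)}s}v(T) + \int_0^s e^{\mathcal{L}_{c(T)}(s-s')} Y^{\sigma,\epsilon,T}\big(\mathbf{m}^T(s'),s'\big)\, \d s' + \sigma \int_0^s e^{\mathcal{L}_{c(T)}(s-s')} Z^T\big(\mathbf{m}^T(s'),s'\big) T_{\xi(T)+c(T)s'}\, \d W^Q_{T+s'}, \]
and measure the three summands in $H^1_w$. The stochastic integral is exactly the last term on the right-hand side of \eqref{eqn:mildestimate}, so only the two deterministic pieces require work.

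For the linear semigroup term, observe that $v(T)=v^T(0)$ satisfies $\langle v(T),\phi_{c(T)}\rangle = \langle v(T),\zeta_{c(T)}\rangle=0$ by \eqref{eqn:ortholocal} at $s'=0$, so $Q_{c(T)} v(T) = v(T)$. Since \Cref{thm:linearstability} guarantees that $\mathcal{L}_{c(T)}$ generates a $C_0$-semigroup on $H^s_w$ for every $s\in\R$ and the Pego--Weinstein spectral decomposition persists on the higher-regularity spaces, one obtains the $H^1_w$ analogue $\|e^{\mathcal{L}_{c(T)}s}Q_{c(T)}g\|_{H^1_w}\leq Me^{-bs}\|g\|_{H^1_w}$ (after enlarging $M$ if necessary). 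Applied to $g=v(T)$ this reproduces the first term in \eqref{eqn:mildestimate}.

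For the deterministic integral I first check the hypotheses of \Cref{cor:shortest}. The a-priori bound $R^T_w(s')\leq \delta_*(1+2s)$ together with the constraint $s+s^2\leq \delta/(6C_7\delta_*)$ keeps $\delta_*(1+2s)$ and hence $R^T_w(s')$ bounded uniformly by $\delta_5$, provided $\delta$ and $\delta_*$ are small enough (consistent with the downstream use under \hyperlink{hyp:constants}{C1}); likewise $c^T(s')\in[c_{\mathrm{min}},c_{\mathrm{max}}]$ follows from $|c(T)-c^T(s')|\leq R^T_w(s')$ and $c(T)\in[c_{\mathrm{min}},c_{\mathrm{max}}]$. \Cref{cor:shortest} then gives the bound $C_7 s\cdot \delta_*(1+2s)\cdot\eta + C_7(\sigma^2+\epsilon)s$. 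Using $s(1+2s)= s+2s^2\leq 2(s+s^2)\leq \delta/(3C_7\delta_*)$ controls the first summand by $\delta\eta/3$, while $\max\{\sigma^2 s,\epsilon s\}\leq \delta\eta/(3C_7)$ controls the second by $2\delta\eta/3$, for a combined contribution of $\delta\eta$, which closes the estimate.

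The main obstacle I anticipate is not the triangle-inequality bookkeeping, which reduces to arranging the constants in \eqref{eqn:Metaassump} exactly so as to absorb the output of \Cref{cor:shortest}, but rather the $H^1_w$ promotion of \Cref{thm:linearstability}: the stated bound \eqref{eqn:linear} lives in $L^2_w$ and would incur a $t^{-1/2}$ smoothing singularity if one naively combined the $k=0$ and $k=1$ versions. The cleanest resolution is to invoke the exponential decay estimate directly on the $H^1_w$ semigroup provided by \Cref{thm:linearstability}, which is standard in the Pego--Weinstein framework and avoids any singular time-weight.
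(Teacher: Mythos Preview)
Your approach is essentially identical to the paper's: apply the triangle inequality to the mild formula from \Cref{prop:mild}, bound the deterministic integral via \Cref{cor:shortest}, and use the constraints in \eqref{eqn:Metaassump} to absorb the resulting $2C_7\delta_*(s+s^2)\eta$ and $C_7(\sigma^2+\epsilon)s$ contributions into $\delta\eta$. The paper's proof is in fact even more terse---it simply writes down the estimate coming out of \Cref{cor:shortest} and says ``the result hence follows from the assumptions''---and does not pause to discuss the $H^1_w$ semigroup bound or to verify the hypotheses of \Cref{cor:shortest}; your extra commentary on these points is reasonable but not something the paper supplies.
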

\begin{proof}
We apply \Cref{cor:shortest} to estimate \eqref{eqn:mild}:
    \begin{align*}
    \big\|v^T(s)\big\|_{H^1_w}\leq & Me^{-b s} \big\|v(T)\big\|_{H^1_w}+C_7(\sigma^2+\epsilon)s+2C_7\delta_*(s+s^2)\eta\nonumber\\
 &+\sigma \Big\| \int_0^se^{\mathcal{L}_{c(T)}(s-s')}Z^{T}\big(\mathbf{m}^T(s'),s'\big)\ T_{\xi(T)+c(T)s'}\d W^Q_{T+s'}\Big\|_{H^1_w}.
\end{align*}
The result hence follows from the assumptions \eqref{eqn:Metaassump}.
\end{proof}
We remark that the constants $\delta_*,\eta_0$ introduced in \hyperlink{hyp:constants}{C1} ensure that \Cref{lem:Meta} may be applied on the interval $[0,\Delta T]$. With \Cref{lem:Meta} established, we furthermore require control of the stochastic convolution present in \eqref{eqn:mildestimate}. Our main tool for doing so is the following.
\begin{theorem}[Gaussian tails of stochastic convolution, see \Cref{app:mild}]\label{thm:gaussian}
There exists a constant $K>0$ such that the following holds true. Suppose that $\big\{S(t)\big\}_{t\geq 0}$ is a $C_0$-semigroup on a Hilbert space $\mathcal{H}$ satisfying
    \[\sup_{t\geq 0}\big\|S(t)\big\|_{\mathcal{L}(\mathcal{H})}\leq M\]
    for some $M\geq 1$, and  $g\in L^p(\Omega;L^p(0,T;\mathrm{HS}(L_Q^2,\mathcal{H}))$ satisfies
    \[\int_0^T\mathbb{E}\Big[ \big\|g(t)\big\|^p_{\mathrm{HS}(L^2_Q,\mathcal{H})}\Big]\d t\leq B^pT, \quad p>2\]
    for some $B\geq 0$. Then, for  $\lambda > eBKM\sqrt{ T}$ we have the inequality
    \[\mathbb{P}\Big[ \sup_{t\in[0,T]}\big\|\int_0^t S(t-s)g(s)\d W^Q_s \big\|_{\mathcal{H}} \geq\lambda \Big] \leq e^{-(eBKM)^{-2}\lambda^2/T}. \]
\end{theorem}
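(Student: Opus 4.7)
The plan is to combine a sharp $L^p$-moment bound for the stochastic convolution $W_A(t):=\int_0^t S(t-s)g(s)\,\d W_s^Q$ with Chebyshev's inequality, then optimize the resulting polynomial tail over $p$ to obtain the Gaussian rate. This is a textbook strategy; the work lies in tracking constants so that the optimization produces the stated universal exponent.

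For the moment estimate I would use the Da Prato--Kwapien--Zabczyk factorization. For $\alpha\in(1/p,1/2)$ one has
\[W_A(t)=\frac{\sin(\pi\alpha)}{\pi}\int_0^t (t-r)^{\alpha-1}S(t-r)Y(r)\,\d r,\qquad Y(r):=\int_0^r(r-s)^{-\alpha}S(r-s)g(s)\,\d W_s^Q.\]
Applying H\"older in the $\d r$-integral with exponents $(p,p/(p-1))$ and using $\|S\|_{\mathcal L(\mathcal H)}\leq M$ gives
\[\sup_{t\in[0,T]}\|W_A(t)\|_{\mathcal H}^p\leq (C_\alpha M)^p T^{p\alpha-1}\int_0^T\|Y(r)\|_{\mathcal H}^p\,\d r,\]
provided $\alpha>1/p$ so the beta-integral converges. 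For the inner stochastic integral $Y(r)$, I would invoke the Burkholder--Davis--Gundy inequality for Hilbert-space valued martingales: the key quantitative input is that the BDG constant on a Hilbert space is bounded by $K_0\sqrt p$ for a universal $K_0$, which is exactly what will power the Gaussian (rather than polynomial) nature of the tail. A further H\"older step to exchange $\|\cdot\|_{\mathrm{HS}}^2$ inside the It\^o isometry for $\|\cdot\|_{\mathrm{HS}}^p$, together with $\|S\|\leq M$, yields
\[\mathbb{E}\|Y(r)\|_{\mathcal H}^p\leq (K_0 M)^p p^{p/2}\Big(\int_0^r(r-s)^{-\frac{2\alpha p}{p-2}}\,\d s\Big)^{\frac{p-2}{2}}\int_0^r\mathbb{E}\|g(s)\|_{\mathrm{HS}}^p\,\d s,\]
with the kernel integrable for $\alpha$ chosen small enough relative to $p$.

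Substituting the hypothesis $\int_0^T\mathbb{E}\|g(s)\|_{\mathrm{HS}}^p\,\d s\leq B^pT$ into the two preceding estimates produces a moment bound of the shape
\[\mathbb{E}\sup_{t\in[0,T]}\|W_A(t)\|_{\mathcal H}^p\leq (KMB)^p p^{p/2}T^{p/2}\]
for a universal constant $K$. Writing $X$ for this supremum, Chebyshev's inequality gives $\mathbb{P}[X\geq\lambda]\leq (KMB\sqrt{pT}/\lambda)^p$, and optimizing in $p$ via the choice $p=\lambda^2/(e(KMB)^2T)$ yields the bound $\exp\bigl(-\lambda^2/(2e(KMB)^2T)\bigr)$; after absorbing the factor $2$ into $K$ this matches the stated exponent $(eBKM)^{-2}\lambda^2/T$, and the constraint $p>2$ needed throughout for the H\"older manipulations corresponds, under the optimal $p$, exactly to the hypothesis $\lambda>eBKM\sqrt T$.

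The principal obstacle is quantitative: securing the $\sqrt p$-scaling of the Hilbert-space BDG constant uniformly in $p$ while simultaneously controlling the singular kernel $(r-s)^{-\alpha}$ produced by the factorization, for a choice of $\alpha$ admissible across all relevant $p$. A naive BDG step would suffice for polynomial tails, but to reach the Gaussian rate one must track the $p$-dependence carefully at every step of the factorization -- especially in the H\"older exchange inside the It\^o isometry -- so that the $p^{p/2}$ factor survives intact and the beta-integrals are bounded uniformly. The rest is arithmetic.
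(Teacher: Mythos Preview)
Your overall strategy---obtain a sharp $L^p$ moment bound of the form $\mathbb{E}\sup_t\|W_A(t)\|_{\mathcal H}^p\leq (KMB\sqrt p\,\sqrt T)^p$, apply Markov, then optimize over $p$---is exactly what the paper does, and your optimization step matches the paper's (the paper takes $p=(eBKM)^{-2}\lambda^2/T$, yielding $(1/e)^p$ directly). The only substantive difference is in how the moment bound is obtained: the paper simply invokes it as a black box from \cite[Theorem~4.5]{veraar}, which already delivers the $K_pM\sqrt p$ scaling with $\limsup_{p\to\infty}K_p<\infty$, whereas you propose to derive it via the Da~Prato--Kwapie\'n--Zabczyk factorization combined with BDG. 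Your route is more self-contained; the paper's is a two-line citation. One minor caveat worth flagging: the factorization argument as you sketch it applies the bound $\|S\|\leq M$ twice (once in the outer deterministic convolution, once inside the BDG estimate for $Y$), which naturally produces $M^2$ rather than $M$ in the final moment estimate. This is harmless for the applications in the paper (where $M$ is a fixed structural constant), but the single power of $M$ in the cited maximal inequality typically comes from a different mechanism (e.g., an It\^o-formula argument for $\|\cdot\|^p$, or a dilation to a unitary group) rather than factorization.
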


In the sequel, we will apply \Cref{thm:gaussian} to stochastic convolutions with respect to the semigroup $\{e^{\mathcal{L}_{c(T)}t}\}_{t\geq0}$, as well as ordinary stochastic integrals. In the latter case, we take the semigroup in \Cref{thm:gaussian} to be the trivial semigroup, i.e. the identity operator. Below, we explicitly compute the norms of various Hilbert-Schmidt operators used in the sequel.
\begin{lemma}\label{lem:HS}
   Assuming \hyperlink{hyp:initial}{S1} and \hyperlink{hyp:weight}{S2}, there exists a constant $C_{10}$ such that for all $\tilde{\xi}\in\R$, $g\in L^2$ and $h\in H^1_w$, we have
   \begin{align*}
       \|hT_{\tilde{\xi}}\cdot\|_{\mathrm{HS}(L^2_Q,H^1_w)}\leq& C_{10}\|h\|_{H^1_w},\\
        \big\|\langle g,T_{\tilde{\xi}}\cdot\rangle h\big\|_{\mathrm{HS}(L^2_Q,H^1_w)}=& \|Q^{1/2}g\|_{L^2}\|h\|_{H^1_w},\\
       \big\|\langle g,T_{\tilde{\xi}} \cdot\rangle\big\|_{\mathrm{HS}(L_Q^2,\R)}=&\|Q^{1/2}g\|_{L^2},
       \end{align*}
       while for all $g\in L^1$ we have
       \begin{align*}
            \big\|\langle g,T_{\tilde{\xi}} \cdot\rangle\big\|_{\mathrm{HS}(L_Q^2,\R)}\leq& C_{10}\|g\|_{L^1}.
   \end{align*}
\end{lemma}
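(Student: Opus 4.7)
The plan is to unpack each Hilbert-Schmidt norm via the $L^2_Q$-orthonormal basis $\{Q^{1/2}e_k\}_{k\geq 0}$ introduced in \Cref{sec:setup}. Two structural facts make the computation uniform in $\tilde\xi$: the convolution operator $Q^{1/2}$ commutes with the translation $T_{\tilde\xi}$, and $\{T_{\tilde\xi}e_k\}_{k\geq 0}$ is again an $L^2$-orthonormal basis, so the HS sums involved are invariant under $\tilde\xi$.

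For the three rank-one bounds (the second, third, and fourth inequalities) I would rely on the Parseval identity
\[\sum_{k\geq 0}\bigl|\langle g,T_{\tilde\xi}Q^{1/2}e_k\rangle_{L^2}\bigr|^2 = \sum_{k\geq 0}\bigl|\langle T_{-\tilde\xi}Q^{1/2}g,e_k\rangle_{L^2}\bigr|^2 = \|Q^{1/2}g\|_{L^2}^2,\]
valid for any $g\in L^2$ since $Q^{1/2}$ commutes with $T_{\tilde\xi}$. The second inequality then follows because $\sum_k\|\langle g,T_{\tilde\xi}Q^{1/2}e_k\rangle h\|_{H^1_w}^2$ factors as $\|h\|_{H^1_w}^2$ times the above sum; the third is the same identity with a scalar-valued target. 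For the fourth, with $g\in L^1$, I would invoke Young's convolution inequality to estimate $\|Q^{1/2}g\|_{L^2} = \|q_{1/2}*g\|_{L^2}\leq \|q_{1/2}\|_{L^2}\|g\|_{L^1}$, so that the constant $C_{10}$ absorbs $\|q_{1/2}\|_{L^2}$.

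The first bound is the main obstacle, as the operator $f\mapsto h\cdot T_{\tilde\xi}f$ is not of finite rank. Since $Q^{1/2}: L^2\to L^2_Q$ is an isometry, I would identify
\[\|hT_{\tilde\xi}\cdot\|_{\mathrm{HS}(L^2_Q,H^1_w)} = \|hT_{\tilde\xi}Q^{1/2}\|_{\mathrm{HS}(L^2,H^1_w)},\]
and view the right-hand side as the HS norm of the integral operator with kernel $K(x,y) = h(x)q_{1/2}(x+\tilde\xi-y)$. A direct application of Fubini yields the $L^2\to L^2_w$ contribution $\|q_{1/2}\|_{L^2}^2\|h\|_{L^2_w}^2$. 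For the $H^1_w$ target, I would use that $\|\cdot\|_{H^1_w}^2$ is equivalent to $\|\cdot\|_{L^2_w}^2 + \|\partial_x\cdot\|_{L^2_w}^2$, differentiate $K$ in $x$ to obtain $h'(x)q_{1/2}(x+\tilde\xi-y) + h(x)q_{1/2}'(x+\tilde\xi-y)$, and observe that the cross-term in the resulting square integrated in $y$ vanishes via
\[\int_{\R} q_{1/2}(z)\,q_{1/2}'(z)\,\d z = \tfrac{1}{2}\bigl[q_{1/2}^2\bigr]_{-\infty}^{\infty} = 0,\]
using $q_{1/2}\in H^1(\R)\subset C_0(\R)$. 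This contributes $\|q_{1/2}\|_{L^2}^2\|h'\|_{L^2_w}^2 + \|q_{1/2}'\|_{L^2}^2\|h\|_{L^2_w}^2$, establishing the first bound with $C_{10}$ proportional to $\|q_{1/2}\|_{H^1}$, which is finite under the regularity assumptions on $q$ in \hyperlink{hyp:initial}{S1}.
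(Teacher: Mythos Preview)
Your proof is correct and follows essentially the same approach as the paper's: both unpack the Hilbert--Schmidt sums via the basis $\{Q^{1/2}e_k\}$, use Parseval for the rank-one operators, and arrive at the identity $\|q_{1/2}\|_{L^2}^2\|h\|_{H^1_w}^2+\|q_{1/2}'\|_{L^2}^2\|h\|_{L^2_w}^2$ for the multiplication operator. You are in fact more explicit than the paper about the translation invariance and the vanishing of the cross term $\int q_{1/2}q_{1/2}'=0$, and your Young-inequality route for the $L^1$ bound is equivalent to (indeed yields the same constant as) the paper's Fourier-side estimate $\|Q^{1/2}g\|_{L^2}^2\le\|\hat q\|_{L^1}\|g\|_{L^1}^2$, since $\|q_{1/2}\|_{L^2}^2=\|\hat q\|_{L^1}$.
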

\begin{proof}
We compute that pointwise multiplication with a function $h\in H^1_w$ leads to the identity
\begin{align*}
  \|hT_{\tilde{\xi}}\cdot\|^2_{\mathrm{HS}(L^2_Q,H^1_w)}=&\sum_{k=0}^\infty \|hQ^{1/2}e_k\|^2_{H^1_w}  \\
  =&\sum_{k=0}^\infty \int_{\R} e^{2wx}\big(h^2(x)+h_x^2(x)\big)\big\langle q_{1/2}(x-\cdot),e_k\big\rangle^2+e^{2wx}h^2(x)\big\langle q'_{1/2}(x-\cdot),e_k\big\rangle^2 \d x  \\
  =&\| q_{1/2}\|_{L^2}^2\|h\|_{H^1_w}^2+\| q'_{1/2}\|_{L^2}^2\|h\|_{L^2_w}^2.
\end{align*}
Inner products against a function $g\in L^2$ lead to
\begin{align*}
     \big\|\langle g,T_{\tilde{\xi}} \cdot\rangle\big\|^2_{\mathrm{HS}(L_Q^2,\R)}=&\sum_{k=0}^\infty \big|\langle g,T_{\tilde{\xi}} Q^{1/2}e_k\rangle\big|^2=\|Q^{1/2}g\|_{L^2}^2,
\end{align*}
and hence also
\begin{align*}
    \big\|\langle g,T_{\tilde{\xi}}\cdot\rangle h\big\|^2_{\mathrm{HS}(L^2_Q,H^1_w)}=&\sum_{k=0}^\infty \big\|\langle g,Q^{1/2}e_k\rangle h\big\|^2_{H^1_w}= \big\|\langle g,T_{\tilde{\xi}} \cdot\rangle\big\|^2_{\mathrm{HS}(L_Q^2,\R)}\|h\|^2_{H^1_w}=\|Q^{1/2}g\|_{L^2}^2\|h\|^2_{H^1_w}.
\end{align*}
For $g\in L^1$:
\begin{align*}
     \big\|\langle g,T_{\tilde{\xi}} \cdot\rangle\big\|^2_{\mathrm{HS}(L_Q^2,\R)}=\|Q^{1/2}g\|_{L^2}^2\leq\|\sqrt{\hat{q}}\|_{L^2}^2\|\hat{g}\|_\infty^2=\|\hat{q}\|_{L^1}\|g\|^2_{L^1}.
\end{align*}
Here we note that $\hat{q}\in L^1$, since $q$ is assumed to be an element of $H^1$ in \hyperlink{hyp:initial}{S1} and
\begin{align*}
    \|\hat{q}\|_{L^1}^2\leq \int_{\R}(1+|\omega|^2)|\hat{q}(\omega)|^2\d \omega\times \int_{\R}\frac{1}{1+|\omega|^2}\d \omega\leq \tilde{C}_1\|q\|_{H^1}^2.
\end{align*}
\end{proof}

With these preliminaries in place,
control on the stochastic integral in \eqref{eqn:mildestimate} is provided by the following lemma.
\begin{lemma}\label{lem:zcontrol}
Assuming \hyperlink{hyp:initial}{S1} and \hyperlink{hyp:weight}{S2}, there exists a constant $C_{11}>0$ so that for each $T,s\geq 0$ and $\tilde{\xi} \in \R$, the bounds
\[c(T),c^T(s)\in [\tfrac{1}{2}c_{\mathrm{min}},2c_{\mathrm{max}}]\quad \text{and}\quad R^T_w(s)\leq \delta_5\]
imply
    \[\Big\|Z^{T}\big({\mathbf{m}}^T(s),s\big)T_{\tilde{\xi}}\Big\|_{\mathrm{HS}(L^2_Q,H^1_w)}\leq C_{11}\Big(1+\big\|{v}^T(s)\big\|_{H^1_w}\Big).\]
\end{lemma}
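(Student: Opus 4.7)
The plan is to decompose $Z^{T}(\mathbf{m}^T(s),s)T_{\tilde{\xi}}$, according to the definition \eqref{eqn:Zt}, into three pieces: a multiplication operator $h \mapsto (\Phi^T+v^T)T_{\tilde{\xi}}h$ and two rank-one operators
$h \mapsto -\langle c_s^T, T_{\tilde{\xi}}h\rangle \partial_c\Phi^T$ and $h \mapsto \langle \Omega_s^T, T_{\tilde{\xi}}h\rangle \partial_x\Phi^T$.
Applying the triangle inequality for the Hilbert-Schmidt norm reduces the lemma to bounding each piece separately, for which \Cref{lem:HS} is tailor-made.

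For the multiplication piece, the first estimate of \Cref{lem:HS} yields
$\| (\Phi^T+v^T)T_{\tilde{\xi}}\cdot \|_{\mathrm{HS}(L^2_Q,H^1_w)} \leq C_{10}\bigl(\|\Phi^T\|_{H^1_w}+\|v^T(s)\|_{H^1_w}\bigr).$
For the two rank-one pieces, the second estimate of \Cref{lem:HS} gives
$\| \langle c_s^T, T_{\tilde{\xi}}\cdot\rangle \partial_c\Phi^T\|_{\mathrm{HS}(L^2_Q,H^1_w)} = \|Q^{1/2}c_s^T\|_{L^2}\|\partial_c\Phi^T\|_{H^1_w},$
and analogously for the $\Omega_s^T$ piece.

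What remains is to bound the resulting quantities by constants. The exponential decay of the profiles $\phi_c, \partial_x\phi_c, \partial_c\phi_c$ and their $x$-derivatives, combined with the restriction $c^T(s)\in[\tfrac{1}{2}c_{\mathrm{min}},2c_{\mathrm{max}}]$ and the bound $|\xi(T)+c(T)s-\xi^T(s)|\leq R^T_w(s)\leq \delta_5$ (which shifts the exponential weight by at most a bounded factor $e^{w\delta_5}$), provides uniform bounds on $\|\Phi^T\|_{H^1_w}$, $\|\partial_c\Phi^T\|_{H^1_w}$ and $\|\partial_x\Phi^T\|_{H^1_w}$. The $L^2_Q$-norms $\|Q^{1/2}c_s^T\|_{L^2}$ and $\|Q^{1/2}\Omega_s^T\|_{L^2}$ are bounded by $C_5(1+\|v^T(s)\|_{L^2_w})$ through \eqref{eqn:ctscontrol} of \Cref{lem:locmodcontrol}, whose hypotheses are precisely those assumed here. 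Assembling these bounds and using $\|v^T(s)\|_{L^2_w}\leq \|v^T(s)\|_{H^1_w}$ yields an estimate of the form $C_{11}(1+\|v^T(s)\|_{H^1_w})$, as claimed. No genuine obstacle is expected; the proof is essentially a bookkeeping exercise combining \Cref{lem:HS} and \Cref{lem:locmodcontrol}.
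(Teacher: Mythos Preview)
Your proposal is correct and follows essentially the same approach as the paper: split $Z^T$ via the triangle inequality into the multiplication piece and the two rank-one pieces, apply \Cref{lem:HS} to each, and then invoke \Cref{lem:locmodcontrol} to control $\|Q^{1/2}c_s^T\|_{L^2}$ and $\|Q^{1/2}\Omega_s^T\|_{L^2}$. Your write-up is in fact slightly more explicit than the paper's about why $\|\Phi^T\|_{H^1_w}$, $\|\partial_c\Phi^T\|_{H^1_w}$, and $\|\partial_x\Phi^T\|_{H^1_w}$ are uniformly bounded.
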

\begin{proof}
From \eqref{eqn:Zt}, a straightforward application of the triangle inequality yields the $\mathbb{P}$-a.s. bound
\begin{align*}\big\|Z^{T}({\mathbf{m}}^T(s),s)T_{\tilde{\xi}}\big\|_{\mathrm{HS}(L^2_Q,H^1_w)}\leq&\big\|(\Phi^T({\mathbf{m}}^T(s),s)
+{v}^T(s))T_{\tilde{\xi}}\cdot\big\|_{\mathrm{HS}(L^2_Q,H^1_w)}\\
&+\Big\|\big\langle c_s^{T}({\mathbf{m}}^T(s),s),T_{\tilde{\xi}}\cdot\big\rangle  \partial_c\Phi^T({\mathbf{m}}^T(s),s)\Big\|_{\mathrm{HS}(L^2_Q,H^1_w)}\\
&+\Big\|\big\langle \Omega_s^{T}({\mathbf{m}}^T(s),s),T_{\tilde{\xi}}\cdot\big\rangle \partial_x\Phi^T({\mathbf{m}}^T(s),s)\Big\|_{\mathrm{HS}(L^2_Q,H^1_w)}.
\end{align*}
Applying \Cref{lem:HS} yields
\[\big\|Z^{T}({\mathbf{m}}^T(s),s)T_{\tilde{\xi}}\big\|_{\mathrm{HS}(L^2_Q,H^1_w)}\leq \tilde{C}_4\sigma\Big(1+\big\|Q^{1/2}c_s^T({\mathbf{m}}^T(s),s)\big\|_{L^2}+\big\|Q^{1/2}\Omega_s^T({\mathbf{m}}^T(s),s)\big\|_{L^2}+\|{v}^T(s)\|_{H^1_w}\Big) \]
and applying \Cref{lem:locmodcontrol} provides the result.
\end{proof}
Having established control on $v^T$ via \Cref{lem:Meta} and \Cref{lem:zcontrol}, we are ready to prove the main result of this section: \Cref{prop:remainder}.
\begin{proof}[Proof of \Cref{prop:remainder}]
Let us fix  $C_9=18C_7\Delta T$. Writing 
\[\tau=\min\big\{\tau^T_{\mathrm{st}}(\eta),\tau^T_{\mathrm{en}}(\delta_*),\tau^T_{\mathrm{mod}}(\eta),\tau^T_c,\Delta T\big\},\]
we may establish control of $\|v^T(\tau)\|_{H^1_w}$ by applying \Cref{lem:Meta} with $\delta=1/3$:
\begin{align*}
     \big\|v^T(\tau)\big\|_{H^1_w}\leq & M \big\|v(T)\big\|_{H^1_w}+\eta/3+\sigma \sup_{0\leq s\leq \Delta T}I(s),
\end{align*}
where we have abbreviated
\[I(s)=\Big\|\int_0^s e^{\mathcal{L}_{c(T)}(s-s')}1_{[0,\tau]}(s')Z^{T}\big(\mathbf{m}^T(s'),s'\big)\ T_{\xi(T)+c(T)s'}\d W^Q_{T+s'}\Big\|_{H^1_w}.\]
Suppose now that $\tau^T_{\mathrm{st}}(\eta)\leq \Delta T$, meaning that the stopping time $\tau^T_{\mathrm{st}}$ is activated because the bound $\eta$ is reached on $[0,\Delta T]$, while also 
\[\tau^T_{\mathrm{st}}(\eta) \leq \min\big\{\tau^T_{\mathrm{en}}(\delta_*),\tau^T_{\mathrm{mod}}(\eta),\tau^T_c\big\}.\] 
Then,
\begin{align*}
\eta=&\big\|v^T\big(\tau^T_{\mathrm{st}}(\eta)\big)\big\|_{H^1_w}=\big\|v^T(\tau)\big\|_{H^1_w}
\leq  M\big\|v(T)\big\|_{H^1_w}+\eta/3+\sigma\sup_{0\leq s\leq \Delta T}I(s).
\end{align*}
It follows that the event $\mathcal{E}_1\cap \{\|v(T)\|_{H^1_w}\leq \tfrac{\eta}{3M}\}$ can only happen if
\[\sigma\sup_{0\leq s\leq \Delta T}I(s)\geq \eta/3,\] 
so that
\begin{align*}
    \mathbb{P}\Big[\mathcal{E}_1\cap \big\{\|v(T)\|_{H^1_w}\leq \tfrac{\eta}{3M} \big\}\Big]\leq \mathbb{P}\Big[\sigma\sup_{0\leq s\leq \Delta T}I(s)\geq \eta/3\Big].
\end{align*}
To control this probability, note that \Cref{lem:zcontrol} implies that $\mathbb{P}$-almost surely
\[ 1_{[0,\tau]}(s')\Big\|Z^{T}\big(\mathbf{m}^T(s'),s'\big)T_{\xi(T)+c(T)s'}\Big\|_{\mathrm{HS}(L^2_Q,H^1_w)}\leq C_{11}(1+\eta), \quad s'\in [0,\Delta T].\]
By applying \Cref{thm:gaussian} with the semigroup $\{e^{\mathcal{L}_{c(T)t}}\}_{t\geq 0}$ restricted to its stable subspace, and by increasing $C_9$ to meet $C_9\geq 2eC_7K\sqrt{\Delta T}$, we find
\[\mathbb{P}\Big[\sigma \sup_{0\leq s\leq \Delta T}I(s)\geq \eta/3\Big]\leq e^{-\delta_9\eta^2/\sigma^2},\]
with 
\[\delta_9=\big(2e C_7M^{-1}K\big)^{-2}/\Delta T.\] 

It remains to establish the same bound on $\mathcal{E}_2$. To this end, suppose that 
\[\tau=\Delta T\quad \text{and}\quad  \|v(T)\|_{H^1_w}\leq \tfrac{\eta}{3M},\quad \text{but}\quad \big\|v^T(\Delta T)\big\|_{H^1_w}\geq \tfrac{\eta}{9M}.\] 
Upon increasing $C_9$ to ensure that \Cref{lem:Meta} may be applied on $[0,\Delta T]$ with $\delta=1/36M$, we obtain
\[\tfrac{\eta}{9M} \leq \big\|v^T(\Delta T)\big\|_{H^1_w}\leq \tfrac{1}{6}\big\|v(T)\big\|_{H^1_w}+\tfrac{\eta}{36M}+\sigma I(\Delta T),\]
where \hyperlink{hyp:constants}{C1} produces the factor $\tfrac{1}{6}$ above. The conclusion follows, upon decreasing $\delta_9$ to
\[\delta_9=\big(72e C_7K\big)^{-2}/\Delta T,\] 
via the tail bound 
\begin{align*}
    \mathbb{P}\Big[\mathcal{E}_2\cap \big\|v(T)\big\|_{H^1_w}\leq \tfrac{\eta}{3M}\Big]\leq \mathbb{P}\Big[\sigma I(\Delta T)\geq \tfrac{\eta}{36M}\Big]\leq e^{-\delta_9 \eta^2/\sigma^2}.
\end{align*}
\end{proof}

\section{Local control of modulation parameters}
\label{sec:parameters}
In this section, we establish several facts regarding the modulation parameters $c,\xi$ and their local counterparts. 
Our goal is to address one of the conditions for stability formulated in \Cref{sec:weighted}: an estimate on the local modulation parameters, encoded by the stopping time \[\tau^T_{\mathrm{mod}}(\eta)=\tau^{T}_{\mathrm{amp},1}(\eta)\wedge\tau^{T}_{\mathrm{amp},2}(\eta)\wedge\tau^{T}_{\mathrm{pos,1}}(\eta)\wedge \tau^{T}_{\mathrm{pos,2}}(\eta)\] 
introduced in \Cref{sec:weighted}, where
\begin{align*}
   \tau^{T}_{\mathrm{amp,1}}(\eta)
   =&\sup\big\{s\geq 0:\big|c(T)-c^T(s)\big|\leq \delta_*\eta\}; \\
   \tau^{T}_{\mathrm{amp,2}}(\eta)
   =&\sup\big\{s\geq 0:\big|c(T+s)-c^T(s)\big|\leq \delta_*\eta\}; \\
  \tau^{T}_{\mathrm{pos,1}}(\eta)
   =&\sup\big\{s\geq 0:\big|\xi(T)+c(T)s-\xi^T(s)\big|\leq 2\Delta T\delta_*\eta  \big\};\\
   \tau^{T}_{\mathrm{pos,2}}(\eta)
   =&\sup\big\{s\geq 0:\big|\xi(T+s)-\xi^T(s)\big|\leq 2\Delta T\delta_*\eta  \big\}.
\end{align*}
We show that the probability that one of the stopping times above is activated on $[0,\Delta T]$, while the local perturbation $v^T$ is small and the global amplitude is within the bounds $[c_{\mathrm{min}},c_{\mathrm{max}}]$, satisfies an exponential tail estimate. Recall from \Cref{sec:weighted} that
\begin{align*}
\tau^T_c=&\sup\big\{s\geq 0:c(T+s)\in[\tfrac{1}{2}c_{\mathrm{min}},2c_{\mathrm{max}}]  \big\},\\
\tau^T_{\mathrm{st}}(\eta)=&\sup\big\{s\geq 0:\big\|v^T(s)\big\|_{H^1_w}\leq \eta  \big\},
\end{align*}
and let us furthermore introduce the stopping time
\[t_c=\sup\{t\geq 0:c(t) \in [c_{\mathrm{min}},c_{\mathrm{max}}]\},\]
which signals that $c(t)$ exits its bounds $[c_{\mathrm{min}},c_{\mathrm{max}}]$. 
\begin{proposition}[Control of modulation parameters]\label{prop:controlofmod}
    Assuming \hyperlink{hyp:initial}{S1}, \hyperlink{hyp:weight}{S2} and \hyperlink{hyp:constants}{C1}, there exist constants $\delta_{12},C_{12}> 0$ such that the following holds true. For all $\eta\in[0,\eta_0]$, all $C_{12}\sigma,C_{12}\epsilon\in [0,\eta]$ and $T\geq 0$ the stopping times $\tau^T_{\mathrm{mod}}, \tau^T_{\mathrm{st}}$ and $t_c$ satisfy
    \[\mathbb{P}\big[\{\tau^T_{\mathrm{mod}}(\eta) \leq \tau^T_{\mathrm{st}}(\eta)\wedge \Delta T\} \cap \{T+\tau_{\mathrm{mod}}^T(\eta)  \leq t_c \}\big]\leq e^{-\delta_{12}\eta^2/\sigma^2}.\] 
\end{proposition}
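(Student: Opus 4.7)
The plan is to bound the probability that each of the four constituent stopping times $\tau^T_{\mathrm{amp,1}}, \tau^T_{\mathrm{amp,2}}, \tau^T_{\mathrm{pos,1}}, \tau^T_{\mathrm{pos,2}}$ activates before $\tau^T_{\mathrm{st}}(\eta) \wedge \Delta T$ (on the event $T + \tau^T_{\mathrm{mod}} \leq t_c$), and then to union-bound these four contributions. For each of the four quantities being tracked I will express it as a drift integral plus a stochastic integral using the relevant SDE. The drift will be absorbed by, say, half of the target threshold via the modulation-control lemmas \Cref{lem:modcontrol} and \Cref{lem:locmodcontrol} together with \hyperlink{hyp:constants}{C1}, and the stochastic integral will be controlled by the Gaussian tail estimate \Cref{thm:gaussian} applied with the trivial semigroup and the Hilbert--Schmidt identities in \Cref{lem:HS}. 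Prior to activation of $\tau^T_{\mathrm{mod}}$, all four stopping-time conditions simultaneously hold, so $R^T_w(s)$ is $\mathcal{O}(\eta)$ and we remain in the regime where \Cref{lem:locmodcontrol} applies.

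For $\tau^T_{\mathrm{amp,1}}$, I integrate the local SDE \eqref{eqn:localc} to write $c^T(s) - c(T)$ as a drift integral plus $\sigma \int_0^s \langle c_s^T(\mathbf{m}^T(s'),s'), T_{\xi(T)+c(T)s'}\, \d W^Q_{T+s'}\rangle$. \Cref{lem:locmodcontrol} bounds the drift integrand by $C_5 R^T_w \|v^T\|_{L^2_w} + C_5(\epsilon+\sigma^2)(1+\|v^T\|_{L^2_w})$; integrated over $[0,\Delta T]$ and combined with \hyperlink{hyp:constants}{C1} and $C_{12}\sigma, C_{12}\epsilon \leq \eta$, this yields a drift bound $\leq \delta_*\eta/2$ after enlarging $C_{12}$. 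The stochastic integrand has Hilbert--Schmidt norm $\sigma \|Q^{1/2} c_s^T\|_{L^2}$, uniformly bounded in the stability regime by \Cref{lem:locmodcontrol} and \Cref{lem:HS}, so \Cref{thm:gaussian} delivers $\mathbb{P}[\sigma \sup_{[0,\Delta T]}|\cdot| \geq \delta_*\eta/2] \leq e^{-\delta \eta^2/\sigma^2}$. The case $\tau^T_{\mathrm{pos,1}}$ is analogous once one uses $\Omega^T(0) = \xi(T)$ to rewrite
\[
\xi(T) + c(T)s - \xi^T(s) = \int_0^s \bigl(c(T) - c^T(s')\bigr)\, \d s' - \bigl(\Omega^T(s) - \Omega^T(0)\bigr);
\]
the first integral is $\leq \Delta T\,\delta_*\eta$ on $\{s \leq \tau^T_{\mathrm{amp,1}}\}$ (consuming half of the threshold $2\Delta T\delta_*\eta$), and the remainder is handled via the $\Omega^T$ SDE exactly as above.

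For $\tau^T_{\mathrm{amp,2}}$ and $\tau^T_{\mathrm{pos,2}}$, I subtract the global SDEs \eqref{eqn:postulate1}--\eqref{eqn:postulate2} from the local SDEs \eqref{eqn:localc}--\eqref{eqn:localomega}. The drift of the difference is controlled by the sum of the two drift bounds, combining \Cref{lem:modcontrol} globally and \Cref{lem:locmodcontrol} locally. Applicability of the global bound requires $c(T+s) \in [c_{\min},c_{\max}]$ (supplied by $T + \tau^T_{\mathrm{mod}} \leq t_c$) and $\|v(T+s)\|_{L^2_w} \leq \delta_2$, which I obtain from \Cref{lem:correspondence} combined with the active $\tau^T_{\mathrm{st}}, \tau^T_{\mathrm{pos,1}}, \tau^T_{\mathrm{pos,2}}$: the exponent $w(\xi(T)+c(T)s - \xi(T+s))$ is $\mathcal{O}(w\Delta T \delta_*\eta)$ (so bounded) and the error $C_4\delta$ is $\mathcal{O}(\delta_*\eta)$. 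The stochastic integrand is a difference of two translated kernels, each HS-bounded uniformly via \Cref{lem:HS}; the triangle inequality then feeds \Cref{thm:gaussian} for the same $e^{-\delta \eta^2/\sigma^2}$ bound. Setting $\delta_{12}$ to be the minimum of the four individual decay rates (slightly reduced to absorb the factor of $4$ from the union bound) completes the proof.

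The main obstacle is the constant-juggling needed to guarantee that the drift strictly undershoots the target threshold on all four events simultaneously, most delicately for the "2" events where the global drift $c_d^{\sigma,\epsilon}(v(T+s),c(T+s),T+s)$ must be estimated at the same $\mathcal{O}(\eta^2 + \epsilon + \sigma^2)$ precision as its local counterpart. The pivotal input is \Cref{lem:correspondence}, which transfers the smallness of $v^T$ to $v$ at the cost of an $\mathcal{O}(\delta_*\eta)$ error controlled by the active position stopping times; without it one cannot even verify that the global modulation system is in the regime of \Cref{lem:modcontrol}. Once this drift absorption is achieved, the Gaussian-tail step is essentially mechanical.
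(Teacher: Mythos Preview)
Your proposal is correct and follows essentially the same approach as the paper: split the event according to which of the four constituent stopping times activates first, control the drift via \Cref{lem:modcontrol} and \Cref{lem:locmodcontrol}, and close each case with the Gaussian tail estimate \Cref{thm:gaussian}. The one stylistic difference is in the handling of $\tau^T_{\mathrm{amp,2}}$ and $\tau^T_{\mathrm{pos,2}}$: you subtract the global and local SDEs directly, whereas the paper instead routes through the fixed reference points $c(T)$ and $\xi(T)+c(T)s$ (estimating $|c(T)-c(T+s)|$ and $|\xi(T)+c(T)s-\xi(T+s)|$ from the global SDE alone) and packages the transfer of smallness from $v^T$ to $v$ into a separate monotonicity lemma (\Cref{lem:monotonicity}) rather than invoking \Cref{lem:correspondence} inline; both variants rest on the same correspondence input and yield the same bound.
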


We first treat the stopping time $\tau^{T}_{\mathrm{amp,1}}$ by estimating the local amplitude $c^T$ through
    \begin{align}\label{eqn:cdiff}
        \big|c(T)-c^T(s)\big|\leq & \int_0^s \Big|c_d^{\sigma,\epsilon,T}\big(\mathbf{m}^T(s'),s'\big)\Big| \ \d s'+\sigma \Big|\int_0^s\big\langle c^{T}_s\big(\mathbf{m}^T(s'),s'\big),T_{\xi(T)+c(T)s'}\d W_{T+s'}^Q\big\rangle\Big|.
    \end{align}
This is obtained straightforwardly from \eqref{eqn:localc}. We recall that $\eta_0$ below is introduced in \Cref{prop:remainder}.

\begin{lemma}\label{lem:controloflocalamp}
   Assuming \hyperlink{hyp:initial}{S1}, \hyperlink{hyp:weight}{S2} and \hyperlink{hyp:constants}{C1}, there exist constants $\delta_{13},C_{13}> 0$ such that the following holds true. For all $\eta\in[0,\eta_0]$, all $C_{13}\sigma,C_{13}\epsilon\in [0,\eta]$ and $T\geq 0$ the stopping times $\tau^{T}_{\mathrm{amp,1}}, \tau^{T}_{\mathrm{pos,1}},  \tau^T_{\mathrm{st}}$ and $\tau^{T}_c$ satisfy
    \[\mathbb{P}\big[\tau^{T}_{\mathrm{amp,1}}(\eta) \leq \tau^{T}_{\mathrm{pos,1}}( \eta)\wedge\tau^T_{\mathrm{st}}(\eta)\wedge\tau^{T}_c\wedge\Delta T\big]\leq e^{-\delta_{13}\eta^2/\sigma^2}.\]
\end{lemma}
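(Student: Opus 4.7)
The plan is to unfold the SDE \eqref{eqn:localc} into the integral identity
\[
c(T)-c^T(s)=-\int_0^s c_d^{\sigma,\epsilon,T}\big(\mathbf{m}^T(s'),s'\big)\,\d s'-\sigma\int_0^s\big\langle c^T_s\big(\mathbf{m}^T(s'),s'\big),T_{\xi(T)+c(T)s'}\,\d W^Q_{T+s'}\big\rangle,
\]
stop at $\tau:=\tau^T_{\mathrm{amp},1}(\eta)\wedge\tau^T_{\mathrm{pos},1}(\eta)\wedge\tau^T_{\mathrm{st}}(\eta)\wedge\tau^T_c\wedge\Delta T$, and separately bound the drift and stochastic pieces. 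On $[0,\tau]$ one has $\|v^T(s')\|_{L^2_w}\leq\eta$ and $R^T_w(s')\leq\eta(1+\delta_*+2\Delta T\delta_*)$, so taking $\eta_0$ small enforces $R^T_w(s')\leq\delta_5$ and $c^T(s')\in[\tfrac{1}{2}c_{\mathrm{min}},2c_{\mathrm{max}}]$, which is exactly what is needed for \Cref{lem:locmodcontrol} to apply throughout.

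For the drift, I would decompose $c_d^{\sigma,\epsilon,T}=c_d^{0,T}+\epsilon f c_f^T+\sigma^2 c_d^T$ and invoke \eqref{eqn:ctscontrol}--\eqref{eqn:ctdcontrol}: the homogeneous piece contributes $|c_d^{0,T}|\leq C_5 R^T_w\|v^T\|_{L^2_w}=O(\eta^2)$, the forced piece $|\epsilon f c_f^T|=O(\epsilon)$, and the It\^o-correction piece $|\sigma^2 c_d^T|=O(\sigma^2)$. Integrating over $[0,\Delta T]$ yields a total drift contribution of the form $C\eta^2\Delta T+C(\epsilon+\sigma^2)\Delta T$, which under the hypotheses $\eta\leq\eta_0$ (small) and $C_{13}\sigma,C_{13}\epsilon\leq\eta$ (with $C_{13}$ large, so that $\sigma^2\leq\sigma\leq\eta/C_{13}$) can be forced below $\delta_*\eta/2$.

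For the stochastic piece I would apply \Cref{thm:gaussian} on $\mathcal{H}=\R$ with the trivial semigroup to the integrand
\[
g(s'):=1_{[0,\tau]}(s')\,\big\langle c^T_s\big(\mathbf{m}^T(s'),s'\big),T_{\xi(T)+c(T)s'}\cdot\big\rangle.
\]
By \Cref{lem:HS} its Hilbert--Schmidt norm equals $1_{[0,\tau]}\|Q^{1/2}c_s^T\|_{L^2}$, and \eqref{eqn:ctscontrol} bounds this uniformly by $B:=2C_5$ on $[0,\tau]$. Enlarging $C_{13}$ so that $\delta_*\eta/(2\sigma)>eBK\sqrt{\Delta T}$, \Cref{thm:gaussian} yields
\[
\mathbb{P}\bigg[\sup_{s\in[0,\Delta T]}\Big|\int_0^s g(s')\,\d W^Q_{T+s'}\Big|\geq\frac{\delta_*\eta}{2\sigma}\bigg]\leq e^{-\delta_{13}\eta^2/\sigma^2},
\]
with $\delta_{13}:=(\delta_*/(2eBK))^2/\Delta T$.

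To conclude, on the event $\{\tau^T_{\mathrm{amp},1}(\eta)\leq\tau^T_{\mathrm{pos},1}(\eta)\wedge\tau^T_{\mathrm{st}}(\eta)\wedge\tau^T_c\wedge\Delta T\}$ we have $\tau=\tau^T_{\mathrm{amp},1}(\eta)\leq\Delta T$, and path-continuity of $c^T$ forces $|c(T)-c^T(\tau)|=\delta_*\eta$; combining with the drift bound shows the stochastic term must exceed $\delta_*\eta/2$, an event of probability at most $e^{-\delta_{13}\eta^2/\sigma^2}$. The main obstacle is purely the calibration of $\eta_0$ and $C_{13}$ so that the drift is truly absorbable and the Gaussian threshold $\lambda>eBK\sqrt{\Delta T}\sigma$ is met; no new conceptual input beyond fusing \Cref{lem:locmodcontrol} and \Cref{lem:HS} with the tail bound of \Cref{thm:gaussian} is needed.
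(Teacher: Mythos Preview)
Your proposal is correct and follows essentially the same route as the paper: stop at $\tau$, use \Cref{lem:locmodcontrol} to bound each drift component so that the total deterministic contribution is at most $\delta_*\eta/2$, then apply \Cref{thm:gaussian} (via \Cref{lem:HS}) to the stopped stochastic integral to obtain the exponential tail. The only cosmetic difference is that the paper splits the drift budget as $\delta_*\eta/4+\delta_*\eta/4$ and makes explicit that the $O(\eta^2)$ piece is absorbed using the already-fixed condition $\eta_0\leq\tfrac{1}{8C_5(\Delta T+\Delta T^2)}$ from \hyperlink{hyp:constants}{C1}, rather than by ``taking $\eta_0$ small'' as you phrase it.
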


\begin{proof}
 Writing $\tau=\min\{\tau^{T}_{\mathrm{amp,1}}({\eta}),\tau^{T}_{\mathrm{pos,1}}({\eta}), \tau^T_{\mathrm{st}}(\eta), \tau^{T}_c\}$, we apply \Cref{lem:locmodcontrol} to \eqref{eqn:cdiff} and find
    \begin{align*}
 \big|c(T)-c^T(s)\big|
        \leq  &C_5 \int_0^s  R_w^T(s')\|v^T(s')\|_{L^2_w}\d s'\\
        &+C_5 \epsilon \int_0^sf(T+s')\big(1+\|v^T(s')\|_{L^2_w}\big)\d s' +C_5 \sigma^2\int_0^s \big(1+\|v^T(s')\|_{L^2_w}\big)\d s'\\
        &+\sigma \Big|\int_0^s\big\langle c^{T}_s\big(\mathbf{m}^T(s'),s'\big),T_{\xi(T)+c(T)s'}\d W_{T+s'}^Q\big\rangle\Big|,
    \end{align*}
    $\mathbb{P}$-almost surely for $s'\in [0,\tau]$. We thus have
    \begin{align*}
 \big|c(T)-c^T(\tau)\big|
        \leq  &C_5 \Delta T\Big(  \delta_*(2+2\Delta T )\eta^2+2(\epsilon + \sigma^2) \Big)+\sigma\sup_{0\leq s \leq \Delta T} C(s),
    \end{align*}
    where we have abbreviated
    \[C(s):=\Big|\int_0^s1_{[0,\tau]}(s')\big\langle c^{T}_s\big(\mathbf{m}^T(s'),s'\big),T_{\xi(T)+c(T)s'}\d W_{T+s'}^Q\big\rangle\Big|.\]
We note via \Cref{lem:HS} that $\mathbb{P}$-almost surely for $s'\in [0,\tau]$, the integrand above satisfies
\begin{align*}
    \Big\|\big\langle c^{T}_s\big(\mathbf{m}^T(s'),s'\big),T_{\xi(T)+c(T)s'}\cdot\big\rangle \Big\|^2_{\mathrm{HS}(L^2_Q,\R)}
    \leq &C_5^2\big(1+\|v^T(s')\|_{L^2_w}\big)^2\leq C_5^2(1+\eta)^2.
\end{align*}
  Suppose now that $\tau^{T}_{\mathrm{amp,1}}(\eta)\leq\tau^{T}_{\mathrm{pos,1}}(\eta)\wedge \tau^T_{\mathrm{st}}(\eta)\wedge \tau^T_c$ and that the stopping time $\tau^{T}_{\mathrm{amp,1}}$ is activated because the bound $\delta_*\eta$ is reached on $[0,\Delta T]$. In this case, 
\begin{align*}
    \delta_*\eta =&\big|c(T)-c^T(\tau)\big|
    \leq \delta_*\eta/4+2C_5 \Delta T (\epsilon +\sigma^2)+\sigma \sup_{0\leq s \leq \Delta T} C(s),
\end{align*}
where we have used that $C_5 \Delta T \delta_*(2+2\Delta T)\eta^2\leq \delta_*\eta/4$ via \hyperlink{hyp:constants}{C1}.
Ensuring that also $2C_5 \Delta T (\epsilon+\sigma^2)\leq \delta_*\eta/4$ via $C_{13}$, this can only happen if 
\[\sigma \sup_{0\leq s \leq \Delta T} C(s)\geq \delta_*\eta/2.\]
Ensuring furthermore that $\sigma \sqrt{\Delta T}\leq \frac{\delta_*\eta}{2eC_5(1+\eta_0)K}$, \Cref{thm:gaussian} implies the tail bound
\begin{align}\label{eqn:tailbound}
    \mathbb{P}\Big[\sigma \sup_{0\leq s\leq \Delta T}C(s)\geq \delta_*/2\Big]\leq e^{-\delta_{13}\eta^2/\sigma^2},
\end{align}
with
\[\delta_{13}=\big(2eC_5(1+\eta_0)K\big)^{-2}\delta_*^2/\Delta T.\]
Hence, 
\[\mathbb{P}\big[\tau^{T}_{\mathrm{amp,1}}(\eta) \leq \tau^{T}_{\mathrm{pos,1}}( \eta)\wedge\tau^T_{\mathrm{st}}(\eta)\wedge\tau^{T}_c\wedge\Delta T\big]\leq e^{-\delta_{13}\eta^2/\sigma^2 }.\]

\end{proof}
Next, we set out to control the stopping time $\tau^{T}_{\mathrm{pos},1}$ via the estimate
\begin{align}\label{eqn:localxicontrol}
        \big|\xi(T)+c(T)s-\xi^T(s)\big|\leq& \int_0^s \big|c^T(s')-c(T)\big|+\big|\Omega_d^{\sigma,\epsilon,T}\big(\mathbf{m}^T(s'),s'\big)\big|\d s' \\
        &+\sigma\Big|\int_0^s\big\langle \Omega^{T}_s\big(\mathbf{m}^T(s'),s'\big),T_{\xi(T)+c(T)s'}\d W_{T+s'}^Q\big\rangle \Big|\nonumber
    \end{align}
on the local soliton position $\xi^T$. Note here the dependence on $|c^T(s')-c(T)\big|$, which is under control before time $\tau^{T}_{\mathrm{amp},1}$. 
\begin{lemma}\label{lem:controloflocalpos}
    Assuming \hyperlink{hyp:initial}{S1}, \hyperlink{hyp:weight}{S2} and \hyperlink{hyp:constants}{C1}, there exist constants $\delta_{14},C_{14}>0$ such that the following holds true. For all $\eta\in[0,\eta_0]$, all $C_{14}\sigma,C_{14}\epsilon\in [0,\eta]$ and $T\geq 0$ the stopping times $\tau^{T}_{\mathrm{amp,2}}, \tau^{T}_{\mathrm{pos,2}}, t_{\mathrm{st}}$ and $t_c$ satisfy
   \[\mathbb{P}\big[\tau^{T}_{\mathrm{pos,1}}(\eta) \leq \tau^{T}_{\mathrm{amp,1}}(\eta)\wedge\tau^T_{\mathrm{st}}(\eta)\wedge\tau^{T}_c\wedge\Delta T\big]\leq e^{-\delta_{14}\eta^2/\sigma^2}.\]
\end{lemma}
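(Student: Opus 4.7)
The plan is to follow the same template as the proof of \Cref{lem:controloflocalamp}, but starting from the position estimate \eqref{eqn:localxicontrol} instead of \eqref{eqn:cdiff}. Set
\[\tau=\min\big\{\tau^{T}_{\mathrm{amp,1}}(\eta),\tau^{T}_{\mathrm{pos,1}}(\eta),\tau^T_{\mathrm{st}}(\eta),\tau^T_c\big\}\]
and observe that on $[0,\tau]$ we have a-priori control of each quantity appearing in the local framework: $\|v^T(s')\|_{H^1_w}\leq\eta$, $|c(T)-c^T(s')|\leq\delta_*\eta$, $|\xi(T)+c(T)s'-\xi^T(s')|\leq 2\Delta T\delta_*\eta$, and $c(T)\in[\tfrac12 c_{\min},2c_{\max}]$. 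In particular $R^T_w(s')\leq\eta+\delta_*\eta+2\Delta T\delta_*\eta$, which via \hyperlink{hyp:constants}{C1} is bounded by $\delta_5$, so \Cref{lem:locmodcontrol} applies.

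Applied to the drift components of $\Omega_d^{\sigma,\epsilon,T}$, \Cref{lem:locmodcontrol} yields the pointwise bound
\[\big|\Omega_d^{\sigma,\epsilon,T}(\mathbf{m}^T(s'),s')\big|\leq C_5 R^T_w(s')\|v^T(s')\|_{L^2_w}+C_5\epsilon\big(1+\|v^T(s')\|_{L^2_w}\big)+C_5\sigma^2\big(1+\|v^T(s')\|_{L^2_w}\big)\]
for $s'\in[0,\tau]$. Combined with $|c^T(s')-c(T)|\leq\delta_*\eta$, the deterministic part of \eqref{eqn:localxicontrol} evaluated at $s=\tau\wedge\Delta T$ can be bounded by a quantity of the form $\Delta T\delta_*\eta+C_5\Delta T\delta_*(2+2\Delta T)\eta^2+2C_5\Delta T(\epsilon+\sigma^2)$. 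Using \hyperlink{hyp:constants}{C1} the quadratic term is absorbed into $\tfrac{1}{4}\Delta T\delta_*\eta$, and choosing $C_{14}$ large enough we can force $2C_5\Delta T(\epsilon+\sigma^2)\leq\tfrac{1}{4}\Delta T\delta_*\eta$. Hence the deterministic contribution is at most $\tfrac{3}{2}\Delta T\delta_*\eta$.

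If $\tau^{T}_{\mathrm{pos,1}}(\eta)\leq\tau^{T}_{\mathrm{amp,1}}(\eta)\wedge\tau^T_{\mathrm{st}}(\eta)\wedge\tau^T_c\wedge\Delta T$, then at the stopping time the bound $2\Delta T\delta_*\eta$ is attained, forcing
\[\sigma\sup_{0\leq s\leq\Delta T}\Big|\int_0^s 1_{[0,\tau]}(s')\big\langle\Omega^T_s(\mathbf{m}^T(s'),s'),T_{\xi(T)+c(T)s'}\d W^Q_{T+s'}\big\rangle\Big|\geq \tfrac{1}{2}\Delta T\delta_*\eta.\]
By \Cref{lem:HS} and the bound \eqref{eqn:ctscontrol} of \Cref{lem:locmodcontrol}, the integrand has Hilbert--Schmidt norm bounded by $\|Q^{1/2}\Omega^T_s(\mathbf{m}^T(s'),s')\|_{L^2}\leq C_5(1+\eta)\leq C_5(1+\eta_0)$ on $[0,\tau]$. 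The main obstacle --- modest here --- is to choose $C_{14}$ large enough so that $\sigma\sqrt{\Delta T}\cdot eKC_5(1+\eta_0)\leq\tfrac{1}{2}\Delta T\delta_*\eta$, which is exactly the threshold required for \Cref{thm:gaussian} (applied with the trivial semigroup on $\mathcal{H}=\R$) to yield
\[\mathbb{P}\big[\tau^{T}_{\mathrm{pos,1}}(\eta)\leq\tau^{T}_{\mathrm{amp,1}}(\eta)\wedge\tau^T_{\mathrm{st}}(\eta)\wedge\tau^T_c\wedge\Delta T\big]\leq e^{-\delta_{14}\eta^2/\sigma^2}\]
with $\delta_{14}=\big(2eKC_5(1+\eta_0)\big)^{-2}(\Delta T\delta_*)^2/\Delta T$. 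This completes the proposed argument.
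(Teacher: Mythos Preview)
Your proposal is correct and follows essentially the same route as the paper: define the same $\tau$, plug the a-priori bounds on $[0,\tau]$ into \eqref{eqn:localxicontrol} via \Cref{lem:locmodcontrol}, absorb the deterministic terms using \hyperlink{hyp:constants}{C1} and the choice of $C_{14}$, and finish with the Gaussian tail bound of \Cref{thm:gaussian} for the remaining stochastic integral. Your explicit verification that $R^T_w(s')\leq\delta_5$ and your slightly different bookkeeping constants ($\tfrac{3}{2}\Delta T\delta_*\eta$ for the deterministic part, leaving $\tfrac{1}{2}\Delta T\delta_*\eta$ for the stochastic piece) are minor variations, and your final $\delta_{14}$ coincides with the paper's after simplifying $(\Delta T\delta_*)^2/\Delta T=\Delta T\,\delta_*^2$.
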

\begin{proof}
 Let us once more write  $\tau=\min\{\tau^{T}_{\mathrm{amp,1}}({\eta}),\tau^{T}_{\mathrm{pos,1}}({\eta}), \tau^T_{\mathrm{st}}(\eta), \tau^{T}_c\}$. From \eqref{eqn:localxicontrol} we obtain the inequality 
\begin{align*}
    2\Delta T\delta_*\eta =&  \big|\xi(T)+c(T)\tau-\xi^T(\tau)\big|  \\
    \leq&\delta_*\Delta T\eta+C_5 \Delta T \big(\delta_*(2+2\Delta T )\eta^2+\epsilon (1+\eta) +\sigma^2 (1+\eta)\big)\\
    &+\sigma \sup_{0\leq s \leq \Delta T} \Big|\int_0^s1_{[0,\tau]}(s')\big\langle \Omega^{T}_s\big(\mathbf{m}^T(s'),s'\big),T_{\xi(T)+c(T)s'}\d W_{T+s'}^Q\big\rangle\Big|.
\end{align*}
Ensuring $C_5 \big(\delta_*(2+2\Delta T )\eta^2+\epsilon (1+\eta) +\sigma^2 (1+\eta)\big)\leq \delta_*\eta/2$ as well as $\sigma \leq \frac{\sqrt{\Delta T}\delta_*\eta}{2eC_5(1+\eta_0)K}$ via $C_{14}$, the result follows from the tail bound 
\[\mathbb{P}\Big[\sigma \sup_{0\leq s \leq \Delta T} \Big|\int_0^s1_{[0,\tau]}(s')\big\langle \Omega^{T}_s\big(\mathbf{m}^T(s'),s'\big),T_{\xi(T)+c(T)s'}\d W_{T+s'}^Q\big\rangle\Big|\geq \Delta T\eta/2\Big]\leq e^{-\delta_{14} \eta^2/\sigma^2},\]
where
\[\delta_{14}=\Delta T\big(2eC_5(1+\eta_0)K\big)^{-2}\delta_*^2,\]
analogous to \eqref{eqn:tailbound}.
\end{proof}

Control of the global amplitude is established by estimating \eqref{eqn:postulate1} as
 \begin{align*}
        \big|c(T)-c(T+s)\big|\leq & \int_T^{T+s} \Big|c_d^{\sigma,\epsilon}\big(v(t'),c(t'),t'\big)\Big| \ \d t'+\sigma \Big|\int_T^{T+s}\big\langle c_s\big(v(t'),c(t')\big),T_{\xi(t')}\d W_{t'}^Q\big\rangle\Big|.
    \end{align*}
    The difference between the local and global positions is controlled via
\[|\xi(T+s)-\xi^T(s)|\leq \big|\xi(T)+c(T)s-\xi^T(s)\big|+\big|\xi(T)+c(T)s-\xi(T+s)\big|,\]
where
\begin{align*}
    \big|\xi(T)+c(T)s-\xi(T+s)\big|\leq& 
    \int_{T}^{T+s} |c(T)-c(t')|+\big|\Omega_d^{\sigma,\epsilon}(v(t'),c(t'),t'\big)\big| \d t'\\
    &+\sigma \Big|\int_{T}^{T+s}\langle \Omega_s(v(t'),c(t')\big),T_{\xi(t')} \d W_{t'}^Q\rangle \Big|.
\end{align*}
This leads to the following estimates on $\tau^T_{\mathrm{amp},2}$ and  $\tau^T_{\mathrm{pos},2}$.
\begin{lemma}\label{lem:controlofglobal} Assuming \hyperlink{hyp:initial}{S1}, \hyperlink{hyp:weight}{S2} and \hyperlink{hyp:constants}{C1}, there exist constants $\delta_{15},C_{15}> 0$ such that the following holds true. For all $\eta\in[0,\eta_0]$, all $C_{15}\sigma,C_{15}\epsilon\in [0,\eta]$ and $T\geq 0$ the stopping times $\tau^{T}_{\mathrm{amp,2}},\tau^{T}_{\mathrm{pos,2}}, t_{\mathrm{st}}$ and $t_c$ satisfy
  \[\mathbb{P}\big[\{\tau^{T}_{\mathrm{amp,2}}({\eta}) \leq \tau^{T}_{\mathrm{pos,2}}(\eta)\wedge\Delta T\}\cap \{T+\tau^{T}_{\mathrm{amp,2}}({\eta})\leq t_{\mathrm{st}}(2\eta)\wedge t_c\}\big]\leq e^{-\delta_{15}\eta^2/\sigma^2},\]
  and
\[\mathbb{P}\big[\{ \tau^{T}_{\mathrm{pos,2}}(\eta)\leq \tau^{T}_{\mathrm{amp,2}}({\eta})\wedge\Delta T\}\cap \{T+\tau^{T}_{\mathrm{pos,2}}(\eta)\leq t_{\mathrm{st}}(2\eta)\wedge t_c\}\big]\leq e^{-\delta_{15}\eta^2/\sigma^2}.\]
\end{lemma}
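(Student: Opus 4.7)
The plan is to parallel \Cref{lem:controloflocalamp,lem:controloflocalpos} but now to estimate the differences between \emph{global} and \emph{local} modulation parameters, using both the global SDEs \eqref{eqn:postulate1}--\eqref{eqn:postulate2} and the local SDEs \eqref{eqn:localc}--\eqref{eqn:localomega}. For the amplitude inequality I use the triangle split
\[|c(T+s)-c^T(s)|\le|c(T+s)-c(T)|+|c(T)-c^T(s)|,\]
so that at the activation time $s=\tau^{T}_{\mathrm{amp,2}}(\eta)$ at least one summand must be $\ge\delta_*\eta/2$; it suffices to bound each resulting event.

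For $|c(T+s)-c(T)|$ I integrate \eqref{eqn:postulate1}. On the event in the statement, $\|v(t')\|_{L^2_w}\le\|v(t')\|_{H^1_w}\le 2\eta$ (from $t_{\mathrm{st}}(2\eta)$) and $c(t')\in[c_{\mathrm{min}},c_{\mathrm{max}}]$ (from $t_c$), so \Cref{lem:modcontrol} yields $|c_d^{\sigma,\epsilon}|\lesssim\eta^2+\epsilon+\sigma^2$ and $\|Q^{1/2}c_s\|_{L^2}\lesssim 1+\eta$. Choosing $C_{15}$ large enough to absorb the deterministic integral into $\delta_*\eta/4$ and applying \Cref{thm:gaussian} with the trivial semigroup to the It\^o integral yields a tail bound $\le e^{-\tilde\delta_1\eta^2/\sigma^2}$, exactly as in \Cref{lem:controloflocalamp}. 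The same argument applied to \eqref{eqn:postulate2} bounds $|\Omega(T+s)-\Omega(T)|$ above $\delta_*\eta/2$ by a similar exponential tail, and combined with $|\xi(T)+c(T)s-\xi(T+s)|\le\int_T^{T+s}|c(T)-c(t')|\d t'+|\Omega(T+s)-\Omega(T)|$ gives $|\xi(T)+c(T)s-\xi(T+s)|\le\tilde C_0\eta$ on the intersection.

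The second summand $|c(T)-c^T(s)|$ is estimated via the local SDE \eqref{eqn:localc}, requiring \Cref{lem:locmodcontrol} and therefore $R^T_w(s)\le\delta_5$ together with $c^T(s)\in[\tfrac12 c_{\mathrm{min}},2c_{\mathrm{max}}]$. The amplitude window is immediate from $|c(T+s)-c^T(s)|\le\delta_*\eta$ and $\eta\le\eta_0$. For $R^T_w(s)$, on the intersection of the two Gaussian tail events above, the triangle inequality yields $|c(T)-c^T(s)|\le\tfrac{3}{2}\delta_*\eta$ and $|\xi(T)+c(T)s-\xi^T(s)|\le\tilde C_0\eta+2\Delta T\delta_*\eta$, while \Cref{lem:correspondence} with $\delta\le(1+2\Delta T)\delta_*\eta$, combined with $\|v(T+s)\|_{H^1_w}\le 2\eta$ and the now-bounded exponent $e^{w|\xi(T)+c(T)s-\xi(T+s)|}$, produces $\|v^T(s)\|_{L^2_w}\le\|v^T(s)\|_{H^1_w}\lesssim\eta$. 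Hence $R^T_w(s)\le\delta_5$ for $\eta\le\eta_0$. With \Cref{lem:locmodcontrol} now available, the local drift and Hilbert--Schmidt norm enjoy the same shape of estimates as in the global case, and a third application of \Cref{thm:gaussian} contributes another exponential tail.

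The position inequality follows from the decomposition
\[|\xi(T+s)-\xi^T(s)|\le|\xi(T)+c(T)s-\xi^T(s)|+|\xi(T)+c(T)s-\xi(T+s)|\]
given in the excerpt: the first term is controlled via the local SDE \eqref{eqn:localomega} following the argument of \Cref{lem:controloflocalpos} (with $R^T_w\le\delta_5$ established exactly as above), and the second is the quantity already bounded in the preceding paragraph. The main technical obstacle is threading the a priori bounds to secure $R^T_w(s)\le\delta_5$ without circularity; this is resolved by \Cref{lem:correspondence} together with the global-SDE Gaussian tail estimates, which allow the hypothesized global bounds $t_{\mathrm{st}}(2\eta)$ and $t_c$ to drive the control of the local quantities.
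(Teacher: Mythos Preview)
Your approach is essentially the paper's: the paper's proof consists of the one line ``fully analogous to the proofs of \Cref{lem:controloflocalamp} and \Cref{lem:controloflocalpos}'', together with the displayed estimates for $|c(T)-c(T+s)|$ and the decomposition of $|\xi(T+s)-\xi^T(s)|$ that precede the lemma. Your proposal is a faithful unpacking of what that sentence entails --- the triangle split on the amplitude, the global SDE bounds via \Cref{lem:modcontrol} and \Cref{thm:gaussian}, and the residual local piece handled as in \Cref{lem:controloflocalamp,lem:controloflocalpos}.

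Two technical points worth tightening. First, when you invoke \Cref{thm:gaussian} for the \emph{local} stochastic integral, the integrand bound from \Cref{lem:locmodcontrol} must hold pathwise on the integration interval, not merely on the complement of the two global tail events. The clean way to arrange this is to truncate by an auxiliary stopping time $\tau_R:=\inf\{s:R^T_w(s)>\delta_5\}$, apply \Cref{thm:gaussian} to the integral with indicator $1_{[0,\tau_R]}$, and then observe that on the intersection of all three good events your \Cref{lem:correspondence} computation forces $\tau_R\ge\tau^{T}_{\mathrm{amp,2}}$, so the truncation is vacuous there. Second, your conclusion $R^T_w(s)\le\delta_5$ actually yields $R^T_w(s)\le C''\eta$ for a constant $C''>1$ built from $C_4,\Delta T,\delta_*$ and the exponential factor; strictly speaking this requires $C''\eta_0\le\delta_5$, a mildly stronger constraint on $\eta_0$ than \hyperlink{hyp:constants}{C1} records. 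Neither point affects the structure of the argument.
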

\begin{proof}
    These bounds follow from computations fully analogous to those in the proofs of \Cref{lem:controloflocalamp} and \Cref{lem:controloflocalpos}.
\end{proof}
As a last preparation, we show how the correspondence between the local perturbation $v(T+s)$ and global perturbation $v^T(s)$ manifests in the stopping times $\tau_{\mathrm{st}}^T$ and its global counterpart $t_{\mathrm{st}}$. We recall that
\[t_{\mathrm{st}}(\eta)=\sup\big\{t\geq 0:\|v(t)\|_{H^1_w} \leq\eta\big\}.\]
\begin{lemma}\label{lem:monotonicity}
     Assume \hyperlink{hyp:initial}{S1}, \hyperlink{hyp:weight}{S2} and \hyperlink{hyp:constants}{C1}. For each $T\geq 0$ and $\eta\in[0,\eta_0]$, the stopping times $\tau^T_{\mathrm{st}}, \tau^{T}_{\mathrm{mod}}, t_c$ and $t_\mathrm{st}$ satisfy 
    \[\min\{T+\tau^T_{\mathrm{st}}(\eta),T+\tau^{T}_{\mathrm{mod}}(\delta_*\eta),t_c\}\leq t_\mathrm{st}(2\eta),\]
    $\mathbb{P}$-almost surely.
\end{lemma}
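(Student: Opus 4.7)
The plan is to fix an arbitrary time $t\in[T,\min\{T+\tau^T_{\mathrm{st}}(\eta),T+\tau^T_{\mathrm{mod}}(\delta_*\eta),t_c\})$, set $s:=t-T$, and prove pathwise that $\|v(t)\|_{H^1_w}\leq 2\eta$. Since $t$ is arbitrary in the interval, this forces $t\leq t_{\mathrm{st}}(2\eta)$, and letting $t$ approach the minimum from below delivers the claim. The analytic engine is the correspondence Lemma~\ref{lem:correspondence}, which converts control of the local remainder $v^T(s)$ into control of the global remainder $v(T+s)$ at the cost of an exponential translation factor and an additive $C_4\delta$ error.

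First I would unpack what each of the three stopping-time conditions gives. From $s<\tau^T_{\mathrm{st}}(\eta)$ one has $\|v^T(s)\|_{H^1_w}\leq\eta$; from $s<\tau^T_{\mathrm{mod}}(\delta_*\eta)$ one collects the four scalar bounds $|c(T)-c^T(s)|\leq\delta_*^2\eta$, $|c(T+s)-c^T(s)|\leq\delta_*^2\eta$, $|\xi(T)+c(T)s-\xi^T(s)|\leq 2\Delta T\delta_*^2\eta$ and $|\xi^T(s)-\xi(T+s)|\leq 2\Delta T\delta_*^2\eta$; and from $t<t_c$ one reads off $c(T+s)\in[c_{\min},c_{\max}]$. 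Since $\delta_*^2\eta\leq\delta_*\leq\tfrac14c_{\min}$ by \hyperlink{hyp:constants}{C1}, the first amplitude bound also places $c^T(s)\in[\tfrac12c_{\min},2c_{\max}]$, so the hypotheses of Lemma~\ref{lem:correspondence} hold with $\delta:=(1+2\Delta T)\delta_*^2\eta$ bounding $|c^T(s)-c(T+s)|+|\xi^T(s)-\xi(T+s)|$.

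Applying Lemma~\ref{lem:correspondence} then yields
\[\|v(t)\|_{H^1_w}\leq e^{w(\xi(T)+c(T)s-\xi(T+s))}\,\|v^T(s)\|_{H^1_w}+C_4(1+2\Delta T)\delta_*^2\,\eta,\]
and the triangle inequality applied to $\xi(T)+c(T)s-\xi(T+s)=[\xi(T)+c(T)s-\xi^T(s)]+[\xi^T(s)-\xi(T+s)]$ bounds the exponent by $4\Delta T\delta_*^2\eta$, so
\[\|v(t)\|_{H^1_w}\leq\bigl(e^{4w\Delta T\delta_*^2\eta}+C_4(1+2\Delta T)\delta_*^2\bigr)\eta.\]

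The main obstacle is verifying that the prefactor on the right is at most $2$, and this is exactly the point at which the constraint $\delta_*\leq 1/(e^2+3C_4)$ in \hyperlink{hyp:constants}{C1} is calibrated. The smallness of $\Delta T\delta_*$ coming from $\delta_*\leq 1/[216MC_7(\Delta T+\Delta T^2)]$, combined with $w\leq\sqrt{c_{\min}}/3$ and $\eta\leq 1$, ensures that $4w\Delta T\delta_*^2\eta$ is small enough to linearize the exponential into a bound $1+\delta_*e^2$ via $\log(1+x)\geq x/2$ on $[0,1]$; the same smallness of $\Delta T\delta_*$ absorbs $(1+2\Delta T)\delta_*$ into $3$, hence $C_4(1+2\Delta T)\delta_*^2\leq 3C_4\delta_*$. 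Assembling these two pieces collapses the right-hand side into $\bigl(1+\delta_*(e^2+3C_4)\bigr)\eta\leq 2\eta$, which is the desired bound. The bookkeeping is elementary but fragile in the way the constants must cooperate; once it is arranged, the monotonicity inequality follows immediately.
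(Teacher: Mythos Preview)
Your proposal is correct and follows essentially the same route as the paper: apply the correspondence Lemma~\ref{lem:correspondence} to transfer control of $v^T(s)$ to $v(T+s)$, bound the translation exponent and the additive $C_4\delta$ error via the stopping-time constraints, and close using $\delta_*\leq 1/(e^2+3C_4)$ from \hyperlink{hyp:constants}{C1}. Your constant bookkeeping is in fact more explicit than the paper's own (somewhat typo-laden) proof, and the extra factor of $\delta_*$ you pick up from working with $\tau^T_{\mathrm{mod}}(\delta_*\eta)$ only makes the final inequality easier to verify.
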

\begin{proof}
Writing  $\tau=\min\{\tau^T_{\mathrm{st}}(\eta),\tau^{T}_{\mathrm{mod}}(\eta),t_c-T\}$, we have
\[c(T+s)\in[c_{\mathrm{min}},  c_{\mathrm{max}}], \quad
\text{and}\quad \big|c^{ T}(s)-c(T+s)\big|\leq2\delta_*\eta, \quad \text{for}\quad s\in [0,\tau].\]
Since $\delta_*\leq \min\{\frac{1}{4}c_{\mathrm{min}},\frac{1}{2}c_{\mathrm{max}}\}$ via \hyperlink{hyp:constants}{C1}, it follows that also 
\[c^{T}(s)\in[\tfrac{1}{2}c_{\mathrm{min}},  2c_{\mathrm{max}}],\quad s\in [0,\tau].\]
\Cref{lem:correspondence} now gives
\[\Big|\big\|v({ T}+s)\big\|_{H^1_w}-e^{w(\xi( T)+c(T)s-\xi(T+s))}\big\|v^{ T}(s)\big\|_{H^1_w}\Big|\leq 3C_4\delta_*\eta.\]
We then obtain the bound 
\[\|v(T+s)\|_{H^1_w}\leq e^{w(\xi( T)+c(T)s-\xi(T+s))}\|v^T(s)\|_{H^1_w}+3C_4\eta\leq e^{2\eta_0}\delta_*\eta+3C_4\delta_*\eta<2\eta,\]
for all $s\in[0,\tau]$, where we have used that $\delta_*<(e^2+3C_4)^{-1}$ via \hyperlink{hyp:constants}{C1}.
\end{proof}
We are then ready to collect our results and establish control of $\tau_{\mathrm{mod}}^T$.
We note that the event 
\[\tau^T_{\mathrm{mod}}(\eta) \leq \tau^T_{\mathrm{st}}(\eta)\wedge \Delta T\quad \text{while}\quad T+\tau_{\mathrm{mod}}^T(\eta)  \leq t_c\]
implies that one of the events
\begin{align*}
    \mathcal{B}_1=& \big\{\tau^{T}_{\mathrm{amp,1}}(\eta) \leq \tau^{T}_{\mathrm{pos,1}}(  \eta)\wedge\tau^{T}_{\mathrm{amp,2}}({\eta})\wedge\tau^T_{\mathrm{st}}(\eta)\wedge\Delta T\big\}\cap \big\{T+\tau^{T}_{\mathrm{amp,1}}(\eta) \leq t_c\big\};\\
    \mathcal{B}_2=& \big\{\tau^{T}_{\mathrm{pos,1}}( \eta) \leq \tau^{T}_{\mathrm{amp,1}}(\eta)\wedge\tau^{T}_{\mathrm{amp,2}}({\eta})\wedge\tau^T_{\mathrm{st}}(\eta)\wedge\Delta T\big\}\cap\big\{ T+ \tau^{T}_{\mathrm{pos,1}}( \eta) \leq t_c\big\};\\
    \mathcal{B}_3=& \big\{\tau^{T}_{\mathrm{amp,2}}({\eta}) \leq \tau^{T}_{\mathrm{mod}}(\eta)\wedge\tau^T_{\mathrm{st}}(\eta) \wedge \Delta T\big\}\cap\big\{ T+\tau^{T}_{\mathrm{amp,2}}({\eta})\leq  t_c\big\}; \\
    \mathcal{B}_4=& \big\{\tau^{T}_{\mathrm{pos,2}}( \eta)\leq \tau^{T}_{\mathrm{mod}}({\eta})\wedge\tau^T_{\mathrm{st}}(\eta)\wedge\Delta T\big\}\cap\big\{ T+\tau^{T}_{\mathrm{pos,2}}( \eta)\leq t_c\big\},
\end{align*}
holds, depending on which of $\tau^T_{\mathrm{amp},1},\tau^T_{\mathrm{amp},2},\tau^T_{\mathrm{pos},1}$ and $\tau^T_{\mathrm{pos},2}$ is smallest.
\begin{proof}[Proof of \Cref{prop:controlofmod}]
We bound the probability of the events $\mathcal{B}_1, \mathcal{B}_2, \mathcal{B}_3$ and $\mathcal{B}_4$. In case $\mathcal{B}_1$ or $ \mathcal{B}_2$ holds, note that $t_c-T\wedge \tau^T_{\mathrm{amp},2}(\eta) \leq \tau^T_c$. Indeed, 
\begin{align*}
    |c^T(s)-c(T+s)|\leq \eta_0 \quad \text{and} \quad c(T+s)\in[c_{\mathrm{min}},c_{\mathrm{max}}]
\end{align*}
implies that $c^T(s)\in[\tfrac{1}{2}c_{\mathrm{min}},2c_{\mathrm{max}}]$. Thus, the probability of $\mathcal{B}_1\cup \mathcal{B}_2$ is controlled by \Cref{lem:controloflocalamp} and \Cref{lem:controloflocalpos}, respectively.

In case $\mathcal{B}_3$ holds, \Cref{lem:monotonicity} implies $T+\tau^{T}_{\mathrm{amp,2}}({\eta})\leq  t_{\mathrm{st}}(2\eta)$. Similarly, the event $\mathcal{B}_4$ implies $T+\tau^{T}_{\mathrm{pos,2}}({\eta})\leq  t_{\mathrm{st}}(2\eta)$. Hence, the probability of $\mathcal{B}_3\cup\mathcal{B}_4$ is bounded via \Cref{lem:controlofglobal}. 
\end{proof}

\section{Global control}
\label{sec:energy}
In this section, we establish long-time control of the global modulation system $\big(v(t),c(t),\xi(t)\big)$ introduced in \Cref{sec:global}. Our first result controls the probability that the soliton amplitude $c(t)$ exits its bounds $[c_{\mathrm{min}},c_{\mathrm{max}}]$.
Secondly, we analyze the growth of the remainder $v(t)$ in the unweighted space $L^2$. Besides being interesting in its own right, this is a crucial ingredient toward controlling the weighted norm of $v$ via \Cref{prop:remainder}. Since the traveling-wave operator $\mathcal{L}_c$ is not exponentially stable on $L^2$, it is standard to analyze the unweighted norm via energy arguments \cite{pegoweinstein, westdorp2}.  

Henceforth, we assume an integrability condition on the forcing term $f$, limiting the total energy contribution of the deterministic forcing in \eqref{eqn:skdv}.
\begin{itemize}
\item[\textbf{C2}]{
  \phantomsection\hypertarget{hyp:decay}{}\textit{The constant $E>0$ satisfies
  \[c_{\mathrm{min}}\leq c_*e^{-3E} \quad  \text{and} \quad c_{\mathrm{max}}\geq c_*e^{3E}.\] 
  The forcing term $f$ lies in $L^1\big([0,\infty)\big)$ and the constant  $\epsilon_0 \in (0,\infty]$ is small enough to ensure
  \begin{align*}\epsilon_0\int_0^\infty|f(t)|\d t\leq E.\end{align*}
 }}
\end{itemize}
With this condition in place, the soliton amplitude $c(t)$ stays within the bounds $[c_{\mathrm{min}},c_{\mathrm{max}}]$ for times that are small with respect to $\sigma^2$. Recall the notation
\begin{align*}t_{c}=&\sup\big\{t\geq 0: c(t)\in[c_{\mathrm{min}},c_{\mathrm{max}}]  \big\},\\
t_{\mathrm{st}}(\eta)=& \sup\big\{t\geq 0:\|v(t)\|_{H_w^1}\leq \eta\big\},
\end{align*}
the global counterparts of $\tau^T_c$ and $\tau^T_{\mathrm{st}}(\eta)$ from \Cref{sec:weighted}. The constant $\delta_{17}$ below will be introduced in \Cref{lem:logc}.
\begin{proposition}
\label{prop:tccontrol}   Assume \hyperlink{hyp:initial}{S1}, \hyperlink{hyp:weight}{S2}, \hyperlink{hyp:constants}{C1} and \hyperlink{hyp:decay}{C2}. For each $\eta\in[0,\eta_0]$, each $\epsilon\in[0,\epsilon_0]$, and $\sigma, T\geq 0$ that satisfy $\sigma^2 T, \eta^2 T\leq \delta_{17} $ we have
    \[\mathbb{P}[t_c \leq  T\wedge t_{\mathrm{st}}(\eta)]\leq e^{-\delta_{17}/(\sigma^2 T)}.\]
\end{proposition}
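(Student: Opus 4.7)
The plan is to apply It\^o's formula to $\log c(t)$, since condition \hyperlink{hyp:decay}{C2} phrases the exit thresholds on the logarithmic scale via $\log c_{\mathrm{min}} \leq \log c_* - 3E$ and $\log c_{\mathrm{max}} \geq \log c_* + 3E$. From the SDE for $c$ derived in \Cref{sec:global} together with the decomposition $c_d^{\sigma,\epsilon} = c_d^0 + \epsilon f(t)c_f + \sigma^2 c_d$, It\^o's formula yields
\begin{align*}
\d\log c = \frac{c_d^0(v,c)}{c}\d t + \epsilon f(t)\frac{c_f(v,c)}{c}\d t + \sigma^2\frac{c_d(v,c)}{c}\d t - \frac{\sigma^2\|Q^{1/2}c_s(v,c)\|_{L^2}^2}{2c^2}\d t + \frac{\sigma}{c}\big\langle c_s(v,c),T_{\xi}\d W_t^Q\big\rangle.
\end{align*}
A direct computation using $K_c^{-1}(0)$ and $\|\phi_c\|_{L^2}^2 = 6c^{3/2}$ gives $c_f(0,c) = \tfrac{4}{3}c$, so that $\epsilon f(t)c_f(0,c)/c = \tfrac{4}{3}\epsilon f(t)$ is the dominant forcing contribution.

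Up to the stopping time $\tau:= t_c\wedge T\wedge t_{\mathrm{st}}(\eta)$, we have $c(t)\in[c_{\mathrm{min}},c_{\mathrm{max}}]$ and $\|v(t)\|_{L^2_w}\leq\|v(t)\|_{H^1_w}\leq\eta$, so \Cref{lem:modcontrol} gives the bounds
\[\big|c_d^0(v,c)\big| \leq C_2\eta^2, \quad \big|c_f(v,c)-c_f(0,c)\big|\leq C_2\eta, \quad \big|c_d(v,c)\big|+\big\|Q^{1/2}c_s(v,c)\big\|_{L^2}^2\leq \tilde{C}.\]
Integrating and using \hyperlink{hyp:decay}{C2} together with $\sigma^2 T,\eta^2 T\leq\delta_{17}$ and $\epsilon\leq\eta_0\leq 1$, we get that on the event $\{t\leq\tau\}$,
\begin{align*}
\big|\log c(t)-\log c_*\big|\leq \tfrac{4}{3}\epsilon\int_0^\infty|f(s)|\d s + C_*\big(\eta^2+\sigma^2+\epsilon\eta\big)T + \big|M(t)\big|\leq \tfrac{4}{3}E + C_*\delta_{17} + \big|M(t)\big|,
\end{align*}
where $M(t) = \int_0^t \tfrac{\sigma}{c(s)}\big\langle c_s(v(s),c(s)), T_{\xi(s)}\d W_s^Q\big\rangle$ is a martingale with integrand bounded in $\mathrm{HS}(L_Q^2,\R)$ norm by $\sigma\|Q^{1/2}c_s\|_{L^2}/c_{\mathrm{min}}\leq \tilde{C}_*\sigma$ via \Cref{lem:HS} and \Cref{lem:modcontrol}.

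If $t_c\leq T\wedge t_{\mathrm{st}}(\eta)$, then by definition $c(t_c)\in\{c_{\mathrm{min}},c_{\mathrm{max}}\}$, so \hyperlink{hyp:decay}{C2} forces $|\log c(t_c)-\log c_*|\geq 3E$. Combining with the previous estimate, this forces
\[\sup_{0\leq t\leq T}\big|M(t\wedge\tau)\big|\geq 3E - \tfrac{4}{3}E - C_*\delta_{17}\geq \tfrac{3}{2}E,\]
provided $\delta_{17}$ is chosen small enough (relative to $E$ and $C_*$). Applying \Cref{thm:gaussian} to $M$ with the trivial identity semigroup on $\mathcal{H}=\R$ and bound $B=\tilde{C}_*\sigma$ yields
\[\mathbb{P}\big[\sup_{0\leq t\leq T}|M(t\wedge\tau)|\geq \tfrac{3}{2}E\big]\leq \exp\!\big(-c'E^2/(\sigma^2 T)\big)\]
for some constant $c'>0$ (after checking the threshold $\tfrac{3}{2}E > e\tilde{C}_*\sigma K\sqrt{T}$, which holds for $\sigma^2 T\leq\delta_{17}$ small). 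Choosing $\delta_{17}\leq c'E^2$ then gives the desired bound $e^{-\delta_{17}/(\sigma^2 T)}$. The main obstacle is merely bookkeeping: choosing $\delta_{17}$ small enough to simultaneously absorb the non-martingale drift contributions, guarantee the Gaussian threshold in \Cref{thm:gaussian}, and leave the exponential rate at least $\delta_{17}/(\sigma^2 T)$.
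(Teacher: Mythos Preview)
Your approach is essentially the same as the paper's: apply It\^o's formula to $\log c$, isolate the leading drift $\tfrac{4}{3}\epsilon f$ via the identity $c_f(0,c)=\tfrac{4}{3}c$, bound the remaining drift terms using \Cref{lem:modcontrol}, and then invoke the Gaussian tail estimate \Cref{thm:gaussian} on the resulting martingale. The paper packages this through an auxiliary stopping time $t_{\log}$ (\Cref{lem:logc}) and then observes $t_c\ge t_{\log}$, but this is only organisational.

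One small bookkeeping point: you bound the $\epsilon$-remainder by $C_*\epsilon\eta T$ and then absorb it via ``$\epsilon\le\eta_0\le 1$'', but the proposition does not assume $\epsilon\le\eta_0$ (indeed $\epsilon_0$ in \hyperlink{hyp:decay}{C2} may be $\infty$), so $\epsilon\eta T$ is not controlled by $\sigma^2 T,\eta^2 T\le\delta_{17}$. The fix is the one the paper uses: bound $\epsilon\int_0^t|f(t')|\,\d t'\le E$ directly from \hyperlink{hyp:decay}{C2}, so the remainder becomes $C_2 E\eta$, which is small for $\eta$ small independently of $T$ and $\epsilon$.
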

 The second main result of this section concerns the stopping time
 \[t_{\mathrm{en}}(\eta)=\sup\big\{t\geq 0:\big\|v(t)\big\|_{L^2}\leq \eta  \big\},\]
a global counterpart of $\tau^T_{\mathrm{en}}$ of \Cref{sec:weighted}. The probability that $\|v(t)\|_{L^2}$ remains small on an interval $[0,T]$ is controlled by the forcing parameters $\sigma$ and $\epsilon$.
 \begin{proposition}\label{prop:l2control}
    Assume \hyperlink{hyp:initial}{S1}, \hyperlink{hyp:weight}{S2}, \hyperlink{hyp:constants}{C1} and \hyperlink{hyp:decay}{C2}. There exist constants $C_{16}, \delta_{16}>0$ such that for all $\sigma,T,\lambda>0$ satisfying $\sigma^2 T \leq \delta_{16}$, each $\eta\in[0,\eta_0]$ and each $\epsilon\in[0,\epsilon_0]$, we have
     \[\mathbb{P}\Big[t_{\mathrm{en}}(\lambda)\leq T\wedge t_c \wedge t_{\mathrm{st}}(\eta)\Big]\leq C_{16}\lambda^{-1}\Big(T\big( \sigma^2+\epsilon\eta+\eta^2\big)+\sqrt{T}\sigma  \eta\Big).\]
\end{proposition}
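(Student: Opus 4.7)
I would analyze $\|v(t)\|^2_{L^2}$ via the energy identity
\[\|v(t)\|^2_{L^2} = \|u(t)\|^2_{L^2} - F(c(t)), \qquad F(c) := \|\phi_c\|^2_{L^2} = 6c^{3/2},\]
which follows from the translation-invariance of the $L^2$-norm applied to $u(t,\cdot+\xi(t))=\phi_{c(t)}+v(t)$ together with the orthogonality $\langle v,\phi_{c(t)}\rangle=0$. The strategy is to apply an It\^o formula to each term on the right separately---thereby bypassing the regularity issues of \eqref{eqn:vmod}---and then to exploit the leading-order cancellations embedded in the modulation system to control the drift and martingale parts of $d\|v\|^2_{L^2}$.

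Concretely, the conservative nature of the unforced KdV flow ($\langle u,\partial_x^3 u\rangle=\langle u, 2u\partial_x u\rangle=0$) yields
\[d\|u\|^2_{L^2}=\bigl(2\epsilon f(t)+\sigma^2q(0)\bigr)\|u\|^2_{L^2}\,dt+2\sigma\langle u^2,dW^Q_t\rangle,\]
where $q(0)$ arises as the pointwise diagonal value of the noise-correlation kernel. Finite-dimensional It\^o applied to $F(c(t))$ via \eqref{eqn:postulate1} gives
\[dF(c)=F'(c)\,c_d^{\sigma,\epsilon}(v,c,t)\,dt+\tfrac{1}{2}\sigma^2 F''(c)\|Q^{1/2}c_s(v,c)\|^2_{L^2}\,dt+\sigma F'(c)\langle c_s(v,c),T_\xi\,dW^Q_t\rangle.\]
Subtracting yields $d\|v\|^2_{L^2}=A(t)\,dt+dM(t)$. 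The key algebraic identities---$F'(c)c_f(0,c)=2F(c)$ (using $c_f(0,c)=\tfrac{4}{3}c$ from \eqref{eqn:cap}) and $F'(c)c_s(0,c)=2\phi_c^2$---cancel the leading $\epsilon f$ drift and the $v$-independent contribution to the martingale integrand respectively, leaving an uncancelled $\sigma^2$ drift of size $O(\sigma^2)$.

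Applying \Cref{lem:modcontrol} to bound the Lipschitz residuals of $c_f$, $c_s$, $c_d$ in $v$, and invoking the stopping-time hypotheses $\|v\|_{H^1_w}\leq\eta$ and $c\in[c_{\min},c_{\max}]$, one obtains
\[|A(t)|\leq C\bigl(\sigma^2+\epsilon\eta+\eta^2\bigr), \qquad \bigl\|\text{integrand of }M\bigr\|^2_{L^2_Q}\leq C\sigma^2\eta^2.\]
This gives $\int_0^{T\wedge\tau}|A|\,dt\leq CT(\sigma^2+\epsilon\eta+\eta^2)$ and, via the Burkholder--Davis--Gundy inequality, $\mathbb{E}\bigl[\sup_{t\leq T\wedge\tau}|M(t)|\bigr]\leq C\sigma\eta\sqrt{T}$ with $\tau$ any stopping time bounded by $t_c\wedge t_{\mathrm{st}}(\eta)$. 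A stopping-time Markov argument at $\tau=t_{\mathrm{en}}(\lambda)\wedge T\wedge t_c\wedge t_{\mathrm{st}}(\eta)$ then combines these bounds with the identity $\|v(\tau)\|_{L^2}=\lambda$ on the target event to yield the stated probability estimate with $\lambda^{-1}$-dependence.

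The principal obstacle is rigorously justifying the It\^o formula for $\|u\|^2_{L^2}$, given that $u$ is only a mild solution of \eqref{eqn:skdv}; the mild-It\^o machinery developed in Appendix A of the paper for \Cref{lem:innerprod} should transfer essentially verbatim. A secondary challenge is the bookkeeping needed to verify the algebraic cancellations when subtracting $dF(c)$ from $d\|u\|^2$, and to confirm that all uncancelled terms depend at least linearly on $\|v\|_{L^2_w}$---so that they collapse into the stated $\epsilon\eta+\eta^2$ drift contribution and the $\sigma^2\eta^2$ quadratic variation of the martingale.
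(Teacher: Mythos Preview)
Your plan matches the paper's approach in its essentials: the decomposition $\|v\|_{L^2}^2=\|u\|_{L^2}^2-6c^{3/2}$, separate It\^o formulas for each piece, the cancellation of the leading $\epsilon f$ drift via $F'(c)c_f(0,c)=2F(c)$ and of the $v$-independent martingale term via $F'(c)c_s(0,c)=2\phi_c^2$, followed by BDG and Markov. The mild-It\^o justification you mention is exactly what the paper does.

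There is, however, a genuine gap in your closure estimate. After subtraction, the drift still contains the contribution $(2\epsilon f(t)+\sigma^2 q(0))\|v\|_{L^2}^2$ coming from $(2\epsilon f+\sigma^2 q(0))\|u\|_{L^2}^2$, and the martingale integrand contains $v^2$ from $u^2-\phi_c^2=2\phi_c v+v^2$. Neither is controlled by the stopping time $t_{\mathrm{st}}(\eta)$, which only bounds the \emph{weighted} norm; your claimed bounds $|A(t)|\le C(\sigma^2+\epsilon\eta+\eta^2)$ and $\|\text{integrand}\|_{L^2_Q}^2\le C\sigma^2\eta^2$ are therefore not valid as stated. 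The paper closes this as follows: the BDG estimate produces a term $C\sigma\sqrt{T}\,\mathbb{E}\sup_{t\le T\wedge\bar t}\|v(t)\|_{L^2}^2$, which is absorbed into the left-hand side using $\sigma\sqrt{T}\le\tfrac12 C^{-1}$ (this is where $\sigma^2 T\le\delta_{16}$ enters); the remaining drift term $\int_0^T(\sigma^2+2\epsilon|f|)\,\mathbb{E}\sup_{t'\le t}\|v(t')\|_{L^2}^2\,dt$ is then handled by Gr\"onwall, with the resulting exponential $e^{C\sigma^2T+2C\epsilon\int_0^T|f|}$ bounded via $\sigma^2T\le\delta_{16}$ and the integrability assumption \hyperlink{hyp:decay}{C2}. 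Without this Gr\"onwall/absorption step your argument does not close, and indeed neither the hypothesis $\sigma^2T\le\delta_{16}$ nor \hyperlink{hyp:decay}{C2} is used anywhere in your sketch.
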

We first establish \Cref{prop:tccontrol}, making use of the evolution equation 
\begin{align}\d \log(c(t))=&\Big(\frac{c_d^{\sigma,\epsilon}(v(t),c(t),t)}{c(t)}-\sigma^2\frac{\big\|Q^{1/2}c_s(v(t),c(t))\big\|_{L^2}^2}{2c^2(t)} \ \Big) \ \d t+\sigma \frac{\langle c_s(v(t),c(t)),T_{\xi}\d W_t^Q\rangle }{c(t)},\label{eqn:logc}
\end{align}
which one finds by applying the It\^o lemma \cite[Theorem 4.32]{daprato} to \eqref{eqn:postulate1}. In particular, as an intermediate result, we control the stopping time 
\[t_{\mathrm{log}}=\sup\{t\geq 0 :|\log(c(t)/c_*)-\frac{4}{3}\epsilon \int_0^t|f(t')|\d t'|\leq E\}.\]
It is then ensured that $c(t)$ remains within the limits $[c_{\mathrm{min}},c_{\mathrm{max}}]$ before $t_{\log}$, and consequently $t_c\geq t_{\log}$.
\begin{lemma}\label{lem:logc}   Assuming \hyperlink{hyp:initial}{S1}, \hyperlink{hyp:weight}{S2}, \hyperlink{hyp:constants}{C1} and \hyperlink{hyp:decay}{C2}, there exists a constant $\delta_{17}>0$ such that the following holds true. For each $\eta\in[0,\eta_0]$, each $\epsilon\in[0,\epsilon_0]$, and $\sigma, T\geq 0$ satisfying $\sigma^2 T, \eta^2 T\leq \delta_{17} $ we have
    \[\mathbb{P}[t_{\mathrm{log}}\leq t_{\mathrm{st}}(\eta)\wedge T]\leq e^{-\delta_{17}/\sigma^2T}.\]
\end{lemma}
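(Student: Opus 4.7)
The plan is to apply It\^o's formula to $\log(c(t))$ via the SDE \eqref{eqn:postulate1} for $c$, yielding \eqref{eqn:logc}, and then to track the compensated process
\[
X(t) := \log(c(t)/c_*) - \tfrac{4}{3}\epsilon\int_0^t |f(s)|\,ds.
\]
The event $\{t_{\log}\le t_{\mathrm{st}}(\eta)\wedge T\}$ is precisely the event that $|X|$ attains the value $E$ somewhere on $[0, T\wedge t_{\mathrm{st}}(\eta)]$. A short check of the definition of $t_{\log}$ confirms that on $\{t<t_{\log}\}$ one has $c(t)\in[c_*e^{-E},c_*e^{7E/3}]\subseteq[c_{\min},c_{\max}]$, so the a priori bounds of \hyperlink{hyp:weight}{S2} are available throughout.

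Next I would bound the drift of $X$. Using the decomposition $c_d^{\sigma,\epsilon}(v,c,t)=c_d^0(v,c)+\epsilon f(t)c_f(v,c)+\sigma^2 c_d(v,c)$ and the identity $c_f(0,c)=\tfrac{4}{3}c$ (inherited from \eqref{eqn:cap}), the drift $\mu_X(t)$ of $X$ splits into a leading piece which compensates against $-\tfrac{4}{3}\epsilon |f(t)|$ modulo the sign of $f$, plus remainders controlled by \Cref{lem:modcontrol}: the contribution from $c_d^0$ is $O(\|v\|_{L^2_w}^2)$ by \eqref{eqn:cd0control}, the $c_f$-correction is $O(\epsilon|f(t)|\|v\|_{L^2_w})$ by \eqref{eqn:c?control}, and the remaining pieces $\sigma^2 c_d(v,c)/c-\tfrac{1}{2}\sigma^2\|Q^{1/2}c_s(v,c)\|_{L^2}^2/c^2$ are $O(\sigma^2)$. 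On $[0, t\wedge t_{\mathrm{st}}(\eta)\wedge t_{\log}]$ we then have $|\mu_X|\le C(\eta^2+\sigma^2)+C\epsilon|f(t)|\eta+C\epsilon|f(t)|$, whence
\[
\int_0^{t}|\mu_X(s)|\,ds \;\le\; CT(\eta^2+\sigma^2)+CE\eta+CE,
\]
where the $\int|f|\le E/\epsilon$ estimate from \hyperlink{hyp:decay}{C2} has been used. Taking $\delta_{17}$ sufficiently small (and, if necessary, $\eta_0,\epsilon_0$ small) makes this total deterministic contribution bounded by $E/2$.

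Consequently, on $\{t_{\log}\le t_{\mathrm{st}}(\eta)\wedge T\}$ the stochastic integral
\[
M(t) := \int_0^{t}\bigl\langle c_s(v(s),c(s))/c(s),\,T_{\xi(s)}\,\d W^Q_s\bigr\rangle,
\]
stopped at $t_{\mathrm{st}}(\eta)\wedge t_{\log}$, must satisfy $\sigma\sup_{[0,T]}|M|\ge E/2$. By \eqref{eqn:cscontrol} of \Cref{lem:modcontrol} together with the third identity of \Cref{lem:HS}, its $\mathrm{HS}(L^2_Q,\R)$-integrand has norm $\|Q^{1/2}c_s(v,c)\|_{L^2}/c$, uniformly bounded by a constant $B$ on the event under consideration. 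Applying \Cref{thm:gaussian} with the trivial semigroup ($M_{\mathrm{sg}}=1$) gives
\[
\mathbb{P}\Big[\sigma\sup_{t\in[0,T]}|M(t)|\ge E/2\Big] \;\le\; \exp\bigl(-(eBK)^{-2}(E/2)^2/(\sigma^2 T)\bigr),
\]
valid as soon as $E/2\ge eBK\sigma\sqrt{T}$, which is guaranteed by $\sigma^2T\le\delta_{17}$ for $\delta_{17}$ small. Shrinking $\delta_{17}$ once more so as to absorb the constant $(eBK)^{-2}(E/2)^2$ yields the claimed bound.

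The main obstacle is the careful bookkeeping in the second step: the drift correction from $\epsilon f\,c_f/c$ does not exactly cancel $\tfrac{4}{3}\epsilon|f|$ pointwise, so the analysis must rely on the smallness of $\eta$ and $\sigma$ rather than on cancellation, and on the smallness of $\eta$ to turn the $O(\epsilon\eta\int|f|)\le CE\eta$ term into an $E/4$-bound independent of $T$. All other pieces follow the template of the short-time modulation estimates in \Cref{sec:parameters}.
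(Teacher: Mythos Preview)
Your overall strategy---apply It\^o's formula to $\log c$, bound the drift of the compensated process $X$, reduce to a large-deviation estimate on the stochastic integral, and invoke \Cref{thm:gaussian}---is exactly the paper's. The a~priori confinement $c(t)\in[c_*e^{-E},c_*e^{7E/3}]\subset[c_{\min},c_{\max}]$ before $t_{\log}$ and the use of \Cref{lem:modcontrol} for the correction terms are both correct.

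The gap is in your drift bound. You retain a bare term $C\epsilon|f(t)|$ coming from the mismatch between $\tfrac{4}{3}\epsilon f$ and the compensator $\tfrac{4}{3}\epsilon|f|$, integrate it to $CE$, and then claim that taking $\delta_{17}$ (and if necessary $\eta_0,\epsilon_0$) small forces $CT(\eta^2+\sigma^2)+CE\eta+CE\le E/2$. This fails: $E$ is the \emph{fixed} structural constant of \hyperlink{hyp:decay}{C2}, and the implicit constant multiplying it is at least $8/3$ (since $|f-|f||\le 2|f|$), so $CE\le E/2$ cannot hold for any choice of $\delta_{17},\eta_0,\epsilon_0$. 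The paper's argument works only because the leading $\epsilon$-drift cancels \emph{exactly} via $c^{-1}c_f(0,c)=\tfrac{4}{3}$, leaving merely the correction $\epsilon|f(t)|\cdot|c_f(v,c)-c_f(0,c)|/c=O(\epsilon|f(t)|\eta)$, which integrates to $O(E\eta)$ and can indeed be made $\le E/4$ by restricting $\eta_0$. (Read literally, the paper's compensator also carries $|f|$ and would leave the same residual you flagged; its proof nonetheless treats the leading part as cancelling, so the intended reading is either $f\ge0$ or a compensator $\tfrac{4}{3}\epsilon\int_0^t f$.) Either way, the mechanism that makes the lemma go through is exact cancellation of the leading $\epsilon$-drift, not smallness of parameters: your bound on $|\mu_X|$ must not contain a bare $C\epsilon|f(t)|$ term.
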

\begin{proof}
Using the identity $c^{-1}c_f(0,c)=\frac{4}{3}$, we estimate \eqref{eqn:logc} as
    \begin{align*}
        \Big|\log(c(t)/c_*)-\frac{4}{3}\epsilon \int_0^t|f(t')|\d t'\Big|\leq& \int_0^t\Big|\frac{c_d^0(v(t'),c(t'))}{c(t')}\Big|+\epsilon |f(t')|\Big|\frac{c_f(v(t'),c(t'))-c_f(0,c(t'))}{c(t')}\Big| \ \d t'\\
        &\quad+\sigma^2\int_0^t\Big|\frac{c_d(v(t'),c(t'))}{c(t')}\Big|-\frac{\big\|Q^{1/2}c_s(v(t'),c(t'))\big\|_{L^2}^2}{2c^2(t')} \ \d t'\\
&\quad+\sigma \Big|\int_0^t\frac{\langle c_s(v(t'),c(t')),T_{\xi}\d W_{t'}^Q\rangle }{c(t')}\Big|\\
\leq&C_2t (\eta^2+\sigma^2)+C_2\epsilon \eta \int_0^t |f(t')|\d t'\\
&\quad+\sigma \Big|\int_0^t\frac{\langle c_s(v(t'),c(t')),T_{\xi}\d W_{t'}^Q\rangle }{c(t')}\Big|,
    \end{align*}
for $t\in[0,t_{\log}\wedge t_{\mathrm{st}}(\eta)]$. Condition \hyperlink{hyp:decay}{C2} now gives
\[\epsilon \eta \int_0^t |f(t')|\d t'\leq E\eta.\]
Ensuring $C_2T (\eta^2+\sigma^2)+C_2E \eta \leq L/2,$
the inequality $t_{\mathrm{log}}\leq t_{\mathrm{st}}(\eta)\wedge T$ implies
\begin{align*}
     E=&\Big|\log(c(t_{\mathrm{log}})/c_*)-\frac{4}{3}\int_0^{t_{\mathrm{log}}}|f(t')|\d t'\Big|\\
     \leq & E/2+\sigma \sup_{0\leq t \leq T}\Big|\int_0^t 1_{[0,t_{\mathrm{log}}\wedge t_{\mathrm{st}}(\eta)]}(t')\frac{\langle c_s(v(t'),c(t')),T_{\xi}\d W_{t'}^Q\rangle }{c(t')}\Big|
\end{align*}
and consequently 
\[\sigma \sup_{0\leq t \leq T}\Big|\int_0^t 1_{[0,t_{\mathrm{log}}\wedge t_{\mathrm{st}}(\eta)]}(t')\frac{\langle c_s(v(t'),c(t')),T_{\xi}\d W_{t'}^Q\rangle }{c(t')}\Big|\geq E/2.\]
The probability of this event is bounded by the tail estimate \Cref{thm:gaussian}.
\end{proof}
\begin{proof}[Proof of \Cref{prop:tccontrol}]
    The result now follows as an immediate corollary to \Cref{lem:logc}, in view of the fact that $t_c\geq t_{\log}$.
\end{proof} 
We now turn our attention to the energy result, \Cref{prop:l2control}. Our first result regarding the global modulation system outlined in \Cref{sec:global} is as follows.

\begin{lemma}
Assume  \hyperlink{hyp:initial}{S1} and \hyperlink{hyp:weight}{S2}. For each $t\geq0$, the inequalities
\[\|v(t')\|_{L^2_w}\leq \delta_1 \quad \text{and} \quad c(t')\in[c_{\mathrm{min}},c_{\mathrm{max}}],\quad t' \in[0,t]\] 
imply that
   \begin{align}
          \big\|v(t)\big\|_{L^2}^2=&\sigma^2\int_0^t\Big(6c^{3/2}- 9 c_d(v,c) c^{1/2} - \tfrac{9}{4} \big\|Q^{1/2}c_s(v,c)\big\|_{L^2}^2 c^{-1/2}+\big\|v\big\|_{L^2}^2\Big) \d t'\nonumber\\
          &- 9\int_0^t c^0_d(v,c) c^{1/2} \d t' +\epsilon\int_0^tf(t')\Big(2\|v\|_{L^2}^2-9\big(c_f(v,c)-c_f(0,c)\big)\Big)\d t'\nonumber\\
   &-9\sigma \int_0^t c^{1/2}\big\langle c_s(v,c)-c_s(0,c),\d W_{t'}^Q\big\rangle+2\sigma \int_0^t\big\langle 2\phi_cv+v^2,T_{\xi} \d W_{t'}^Q\big\rangle\label{eqn:energy}
   \end{align}
  holds $\mathbb{P}$-almost surely.
   \end{lemma}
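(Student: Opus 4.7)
The plan is to exploit the orthogonality condition $\langle v(t),\phi_{c(t)}\rangle=0$ to reduce the computation to two one-dimensional It\^o calculations. Translation-invariance of the $L^2$-norm gives
\[\|v(t)\|_{L^2}^2=\|\phi_{c(t)}+v(t)\|_{L^2}^2-\|\phi_{c(t)}\|_{L^2}^2=\|u(t)\|_{L^2}^2-6c(t)^{3/2},\]
where $\|\phi_c\|_{L^2}^2=\tfrac{9c^2}{4}\int_{\R}\sech^4(\sqrt{c}x/2)\,\d x=6c^{3/2}$ is a direct computation. It thus suffices to differentiate the real-valued processes $\|u(t)\|_{L^2}^2$ and $6c(t)^{3/2}$ separately and subtract.

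For the first, I would apply a mild It\^o formula (in the spirit of \Cref{app:mild}) to the SPDE \eqref{eqn:skdv}. The conservative KdV nonlinearity and dispersion contribute nothing, since $\langle u,\partial_x^3 u\rangle=\langle u,u\partial_x u\rangle=0$ by integration by parts (the $H^1$-regularity from \Cref{lem:wellposed} is enough for these identities after approximation). Only the forcing and the pointwise multiplicative noise survive, yielding
\[\d\|u\|_{L^2}^2=\bigl(\sigma^2\|u\|_{L^2}^2+2\epsilon f(t)\|u\|_{L^2}^2\bigr)\,\d t+2\sigma\bigl\langle (\phi_c+v)^2,T_{\xi}\d W_t^Q\bigr\rangle,\]
where the stochastic integral has been recast in the shifted frame using $u(t,x+\xi(t))=\phi_{c(t)}(x)+v(t,x)$. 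For the second process, ordinary It\^o's formula applied to the scalar SDE \eqref{eqn:postulate1} with the map $c\mapsto 6c^{3/2}$ gives
\[\d(6c^{3/2})=9c^{1/2}\bigl[c^0_d+\epsilon f(t)c_f+\sigma^2 c_d\bigr]\,\d t+\tfrac{9}{4}c^{-1/2}\sigma^2\bigl\|Q^{1/2}c_s\bigr\|_{L^2}^2\,\d t+9\sigma c^{1/2}\bigl\langle c_s,T_{\xi}\d W_t^Q\bigr\rangle.\]

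Subtracting these two expansions and substituting $\|u\|_{L^2}^2=6c^{3/2}+\|v\|_{L^2}^2$ recovers every term in \eqref{eqn:energy}, once I match the remainders against quantities produced by the modulation system at $v=0$. Specializing the explicit formulas of \Cref{sec:global} at $v=0$, using the lower-triangular form of $K_c(0)$ together with $\|\phi_c\|_{L^2}^2=6c^{3/2}$, yields the two identities
\[c_f(0,c)=\tfrac{4c}{3},\qquad c_s(0,c)=\tfrac{2}{9}c^{-1/2}\phi_c^2,\]
the first of which absorbs the $12c^{3/2}=9c^{1/2}c_f(0,c)$ contribution in the forcing drift to produce the difference $c_f(v,c)-c_f(0,c)$, while the second recasts $2\sigma\langle\phi_c^2,T_{\xi}\d W_t^Q\rangle$ as $9\sigma c^{1/2}\langle c_s(0,c),T_{\xi}\d W_t^Q\rangle$, producing the difference $c_s(v,c)-c_s(0,c)$ in the stochastic integral. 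I expect the main obstacle to lie in justifying the It\^o expansion of $\|u\|_{L^2}^2$ rigorously: with $u$ only known to be in $H^1$, the term $\partial_x^3 u$ is not classically defined, and one must pass to the mild It\^o formulation underlying \Cref{lem:innerprod} and developed in \Cref{app:mild}. Once that technicality is handled, the rest is bookkeeping.
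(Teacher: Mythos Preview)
Your proposal is correct and follows essentially the same route as the paper: decompose $\|v\|_{L^2}^2=\|u\|_{L^2}^2-6c^{3/2}$ via the orthogonality $\langle v,\phi_c\rangle=0$, compute $\d\|u\|_{L^2}^2$ by a mild It\^o formula and $\d(6c^{3/2})$ by the scalar It\^o formula applied to \eqref{eqn:postulate1}, then subtract and use the identities $c_f(0,c)=\tfrac{4}{3}c$ and $9c^{1/2}c_s(0,c)=2\phi_c^2$ to produce the differences $c_f(v,c)-c_f(0,c)$ and $c_s(v,c)-c_s(0,c)$. You also correctly flag that the only nontrivial step is the rigorous justification of the It\^o expansion for $\|u\|_{L^2}^2$, which the paper handles by citing the mild It\^o formula and the isometry property of the Airy group.
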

\begin{proof}
 
We observe that 
\begin{align}\big\|v(t)\big\|_{L^2}^2=\big\|u(t)\big\|_{L^2}^2-\big\|\phi_{c(t)}\big\|_{L^2}^2\label{eqn:difference}\end{align}
by Pythagoras' theorem and the orthogonality condition \eqref{eqn:ortho}. Via It\^{o}'s lemma \cite[Theorem 1]{mild} applied to the functional $M(u)=\|u\|_{L^2}^2$, we obtain
\begin{align}\label{eqn:du}
    \d \|u\|_{L^2}^2=&\sigma^2 \|u\|^2_{L^2}\ \d t+2\epsilon f(t)\|u\|_{L^2}^2\ \d t+2 \sigma \langle u,u \ \d W_t^Q\rangle\\
    =&\big(\sigma^2 +2\epsilon f(t)\big)\big(6c^{3/2}(t)+\|v\|_{L^2}^2\big)\ \d t+2 \sigma \big\langle \phi_c^2+2\phi_c v+v^2, \ T_{\xi}  \d W_t^Q\big\rangle.\nonumber\end{align}
Here we have used that $M:L^2\to \R$ is twice differentiable with Fr\'echet derivatives
\begin{align*}
    \d M(u)[v]=2\langle u,v\rangle, \quad \d^2 M(u)[v,w]=2\langle v,w\rangle, 
\end{align*}
and that the Airy group $\{e^{-\partial_x^3t}\}_{t\in \R}$ is a $C_0$-group of isometries on $L^2$, in the sense that
\begin{align*}
    M(e^{-\partial_x^3t}u)=\|u\|_{L^2}^2, \quad \d M(e^{-\partial_x^3t}u)[e^{-\partial_x^3t}v]=2\langle u,v\rangle, \quad \d^2 M(e^{-\partial_x^3t}u)[e^{-\partial_x^3t}v,e^{-\partial_x^3t}w]=2\langle v,w\rangle. 
\end{align*}
On the other hand, we can also employ It\^o's lemma to compute
\begin{align}\label{eqn:dphi}
    \d \|\phi_{c(t)}\|_{L^2}^2=\d \big(6c^{3/2}(t)\big) =& \Big( 9c_d^{\sigma,\epsilon}(v,c,t) c^{1/2} + \tfrac{9}{4}\sigma^2 \big\|Q^{1/2}c_s(v,c)\big\|_{L^2}^2 c^{-1/2} \Big)\ \d t \\
    &+ 9\sigma c^{1/2} \big\langle c_s(v,c),T_{\xi}  \d W^Q_t\big\rangle,
\end{align}
where we recall that
\[c_d^{\sigma,\epsilon}(v,c,t)=c_d^0(v,c)+\epsilon f(t)c_f(v,c)+\sigma^2 c_d(v,c).\]
Note here that $9 c^{1/2} c_s(0,c)=2\phi_c^2$, thus the leading-order diffusion term in $\d \|u\|_{L^2}^2$ equals that in $\d \|\phi_{c(t)}\|_{L^2}^2$. Similarly,
\[c_d^{\sigma,\epsilon}(0,c,t)=\tfrac{4}{3}\epsilon f(t)c+\sigma^2 c_d(0,c)\]
shows that the leading-order $\epsilon$-dependent term in $\d \|u\|_{L^2}^2$ equals that in $\d \|\phi_{c(t)}\|_{L^2}^2$. The result now follows by subtracting \eqref{eqn:dphi} from \eqref{eqn:du}.

\end{proof}  
Control on the stochastic integrals in \eqref{eqn:energy} is provided in the following lemma.
\begin{lemma}\label{lem:L2stoch}
Assuming \hyperlink{hyp:initial}{S1} and \hyperlink{hyp:weight}{S2}, there exists a constant $C_{18}>0$ such that for all $\tilde{v}\in L^2_w$ satisfying $\|\tilde{v}\|_{L^2_w}\leq \delta_2$, all $\tilde{\xi}\in\R$ and $ \tilde{c}\in[c_{\mathrm{min}},c_{\mathrm{max}}]$
we have the inequalities
\begin{align*}
 \big\|\langle 2\phi_{\tilde{c}}\tilde{v}+\tilde{v}^2,T_{\tilde{\xi}}\cdot\rangle\big\|_{\mathrm{HS}(L_Q^2,\R)}
 \leq C_{18} \Big(\|\tilde{v}\|_{L_w^2}+\|\tilde{v}\|^2_{L^2}\Big)
\end{align*}
and \begin{align*}
   \big\|\langle c_s(\tilde{v},{\tilde{c}})-c_s(0,{\tilde{c}}),T_{\tilde{\xi}}\cdot\rangle\big\|_{\mathrm{HS}(L_Q^2,\R)} \leq C_{18}\|\tilde{v}\|_{L_w^2}.
\end{align*}
\end{lemma}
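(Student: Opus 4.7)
The plan is to leverage Lemma 4.3, which identifies $\|\langle g,T_{\tilde{\xi}}\cdot\rangle\|_{\mathrm{HS}(L^2_Q,\R)}$ with $\|Q^{1/2}g\|_{L^2}$ independent of the translation parameter $\tilde{\xi}$ (since $Q^{1/2}$ is a convolution, it commutes with translations). This reduces both estimates to $L^2$-bounds on $Q^{1/2}$ applied to the relevant functions.

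For the second inequality the conclusion is immediate: apply the $\tilde{\xi}$-free identity from Lemma 4.3 and then invoke the bound \eqref{eqn:cscontrol} from Lemma 3.4, which already provides exactly $\|Q^{1/2}c_s(\tilde{v},\tilde{c})-Q^{1/2}c_s(0,\tilde{c})\|_{L^2}\leq C_2\|\tilde{v}\|_{L^2_w}$. So here there is nothing new to prove.

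For the first inequality I would split $2\phi_{\tilde{c}}\tilde{v}+\tilde{v}^2$ into its linear and quadratic parts. For the linear piece $2\phi_{\tilde{c}}\tilde{v}$, I would use the standard operator bound $\|Q^{1/2}g\|_{L^2}\leq \|\hat q\|_\infty^{1/2}\|g\|_{L^2}\leq \|q\|_{L^1}^{1/2}\|g\|_{L^2}$ together with the pointwise estimate $\|\phi_{\tilde{c}}\tilde{v}\|_{L^2}\leq \|e^{-w\cdot}\phi_{\tilde{c}}\|_{L^\infty}\|\tilde{v}\|_{L^2_w}$, where the sup-norm is finite uniformly in $\tilde{c}\in[c_{\min},c_{\max}]$ since $\phi_{\tilde{c}}$ decays like $e^{-\sqrt{\tilde{c}}|x|}$ and $w<\sqrt{c_{\min}}/3<\sqrt{\tilde{c}}$. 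For the quadratic piece $\tilde{v}^2$ the operator bound through $\|g\|_{L^2}$ would yield the wrong power, so here I would switch to Young's convolution inequality: writing $Q^{1/2}(\tilde{v}^2)=q_{1/2}\ast \tilde{v}^2$ gives
\[
\|Q^{1/2}(\tilde{v}^2)\|_{L^2}\leq \|q_{1/2}\|_{L^2}\|\tilde{v}^2\|_{L^1}=\|q_{1/2}\|_{L^2}\|\tilde{v}\|_{L^2}^2,
\]
and $\|q_{1/2}\|_{L^2}^2=\|\sqrt{\hat q}\|_{L^2}^2=\|\hat q\|_{L^1}<\infty$ by the same calculation used at the end of the proof of Lemma 4.3. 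Combining the two pieces yields the desired constant $C_{18}$.

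The only genuinely instructive step is the nonlinear one: an operator-norm bound on $Q^{1/2}:L^2\to L^2$ would only produce $\|\tilde{v}^2\|_{L^2}$, which cannot be controlled by $\|\tilde{v}\|_{L^2}^2$. Using Young's inequality with $q_{1/2}\in L^2$ and $\tilde{v}^2\in L^1$ instead of $L^2$ is what allows the quadratic $\|\tilde{v}\|_{L^2}^2$ on the right-hand side, matching the energy quantity that appears naturally in \eqref{eqn:energy}.
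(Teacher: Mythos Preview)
Your proof is correct and follows essentially the same route as the paper: reduce via Lemma~4.3 to $\|Q^{1/2}g\|_{L^2}$, handle the linear piece $\phi_{\tilde c}\tilde v$ with the operator bound $\|Q^{1/2}\|_{L^2\to L^2}\le\|q\|_{L^1}^{1/2}$ and a weighted pointwise estimate, and treat $\tilde v^2$ through its $L^1$-norm; the second inequality is exactly Lemma~4.3 combined with \eqref{eqn:cscontrol}. The only cosmetic difference is that for the quadratic term you rederive the $L^1$-to-$L^2$ bound on $Q^{1/2}$ via Young's inequality with $q_{1/2}\in L^2$, whereas the paper simply invokes the last line of Lemma~4.3 (whose proof is the Fourier-side version of the same estimate, giving the identical constant $\|\hat q\|_{L^1}^{1/2}$).
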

\begin{proof}
Applying \Cref{lem:HS}, we have
\begin{align*}
\big\|\langle 2\phi_{\tilde{c}}\tilde{v}+\tilde{v}^2,T_{\tilde{\xi}}\cdot\rangle\big\|_{\mathrm{HS}(L_Q^2,\R)}
 \leq&  2\|Q^{1/2}(\phi_{\tilde{c}}\tilde{v})\|_{L^2}+C_{10}\|\tilde{v}^2\|_{L^1}\\
 \leq & \|q\|^{1/2}_{L^1} \|\phi_{\tilde{c}}\|_{L^2_{-w}}\|\tilde{v}\|_{L^2_w}+C_{10}\|\tilde{v}\|_{L^2}^2,
\end{align*}
and
\begin{align*}
    \big\|\langle c_s(\tilde{v},{\tilde{c}})-c_s(0,{\tilde{c}}),T_{\tilde{\xi}}\cdot\rangle\big\|_{\mathrm{HS}(L_Q^2,\R)}= \big\|Q^{1/2}c_s(\tilde{v},{\tilde{c}})-Q^{1/2}c_s(0,{\tilde{c}})\big\|_{L^2}.
\end{align*}
The result follows by applying \Cref{lem:modcontrol}.
\end{proof}


\begin{proof}[Proof of \Cref{prop:l2control}]
Using \eqref{eqn:energy}, we may $\mathbb{P}$-almost surely estimate 
 \begin{align*}
          \|v(t)\|_{L^2}^2\leq&C_2\int_0^t \big(\sigma^2+2\epsilon |f(t')|\big)\|v(t')\|_{L^2}^{2}\d t'+C_2 t \left(\sigma^2+\epsilon\eta+ \eta^2 \right)\\
   &9\sigma \Big|\int_0^t \big\langle c_s(v,c)-c_s(0,c),\d W_{t'}^Q\big\rangle\Big|+2\sigma \Big|\int_0^t\big\langle 2\phi_cv+v^2,T_{\xi} \d W_{t'}^Q\big\rangle\Big|,
   \end{align*}
for $t\in [0,t_c\wedge t_{\mathrm{st}}(\eta)]$, where we have controlled the modulation parameters in the deterministic integrals via \Cref{lem:modcontrol}. After taking a supremum and expectations,
 \begin{align}\label{eqn:left}
          \mathbb{E}\sup_{0\leq t' \leq T\wedge t_{c}\wedge t_{\mathrm{st}}(\eta)}\|v(t')\|_{L^2}^2\leq&C_2\int_0^T \big(\sigma^2+2\epsilon |f(t)|\big)\mathbb{E}\sup_{0\leq t'\leq t\wedge t_{c}\wedge t_{\mathrm{st}}(\eta)}\|v(t')\|_{L^2}^{2}\d t\\
          &+C_2 T \left(\sigma^2+\epsilon\eta+ \eta^2 \right)+\sigma I(T),\nonumber\end{align}
where we have abbreviated
\begin{align*}
I(T)=&
9 \mathbb{E}\sup_{0\leq t \leq T\wedge t_{c}\wedge t_{\mathrm{st}}(\eta)}\Big|\int_0^t \big\langle c_s(v,c)-c_s(0,c),\d W_{t'}^Q\big\rangle\Big|\\
&+2  \mathbb{E}\sup_{0\leq t \leq T\wedge t_{c}\wedge t_{\mathrm{st}}(\eta)}\Big|\int_0^t\big\langle 2\phi_cv+v^2,T_{\xi} \d W_{t'}^Q\big\rangle\Big|.
   \end{align*}
The Burkholder-Davis-Gundy inequality \cite[proposition 2.1]{BDG} together with \Cref{lem:L2stoch} yields the control
\begin{align*}
   I(T)
   \leq& C_{18} \mathbb{E}\Big[\Big(\int_0^{T\wedge t_{c}\wedge t_{\mathrm{st}}(\eta)} \|v(t)\|^2_{L^2_w}+ \|v(t)\|_{L^2}^4\d t\Big)^{1/2}\Big]\\
   \leq& C_{18}\eta \sqrt{T}+C_{18}\sqrt{T} \mathbb{E}\sup_{0\leq t \leq T\wedge t_{c}\wedge t_{\mathrm{st}}(\eta)} \|v(t)\|_{L^2}^2.
   \end{align*}
If $\delta_{16}$ is small enough to ensure $C_{18}\sigma \sqrt{T}\leq 1/2$, we may bring this last term to the left in \eqref{eqn:left}:
 \begin{align*}
          \tfrac{1}{2}\mathbb{E}\sup_{0\leq t \leq T\wedge t_{c}\wedge t_{\mathrm{st}}(\eta)}\|v(t)\|_{L^2}^2\leq&C_2\int_0^T \big(\sigma^2+2\epsilon |f(t)|\big)\mathbb{E}\sup_{0\leq t'\leq t\wedge t_{c}\wedge t_{\mathrm{st}}(\eta)}\|v(t')\|_{L^2}^{2}\d t\\
          &+C_2 T \left(\sigma^2+\epsilon\eta+ \eta^2 \right)+C_2\sigma\sqrt{T}\eta.\end{align*}
Grönwall's inequality then yields
    \begin{align*}
   \mathbb{E}\sup_{0\leq t \leq T\wedge t_{c}\wedge t_{\mathrm{st}}(\eta)}\|v(t)\|_{L^2}^2\leq& 2C_2e^{C_2\sigma^2 T}e^{2C_2\epsilon\int_0^T|f(t)|\d t}\Big(T \left(\sigma^2+\epsilon\eta+ \eta^2 \right)+\sigma\sqrt{T}\eta\Big).
   \end{align*}
Noting that \hyperlink{hyp:decay}{C2} implies
\[\epsilon\int_0^T|f(t)|\d t\leq E,\]
the result now follows via Markov's inequality:
\begin{align*}
    \mathbb{P}\Big[\sup_{0\leq t \leq T\wedge t_{c}\wedge t_{\mathrm{st}}(\eta)}\|v(t)\|_{L^2}^2\geq \lambda\Big]
    \leq &2C_2e^{C_2\delta_{16}+2C_2E}\lambda^{-1}\Big(T\big( \sigma^2+\epsilon\eta+\eta^2\big)+\sigma\sqrt{T} \eta\Big).
\end{align*}
\end{proof}


\section{Nonlinear stability}\label{sec:proofmain}
In this section, we collect our results and prove the stability result \Cref{thm:bigthm}. Our goal here is to show that the event $t_{\mathrm{st}}(\eta)\geq T$ occurs with high probability, by ensuring that 
   \begin{itemize}
       \item[--] the unweighted $L^2$-norm of $v(t)$ remains under control on $[0,T]$ (\Cref{prop:l2control});
       \item[--] the difference between the local and global modulation parameters remains under control on $[0,T]$ (\Cref{prop:controlofmod});
       \item[--] the weighted norm of $v(t)$ remains under control on $[0,T]$ (\Cref{prop:remainder}).
   \end{itemize}
Although our primary interest lies in the latter, our proof requires all of the above to hold. Since the stability results of \Cref{sec:weighted} and \Cref{sec:parameters} hold on intervals of length $\Delta T$, we partition the interval $[0,T]$ into
\[[0,T]\subseteq \bigcup_{n=0}^{\lceil T/\Delta T\rceil-1}[n\Delta T,(n+1)\Delta T),\]
and seek to establish stability on each intermediate interval $[n\Delta T,(n+1)\Delta T)$.
   Consider, therefore, for each $n\in\N_0$ the stability event
    $\mathcal{S}_n(\eta)\subseteq \Omega$, defined as 
    the set of $\omega\in \Omega$ for which:
    \begin{enumerate}[(i)]
    \item the local stopping times at $n\Delta T$ satisfy
    \[\tau^{n\Delta T}_{\mathrm{mod}}(\eta),\tau^{n\Delta T}_{\mathrm{en}}(\delta_*),\tau^{n\Delta T}_{\mathrm{st}}(\eta)\geq \Delta T;\]
    \item at the end point $(n+1)\Delta T$ we have
    \begin{align*}
    \|v^{n\Delta T}(\Delta T)\|_{H^1_w}\leq \tfrac{\eta}{9M},\quad \|v((n+1)\Delta T)\|_{H^1_w}\leq \tfrac{\eta}{3M}.
    \end{align*}
    \end{enumerate}
    
    The following result allows us to establish high probability of the local stability events 
    in a recursive manner: we bound the probability that $\mathcal{S}_{n}(\eta)$ fails to hold, provided  $\mathcal{S}_{n-1}(\eta)$ occurred. We do so under the condition that the global modulation system is under control. In particular, for each $n\in \N$, we define the stability event $\mathcal{G}_n(\eta)$ as the set of $\omega\in \Omega$ for which the global stopping times
    $t_c$ and $t_{\mathrm{en}}$ reach
    \begin{align}\label{eqn:globalevent}
    \quad t_c, t_{\mathrm{en}}(\delta_*/2)\geq n\Delta T+ \min\{\tau^{n\Delta T}_{\mathrm{mod}}(\eta),\tau^{n\Delta T}_{\mathrm{en}}(\delta_*),\tau^{n\Delta T}_{\mathrm{st}}(\eta)\} .\end{align}
 Below, $S_{n}^c$ denotes the complement $\Omega \setminus S_{n}$.

\begin{proposition}\label{prop:chain}
   Assuming \hyperlink{hyp:initial}{S1}, \hyperlink{hyp:weight}{S2} and \hyperlink{hyp:constants}{C1}, there exist constants $C_{19}, \delta_{19}>0$, such that for each $n\in\N$ and each $\eta\in[0,\eta_0]$ the stability events $\mathcal{S}_{n-1},\mathcal{S}_{n}(\eta)$ and $\mathcal{G}_n(\eta)$ satisfy
\[\mathbb{P}\Big[\mathcal{S}_{n-1}(\eta)\cap \mathcal{G}_n(\eta)\cap\mathcal{S}^c_{n}(\eta)\Big]\leq  C_{19}e^{-\delta_{19}\eta^2/\sigma^2}.\]
\end{proposition}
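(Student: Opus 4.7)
The plan is to decompose the failure event $\mathcal{S}_{n-1}(\eta) \cap \mathcal{G}_n(\eta) \cap \mathcal{S}_n^c(\eta)$ by the \textit{first} component of $\mathcal{S}_n$ to fail on $[T, T+\Delta T]$ where $T = n\Delta T$. Write
\[\tau := \min\big\{\tau^{T}_{\mathrm{mod}}(\eta),\ \tau^{T}_{\mathrm{en}}(\delta_*),\ \tau^{T}_{\mathrm{st}}(\eta),\ \Delta T\big\}.\]
Throughout we exploit that $\mathcal{S}_{n-1}(\eta)$ supplies the initial condition $\|v(T)\|_{H^1_w}\leq \eta/(3M)$ for the local system at time $T$, and that $\mathcal{G}_n(\eta)$ supplies $c(T+s)\in[c_{\mathrm{min}},c_{\mathrm{max}}]$ and $\|v(T+s)\|_{L^2}\leq \delta_*/2$ for $s\in[0,\tau]$.

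The first thing I would verify is that, given $\mathcal{S}_{n-1}(\eta)\cap\mathcal{G}_n(\eta)$, the stopping time $\tau^T_{\mathrm{en}}(\delta_*)$ cannot be the first to trigger. To that end I would establish an $L^2$-analogue of \Cref{lem:correspondence}, using the same Lipschitz dependence of $c\mapsto \phi_c$ from $[0,2c_{\mathrm{max}}]$ to $L^2$: on $\{\tau^T_{\mathrm{mod}}(\eta)\geq s\}$ we have
\[\|v^T(s)\|_{L^2}\leq \|v(T+s)\|_{L^2}+\tilde{C}\big(|c^T(s)-c(T+s)|+|\xi^T(s)-\xi(T+s)|\big)\leq \tfrac{\delta_*}{2}+\tilde{C}(1+2\Delta T)\delta_*\eta.\]
Since $\eta_0$ in \hyperlink{hyp:constants}{C1} is chosen small enough (relative to $\Delta T$), this is strictly below $\delta_*$, so $\tau^T_{\mathrm{en}}(\delta_*)>\tau^T_{\mathrm{mod}}(\eta)\wedge\tau^T_{\mathrm{st}}(\eta)\wedge\Delta T$ holds deterministically on the conditioning event.

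With that subcase eliminated, three failure modes remain and each is covered by a previously proved tail estimate:
\begin{itemize}
\item If $\tau^T_{\mathrm{st}}(\eta)<\Delta T$ triggers first, we are exactly in the event $\mathcal{E}_1$ of \Cref{prop:remainder}, and the initial bound from $\mathcal{S}_{n-1}(\eta)$ yields probability at most $e^{-\delta_9\eta^2/\sigma^2}$.
\item If $\tau^T_{\mathrm{mod}}(\eta)<\Delta T$ triggers first (so $\tau^T_{\mathrm{mod}}(\eta)\leq\tau^T_{\mathrm{st}}(\eta)$), \Cref{prop:controlofmod} gives probability at most $e^{-\delta_{12}\eta^2/\sigma^2}$; the required side condition $T+\tau^T_{\mathrm{mod}}\leq t_c$ is supplied by $\mathcal{G}_n(\eta)$.
\item If $\tau=\Delta T$ but $\|v^T(\Delta T)\|_{H^1_w}>\eta/(9M)$, we are in event $\mathcal{E}_2$ of \Cref{prop:remainder}, again bounded by $e^{-\delta_9\eta^2/\sigma^2}$.
\end{itemize}

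The final case to handle is $\tau=\Delta T$ with $\|v^T(\Delta T)\|_{H^1_w}\leq \eta/(9M)$ but $\|v(T+\Delta T)\|_{H^1_w}>\eta/(3M)$; this one I would rule out deterministically using \Cref{lem:correspondence}, which gives
\[\|v(T+\Delta T)\|_{H^1_w}\leq e^{w\cdot 2\Delta T\delta_*\eta}\cdot\tfrac{\eta}{9M}+C_4\big(\delta_*\eta+2\Delta T\delta_*\eta\big).\]
The choices $\delta_*\leq 1/(e^2+3C_4)$ and $\eta\leq\eta_0$ in \hyperlink{hyp:constants}{C1} ensure the right-hand side stays below $\eta/(3M)$, so this case contributes zero probability. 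A union bound across the three stochastic subcases yields the proposition with $\delta_{19}=\min(\delta_9,\delta_{12})$ and a universal constant $C_{19}$. The main technical obstacle is the deterministic ruling-out steps: checking that the constants in \hyperlink{hyp:constants}{C1} have been tuned exactly so that the correspondence lemma closes the gap between $\eta/(9M)$ locally and $\eta/(3M)$ globally, and so that the $L^2$ correspondence forbids $\tau^T_{\mathrm{en}}$ from triggering prematurely.
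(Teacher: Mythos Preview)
Your proposal is correct and follows essentially the same route as the paper: decompose the failure of $\mathcal{S}_n$ according to which local stopping time triggers first, rule out $\tau^T_{\mathrm{en}}(\delta_*)$ deterministically via an $L^2$-correspondence between $v^T$ and $v$, bound the $\tau^T_{\mathrm{mod}}$ and $\tau^T_{\mathrm{st}}$ subcases by \Cref{prop:controlofmod} and \Cref{prop:remainder} respectively, and close the endpoint condition via \Cref{lem:correspondence}. The only cosmetic difference is that you spell out the unweighted $L^2$-analogue of \Cref{lem:correspondence} explicitly, whereas the paper invokes the correspondence lemma directly (and somewhat tersely) for that step.
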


    

\begin{proof}
Let us write 
\[\bar{t}=t_c\wedge t_{\mathrm{en}}(\delta_*/2)\quad \text{and} \quad \tau=\tau^{n\Delta T}_{\mathrm{mod}}(\eta)\wedge\tau^{n\Delta T}_{\mathrm{en}}(\delta_*)\wedge\tau^{n\Delta T}_{\mathrm{st}}(\eta).\] 
Assuming $\mathcal{S}_{n-1}$, we distinguish three scenarios through which condition (i) in  $\mathcal{S}_{n}$ can fail to hold: 
\begin{align*}
    \mathcal{A}_1=&\big\{\tau^{n\Delta T}_{\mathrm{mod}}(\eta)<\Delta T\big\}\cap \{n\Delta T+\tau^{n\Delta T}_{\mathrm{mod}}(\eta)\leq \bar{t}\}\cap\{\tau^{n\Delta T}_{\mathrm{mod}}(\eta)\leq \tau\};\\
   \mathcal{A}_2=&\big\{\tau^{n\Delta T}_{\mathrm{en}}(\delta_*)<\Delta T\big\}\cap \{n\Delta T+\tau^{n\Delta T}_{\mathrm{en}}(\delta_*)\leq \bar{t}\}\cap\{\tau^{n\Delta T}_{\mathrm{en}}(\delta_*)\leq \tau\};\\
   \mathcal{A}_3=&\big\{\tau^{n\Delta T}_{\mathrm{st}}(\eta)<\Delta T\big\}\cap \{n\Delta T+\tau^{n\Delta T}_{\mathrm{st}}(\eta)\leq \bar{t}\}\cap\{\tau^{n\Delta T}_{\mathrm{st}}(\eta)\leq \tau\}.
\end{align*}
The events $\mathcal{A}_1,\mathcal{A}_2$ and $\mathcal{A}_3$ categorize which of the stopping times was first activated before time $(n+1)\Delta T$. Thus, the event that item (i) of $S_n$ fails to hold coincides with 
\[\mathcal{A}=\mathcal{A}_1\cup\mathcal{A}_2\cup\mathcal{A}_3.\]
We proceed by estimating the probabilities of the events $\mathcal{A}_1,\mathcal{A}_2$ and $\mathcal{A}_3$.
\begin{enumerate}
    \item  The event $\mathcal{A}_1$ while also $ \mathcal{S}_{n-1}(\eta)$ implies 
    \[\tau^{n\Delta T}_{\mathrm{mod}}(\eta) \leq \tau^{n\Delta T}_{\mathrm{st}}(\eta)\wedge \Delta T \quad \text{and}\quad n\Delta T+\tau_{\mathrm{mod}}^T(\eta)  \leq t_c.\]
    \Cref{prop:controlofmod} then gives
    \[\mathbb{P}\Big[ \mathcal{S}_{n-1}(\eta)\cap \mathcal{G}_n(\eta)\cap \mathcal{A}_1\Big]\leq e^{-\delta_{12}\eta^2/\sigma^2}.\]
    \item  The event $\mathcal{A}_2$ while also $\mathcal{S}_{n-1}(\eta)$ implies
\begin{align}0<\tau^{n\Delta T}_{\mathrm{en}}(\delta_*)\leq \tau^{n\Delta T}_{\mathrm{mod}}(\eta) \quad \text{and} \quad  n\Delta T+\tau^{n\Delta T}_{\mathrm{en}}(\delta_*)\leq t_{\mathrm{en}}(\delta_*/2)\wedge t_c.\label{eqn:contradict}\end{align}
This event has probability zero. Indeed, we have
\[c^T(s),c(T+s)\in [\tfrac{1}{2}c_{\mathrm{min}},2c_{\mathrm{max}}],\quad \text{for}\quad s\in [0,\tau_{\mathrm{mod}}^{n\Delta T}(\eta)\wedge (t_c-n\Delta T)].\]
\Cref{lem:correspondence} then gives
\[\|v^T(s)\|_{H^1}\leq 2\eta+\|v(T+s)\|_{H^1} <\delta_*,
\]
which contradicts \eqref{eqn:contradict}.
    \item The event $\mathcal{A}_3$ while also $\mathcal{S}_{n-1}(\eta)$ implies
    \[\tau^{n\Delta T}_{\mathrm{st}}(\eta)\leq \Delta T \ \ \text{while} \quad \|v(n\Delta T)\|_{H^1_w}\leq \tfrac{\eta}{3M}\]
    and
    \[\tau^{n\Delta T}_{\mathrm{st}}(\eta) \leq \min\big\{\tau^{n\Delta T}_{\mathrm{en}}(\delta_*),\tau^{n\Delta T}_{\mathrm{mod}}(\eta),\tau^{n\Delta T}_c\big\}.\]
    Via \Cref{prop:remainder}, we then obtain
    \[\mathbb{P}\Big[\mathcal{S}_{n-1}(\eta)\cap \mathcal{G}_n(\eta)\cap\mathcal{A}_3\Big]\leq e^{-\delta_9\eta^2/\sigma^2}.\]
\end{enumerate}
Summarizing our results so far, we have shown that
\[\mathbb{P}\Big[\mathcal{S}_{n-1}(\eta)\cap \mathcal{G}_n(\eta) \cap \mathcal{A}\Big]\leq e^{-\delta_{12}\eta^2/\sigma^2}+e^{-\delta_9\eta^2/\sigma^2}.\]
To complete the proof, we turn our attention to item (ii) of $\mathcal{S}_n$, which fails to hold in the event that
\[\mathcal{B}=\{
    \|v^{n\Delta T}(\Delta T)\|_{H^1_w}> \tfrac{\eta}{9M}\} \cup \{\|v((n+1)\Delta T)\|_{H^1_w}> \tfrac{\eta}{3M}\}.\]
Note that the event
\[\|v^{n\Delta T}(\Delta T)\|_{H^1_w}> \tfrac{\eta}{9M}\quad \text{while} \quad \mathcal{S}_{n-1}(\eta)\quad \text{and} \quad \mathcal{A}^c \quad \text{hold}\] 
occurs with probability less than $e^{-\delta_9\eta^2/\sigma^2}$ through \Cref{prop:remainder}. In the likely case that \[\|v^{n\Delta T}(\Delta T)\|_{H^1_w}\leq \tfrac{\eta}{9M},\] 
it follows via \Cref{lem:correspondence} that also \[\|v((n+1)\Delta T)\|_{H^1_w}\leq \tfrac{\eta}{3M}.\]
Hence, 
\[\mathbb{P}\Big[\mathcal{S}_{n-1}(\eta) \cap \mathcal{G}_n(\eta)\cap \mathcal{B}\cap \mathcal{A}^c\Big]\leq e^{-\delta_9\eta^2/\sigma^2}.\]
The proof is then completed by noting that
\begin{align*}
   \mathcal{S}_{n-1}(\eta)\cap \mathcal{G}_n(\eta) \cap \mathcal{S}_n^c=\Big(\mathcal{S}_{n-1}(\eta)\cap \mathcal{G}_n(\eta) \cap \mathcal{A}\Big)\cup \Big(\mathcal{S}_{n-1}(\eta)\cap \mathcal{G}_n(\eta) \cap \mathcal{B}\Big),
\end{align*}
where the probability of the latter can be estimated as
\[\mathbb{P}\Big[\mathcal{S}_{n-1}(\eta)\cap \mathcal{G}_n(\eta) \cap \mathcal{B}\Big]\leq \mathbb{P}\Big[\mathcal{S}_{n-1}(\eta)\cap \mathcal{G}_n(\eta)\cap \mathcal{A}\Big]+\mathbb{P}\Big[\mathcal{S}_{n-1}(\eta)\cap \mathcal{G}_n(\eta) \cap \mathcal{B}\cap \mathcal{A}^c\Big]. \]
\end{proof}

We are now in shape to prove \Cref{thm:bigthm}. Given $c_*,E>0$, we fix $c_{\mathrm{min}}$ and $c_{\mathrm{max}}$ via \hyperlink{hyp:decay}{C2}. Picking  $w\in(0,\sqrt{c_{\mathrm{min}}/3})$ then ensures that we are in the setting of \hyperlink{hyp:weight}{S2}. Lastly, let the constants $\Delta T,\delta_*$ and  $\eta_0$ satisfy \hyperlink{hyp:constants}{C1}. We set out to control the probability of the slightly larger event 
\[\mathcal{C}(\eta)=\{\min\{t_{\mathrm{st}}(2\eta),t_c,t_{\mathrm{en}}(\delta_*/2)\}<T\}.\] 
Writing $\overline{t}=\min\{t_{\mathrm{st}}(2\eta),t_c,t_{\mathrm{en}}(\delta_*/2)\}$, we categorize $\mathcal{C}(\eta)$ into three scenarios:
\begin{align*}
    \mathcal{C}_1(\eta)=&\{t_c<T\}\cap \{t_c\leq \overline{t}\};\\
    \mathcal{C}_2(\eta)=&\{t_{\mathrm{en}}(\delta_*/2)<T\}\cap \{t_{\mathrm{en}}(\delta_*/2)\leq \overline{t}\};\\
    \mathcal{C}_3(\eta)=&\{t_{\mathrm{st}}(2\eta)<T\}\cap \{t_{\mathrm{st}}(2\eta)\leq \overline{t}\},   
\end{align*}
corresponding to which of the stopping times is hit first.
We now subdivide the event $\mathcal{C}_3(\eta)$ by noting that
for each realisation $\omega\in\mathcal{C}_3(\eta)$, the stopping time $t_{\mathrm{st}}(2\eta)(\omega)$ is contained in an interval
\[\big[n(\omega)\Delta T, n(\omega)\Delta T+\Delta T\big),\quad n(\omega)\in \{0, 1,\ldots,\lceil T/\Delta T \rceil-1\}.\]
In view of \Cref{lem:monotonicity}, we in turn find that
   \[n(\omega)\Delta T+\min\{\tau^{n(\omega)\Delta T}_{\mathrm{st}}(\eta)(\omega),\tau^{n(\omega)\Delta T}_{\mathrm{mod}}(\eta)(\omega)\}\leq t_\mathrm{st}(2\eta)(\omega).\]
Hence, $\mathcal{C}_3(\eta)$ implies that either
\[ \mathcal{C}_{3;i}(\eta):=\bigcup_{n=1}^{\lceil T/\Delta T \rceil-1} \Big( \mathcal{S}_{n-1}(\eta)\cap\mathcal{G}_{n}(\eta) \cap\mathcal{S}^c_n(\eta)\Big);\]
i.e. the `chain' of stable events was interrupted, or stability failed on the first interval
\[ \mathcal{C}_{3;ii}(\eta):=\mathcal{S}_{0}^c(\eta)\cap\mathcal{G}_{0}(\eta).\]
In summary, we have obtained
\begin{equation}
\label{eq:stb:decomp:c}
    \mathcal{C}(\eta)\subseteq\mathcal{C}_1(\eta)\cup\mathcal{C}_2(\eta)\cup\mathcal{C}_{3;i}(\eta)\cup\mathcal{C}_{3;ii}(\eta).
\end{equation}
Each of these events can be readily controlled using our prior results.




\begin{proof}[Proof of \Cref{thm:bigthm}]
For ease of exposition, we consider $T\geq 1$ for which $T/(\Delta T)\in \N$. We proceed by bounding the probability of each of the events $\mathcal{C}_1(\eta), \mathcal{C}_2(\eta), \mathcal{C}_{3;i}(\eta)$ and $\mathcal{C}_{3;ii}(\eta)$ in \eqref{eq:stb:decomp:c}.
\begin{enumerate}
    \item \Cref{prop:tccontrol} gives
    \begin{align}\mathbb{P}[\mathcal{C}_1(\eta)]\leq e^{-\delta_{17}/(\sigma^2 T)}.\label{eqn:P1}\end{align}
    \item \Cref{prop:l2control} gives
\begin{align}\mathbb{P}\Big[\mathcal{C}_2(\eta)\Big]\leq C_{16}(\delta_*/2)^{-1}\Big(T\big( \sigma^2+2\epsilon\eta+4\eta^2\big)+2\sqrt{T}\sigma  \eta\Big).\label{eqn:P2}\end{align}
\item Applying \Cref{prop:chain} on the $T/\Delta T-1$ intervals yields
\begin{align}\mathbb{P}[\mathcal{C}_{3;i}(\eta)]\leq  \Big(\frac{T}{\Delta T}-1\Big) C_{19}e^{-\delta_{19}\eta^2/\sigma^2}.\label{eqn:P31}\end{align}
Similarly,
\begin{align}
    \mathbb{P}[\mathcal{C}_{3;ii}(\eta)]\leq C_{19}e^{-\delta_{19}\eta^2/\sigma^2}. \label{eqn:P32}
\end{align}
\end{enumerate}
The bounds \eqref{eqn:P1}, \eqref{eqn:P2}, \eqref{eqn:P31} and \eqref{eqn:P32} together with the union bound
\[\mathbb{P}[\mathcal{C}(\eta)]\leq \mathbb{P}[\mathcal{C}_1(\eta)]+\mathbb{P}[\mathcal{C}_2(\eta)]+\mathbb{P}[\mathcal{C}_{3;i}(\eta)]+\mathbb{P}[\mathcal{C}_{3;ii}(\eta)]\]
imply that 
\begin{align}\mathbb{P}[t_{\mathrm{st}}(2\eta)<T]\leq N(\eta,\sigma,T),\label{eqn:Neta}\end{align}
where
\[N(\eta,\sigma,T)=\tilde{C}T(\eta^2+e^{-\delta_{19} \eta^2/\sigma^2}),\]
for a sufficiently large constant $\tilde{C}>0$. Observe now that for any $\tilde{\eta}\in [0,\eta]$, it $\mathbb{P}$-almost surely holds that $t_{\mathrm{st}}(2\tilde{\eta})\leq t_{\mathrm{st}}(2\eta)$. We hence have the inclusion
\[\{t_{\mathrm{st}}(2\eta)<T\}\subseteq \{t_{\mathrm{st}}(2\tilde{\eta})<T\},\]
allowing \eqref{eqn:Neta} to be improved to
\[\mathbb{P}[t_{\mathrm{st}}(2\eta)<T]\leq \inf_{0\leq \tilde{\eta}\leq \eta}N(\eta,\sigma,\epsilon,T).\]
For $\sigma^2<  \delta_{19}$, we compute that $N(\eta,\sigma,T)$ is minimized at $\tilde{\eta}^2=\frac{\sigma^2}{\delta_{19}}\log(\frac{\delta_{19}}{\sigma^2})$, and 
\[\inf_{\tilde{\eta}\geq 0 }N(\eta,\sigma,T)=\frac{\tilde{C}}{\delta_{19}}T{\sigma^2}\Big(\log\Big(\frac{\delta_{19}}{\sigma^2}\Big)+1\Big).\]
Hence, in case $\eta^2\leq \frac{\sigma^2}{\delta_{19}}\log(\frac{\delta_{19}}{\sigma^2})$, we have 
\[\mathbb{P}[t_{\mathrm{st}}(2\eta)<T]\leq N(\eta,\sigma,T)\leq  \tilde{C}T\Big(\frac{\sigma^2}{\delta_{19}}\log\Big(\frac{\delta_{19}}{\sigma^2}\Big)+e^{-\delta_{19} \eta^2/\sigma^2}\Big).\]
On the other hand, for $\eta^2> \frac{\sigma^2}{\delta_{19}}\log(\frac{\delta_{19}}{\sigma^2})$, we find
\[\mathbb{P}[t_{\mathrm{st}}(2\eta)<T]\leq \inf_{\tilde{\eta}\geq 0 }N(\eta,\sigma,T)=\frac{\tilde{C}}{\delta_{19}}T{\sigma^2}\Big(\log\Big(\frac{\delta_{19}}{\sigma^2}\Big)+1\Big).\]
Both estimates can be absorbed
by the bound
\eqref{eqn:finalresult}, completing the proof.
\end{proof}

\section{Validity of reduced dynamics}\label{sec:validity}
Our goal here is to establish the remaining approximation result \Cref{thm:validity}. This result concerns the validity of the approximation $c_{\mathrm{ap}}(t)$ defined in \eqref{eqn:cap} for the soliton amplitude $c(t)$. We thus set out to analyze the evolution equation
        \begin{align}
        \d\big(c(t)-c_{\mathrm{ap}}(t)\big)= & c^0_d(v,c)\ \d t+\epsilon f(t)\big(c_f(v,c)-\tfrac{4}{3}c_{\mathrm{ap}}\big)\ \d t+\sigma^2 \big(c_d(v,c)-c_d(0,c_{\mathrm{ap}})\big) \ \d t\nonumber\\
        &+\sigma \big\langle c_s(v,c)-\tfrac{2}{9}c_{\mathrm{ap}}^{-1/2}\phi_{c_{\mathrm{ap}}}^2, T_{\xi} \d W_t^Q\big\rangle, \label{eqn:ccap}
    \end{align}
    which one finds by subtracting \eqref{eqn:cap} from \eqref{eqn:postulate1}.
    Recalling the constants $C_2$ and $\delta_2$ introduced in \Cref{lem:modcontrol}, we obtain the following useful bounds on 
    the terms above.
    \begin{lemma}\label{lem:lip}
            Assuming \hyperlink{hyp:initial}{S1} and  \hyperlink{hyp:weight}{S2}, there exists a constant $C_{20}>0$ so that for 
            all $c_1,c_{2}\in[\tfrac{1}{2}c_{\mathrm{min}},2c_{\mathrm{max}}]$ 
            and all $v\in L^2_w$ that satisfy $\|v\|_{L^2_w}\leq\delta_2$, we have
    \begin{align*}
    \big|c_f(v,c_1)-\tfrac{4}{3}c_{2}\big|\leq &C_2\|v\|_{L_w^2}+\tfrac{4}{3}|c_1-c_{2}|,\\
    \big|c_d(v,c_1)-c_d(0,c_{2})\big|\leq& C_2\big(1+\|v\|_{L^2_w}+\|v\|_{L^2_w}^2\big)\|v\|_{L^2_w}+C_{20}|c_1-c_{2}|, \\
    \big\|Q^{1/2}c_s(v,c_1)-Q^{1/2}[\tfrac{2}{9}c_{2}^{-1/2}\phi_{c_{2}}^2]\big\|_{L^2}\leq& C_2\|v\|_{L_w^2}+C_{20}|c_1-c_{2}|.\end{align*}
    \end{lemma}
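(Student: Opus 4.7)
The plan is to establish each of the three bounds by splitting the difference into a $v$-dependent piece (estimated via \Cref{lem:modcontrol}) and a purely $c$-dependent piece, which we control by Lipschitz-type estimates of the explicit formulas for the modulation functions at $v=0$, exploiting the fact that $c_1, c_2$ range over the compact interval $[\tfrac{1}{2}c_{\min}, 2c_{\max}]$. The key preliminary observation is that the closed-form inverse of $K_c(0)$ obtained in the proof of \Cref{lem:modcontrol} yields the explicit identities
\[c_f(0,c) = \tfrac{4}{3}c \qquad \text{and}\qquad c_s(0,c) = \tfrac{2}{9}c^{-1/2}\phi_c^2,\]
the first of which is already used in the proof of \Cref{lem:logc}.

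For the first bound, I would insert $c_f(0,c_1)$ and use the triangle inequality:
\[\big|c_f(v,c_1) - \tfrac{4}{3}c_2\big| \leq \big|c_f(v,c_1) - c_f(0,c_1)\big| + \big|\tfrac{4}{3}c_1 - \tfrac{4}{3}c_2\big|.\]
The first term is controlled by \eqref{eqn:c?control} of \Cref{lem:modcontrol} and the second is exactly $\tfrac{4}{3}|c_1 - c_2|$, giving the claim with no loss of constant.

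For the second bound, I insert $c_d(0,c_1)$ and split
\[\big|c_d(v,c_1) - c_d(0,c_2)\big| \leq \big|c_d(v,c_1) - c_d(0,c_1)\big| + \big|c_d(0,c_1) - c_d(0,c_2)\big|.\]
The first term is handled by \eqref{eqn:cdcontrol}. For the second, the explicit representation of $c_d(0,c)$ expresses it as a rational expression in $c$ involving integrals of $\phi_c, \zeta_c, \partial_c^2\phi_c$ and similar objects paired against $Q^{1/2}$ of such functions. Each factor is smooth in $c$ on $[\tfrac{1}{2}c_{\min},2c_{\max}]$ (using the scaling $\phi_c(x) = c\phi_1(\sqrt{c}x)$ and the explicit form of $K_c(0)^{-1}$), so $c\mapsto c_d(0,c)$ is $C^1$ on this compact interval, yielding a Lipschitz bound with some constant $C_{20}$.

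For the third bound, insert $Q^{1/2}c_s(0,c_1) = Q^{1/2}[\tfrac{2}{9}c_1^{-1/2}\phi_{c_1}^2]$ to split
\[\big\|Q^{1/2}c_s(v,c_1) - Q^{1/2}[\tfrac{2}{9}c_2^{-1/2}\phi_{c_2}^2]\big\|_{L^2} \leq \big\|Q^{1/2}c_s(v,c_1) - Q^{1/2}c_s(0,c_1)\big\|_{L^2} + \big\|Q^{1/2}[\tfrac{2}{9}c_1^{-1/2}\phi_{c_1}^2 - \tfrac{2}{9}c_2^{-1/2}\phi_{c_2}^2]\big\|_{L^2}.\]
The first term is \eqref{eqn:cscontrol}. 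For the second, using $\|Q^{1/2}g\|_{L^2} \leq \|q\|_{L^1}^{1/2}\|g\|_{L^2}$ (as in \Cref{lem:modcontrol}), it suffices to bound $\|c_1^{-1/2}\phi_{c_1}^2 - c_2^{-1/2}\phi_{c_2}^2\|_{L^2}$; again by smoothness of $c\mapsto c^{-1/2}\phi_c^2$ as a map $[\tfrac{1}{2}c_{\min},2c_{\max}]\to L^2(\R)$ (obtained from the scaling identity and dominated convergence on the resulting explicit $\sech^2$ profile), this is Lipschitz in $c$, which gives the desired $C_{20}|c_1 - c_2|$ contribution (possibly after enlarging $C_{20}$).

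The only mildly nontrivial step is verifying the $c$-Lipschitz estimates for $c_d(0,c)$ and $c^{-1/2}\phi_c^2$: this is routine given the scaling $\phi_c(x)=c\phi_1(\sqrt{c}x)$ and the smoothness of $c\mapsto K_c(0)^{-1}$ away from $c=0$, so there is no real obstruction on the compact range of $c$ considered. Enlarging $C_{20}$ to absorb all the resulting constants then yields the statement.
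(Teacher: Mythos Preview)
Your proposal is correct and follows essentially the same approach as the paper: split each difference via the triangle inequality into a $v$-dependent piece controlled by \Cref{lem:modcontrol} and a $c$-dependent piece handled by Lipschitz continuity of $c\mapsto c_d(0,c)$ and $c\mapsto Q^{1/2}c_s(0,c)$ on the compact interval. The paper simply asserts the Lipschitz bound without the additional justification you provide, so your version is a slightly more detailed rendering of the same argument.
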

\begin{proof}
Recalling that $c_f(0,c)=\tfrac{4}{3}c$, we estimate
\[\big|c_f(v,c_1)-\tfrac{4}{3}c_{2}\big|\leq \big|c_f(v,c_1)-c_f(0,c_1)\big|+\big|\tfrac{4}{3}c_1-\tfrac{4}{3}c_{2}\big|.\]
Applying \Cref{lem:modcontrol} to estimate the first term yields
\[\big|c_f(v,c_1)-\tfrac{4}{3}c_{2}\big|\leq C_2\|v\|_{L_w^2}+\tfrac{4}{3}|c_1-c_{2}|.\]
The remaining inequalities follow analogously through the Lipschitz bound
\begin{align*}
    |c_d(0,c_1)-c_d(0,c_2)|+\|Q^{1/2}c_s(0,c_1)-Q^{1/2}c_s(0,c_2)\|_{L^2}\leq C_{20}|c_1-c_2|
\end{align*}
on $[\tfrac{1}{2}c_{\mathrm{min}},2c_{\mathrm{max}}]$.
\end{proof}
As a further preparation, we establish control on the stochastic integral in \eqref{eqn:ccap}. Recall the notation
\[t_{\mathrm{ap}}(\lambda)=\sup\big\{t\geq 0:|c(t)-c_{\mathrm{ap}}(t)|\leq\lambda\big\},\]
and let $\lambda_0=\min\{\tfrac{1}{2}c_{\mathrm{min}},c_{\mathrm{max}}\}$.

\begin{lemma}\label{lem:BDG}
 Assuming \hyperlink{hyp:initial}{S1}, \hyperlink{hyp:weight}{S2} and  \hyperlink{hyp:constants}{C1}, there exists a constant $C_{21}>0$ so that for each $T\geq0$ and $\eta\in[0, \delta_2]$
we have
    \begin{align*}
    \mathbb{E}\sup_{0\leq t \leq T\wedge t_{c}\wedge t_{\mathrm{ap}}(\lambda_0)\wedge t_{\mathrm{st}}(\eta)} \Big|&\int_0^t \langle c_s(v,c)-\tfrac{2}{9}c_{\mathrm{ap}}^{-1/2}\phi_{c_{\mathrm{ap}}}^2, T_{\xi} \d W_{t'}^Q \rangle\Big|
    \\\leq & C_{21} \sqrt{T}\mathbb{E}\sup_{0\leq {t'} \leq T \wedge t_{\mathrm{st}}(\eta)}\|v({t'})\|_{L^2_w}
    +C_{21} \sqrt{T}\mathbb{E}\sup_{0\leq {t'} \leq T\wedge t_{c}\wedge t_{\mathrm{ap}}(\lambda_0)} |c({t'})-c_{\mathrm{ap}}({t'})|.
\end{align*}
    \end{lemma}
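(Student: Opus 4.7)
The plan is to apply the Burkholder--Davis--Gundy inequality in the form already used in the proof of \Cref{prop:l2control} (i.e.\ \cite[Proposition 2.1]{BDG}), then control the Hilbert--Schmidt norm of the integrand by combining \Cref{lem:HS} with \Cref{lem:lip}. The key preliminary observation is the identity $\tfrac{2}{9}c^{-1/2}\phi_{c}^{2}=c_{s}(0,c)$, which was already recorded immediately after \eqref{eqn:dphi}. Rewriting the integrand as $c_{s}(v,c)-c_{s}(0,c_{\mathrm{ap}})$ places us exactly in the setting of the third estimate in \Cref{lem:lip}.

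Let me abbreviate $\tau:=T\wedge t_{c}\wedge t_{\mathrm{ap}}(\lambda_{0})\wedge t_{\mathrm{st}}(\eta)$. I will first verify that on $[0,\tau]$ the hypotheses of \Cref{lem:lip} are satisfied with $c_{1}=c(t')$ and $c_{2}=c_{\mathrm{ap}}(t')$. Indeed, $t_{c}$ guarantees $c(t')\in[c_{\mathrm{min}},c_{\mathrm{max}}]$, while $t_{\mathrm{ap}}(\lambda_{0})$ combined with the choice $\lambda_{0}=\min\{\tfrac{1}{2}c_{\mathrm{min}},c_{\mathrm{max}}\}$ places $c_{\mathrm{ap}}(t')$ in $[\tfrac{1}{2}c_{\mathrm{min}},2c_{\mathrm{max}}]$. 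Moreover, $t_{\mathrm{st}}(\eta)$ with $\eta\leq\delta_{2}$ gives $\|v(t')\|_{L^{2}_{w}}\leq\|v(t')\|_{H^{1}_{w}}\leq\eta\leq\delta_{2}$. Applying BDG and then \Cref{lem:HS}, I obtain
\begin{align*}
\mathbb{E}\sup_{0\leq t\leq\tau}\Big|\int_{0}^{t}\langle c_{s}(v,c)-c_{s}(0,c_{\mathrm{ap}}),T_{\xi}\d W_{t'}^{Q}\rangle\Big|
&\leq \tilde{C}\,\mathbb{E}\Big(\int_{0}^{\tau}\big\|Q^{1/2}\bigl(c_{s}(v,c)-c_{s}(0,c_{\mathrm{ap}})\bigr)\big\|_{L^{2}}^{2}\,\d t'\Big)^{1/2}.
\end{align*}

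The third estimate of \Cref{lem:lip} then bounds the integrand pointwise by $C_{2}\|v\|_{L^{2}_{w}}+C_{20}|c-c_{\mathrm{ap}}|$. Using $(a+b)^{2}\leq 2a^{2}+2b^{2}$, bounding the $L^{2}(0,\tau;\d t')$ norm by $\sqrt{\tau}\leq\sqrt{T}$ times the supremum, and using subadditivity of the square root to split terms, I arrive at
\begin{align*}
\mathbb{E}\sup_{0\leq t\leq\tau}|I(t)|
&\leq \tilde{C}\sqrt{2T}\,\mathbb{E}\Big(C_{2}\sup_{0\leq t'\leq\tau}\|v(t')\|_{L^{2}_{w}}+C_{20}\sup_{0\leq t'\leq\tau}|c(t')-c_{\mathrm{ap}}(t')|\Big).
\end{align*}
Finally I enlarge the two suprema separately to $[0,T\wedge t_{\mathrm{st}}(\eta)]$ and $[0,T\wedge t_{c}\wedge t_{\mathrm{ap}}(\lambda_{0})]$, which only increases their values, and absorb the constants into a single $C_{21}$.

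There is no real obstacle here; the argument is essentially a direct packaging of BDG with the Lipschitz-type bounds in \Cref{lem:modcontrol}/\Cref{lem:lip}. The only item requiring care is the verification, at the very start, that the stopping times $t_{c}$, $t_{\mathrm{ap}}(\lambda_{0})$, and $t_{\mathrm{st}}(\eta)$ together enforce all three domain constraints required by \Cref{lem:lip} on the entire integration interval $[0,\tau]$; the specific choice of $\lambda_{0}$ is exactly what makes $c_{\mathrm{ap}}$ stay in the prescribed range.
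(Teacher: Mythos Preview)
Your proof is correct and follows essentially the same route as the paper: apply BDG to reduce to the $L^2$-in-time norm of $\|Q^{1/2}(c_s(v,c)-c_s(0,c_{\mathrm{ap}}))\|_{L^2}$, invoke \Cref{lem:lip} to bound this pointwise by $\|v\|_{L^2_w}+|c-c_{\mathrm{ap}}|$, and then pass to $\sqrt{T}$ times the supremum. Your explicit verification that the stopping times enforce the hypotheses of \Cref{lem:lip} (in particular that $\lambda_0$ keeps $c_{\mathrm{ap}}$ in $[\tfrac12 c_{\mathrm{min}},2c_{\mathrm{max}}]$) is a welcome addition that the paper leaves implicit.
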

    \begin{proof}
Let us write $\overline{t}= t_{c}\wedge t_{\mathrm{ap}}(\lambda_0)\wedge t_{\mathrm{st}}(\eta)$. The Burkholder-Davis-Gundy inequality \cite[proposition 2.1]{BDG} provides control of the stochastic integral:
\begin{align*}
    &\mathbb{E}\sup_{0\leq t \leq T\wedge\overline{t}} \Big|\int_0^t \langle c_s(v,c)-\tfrac{2}{9}c_{\mathrm{ap}}^{-1/2}\phi_{c_{\mathrm{ap}}}^2, T_{\xi} \d W_{t'}^Q \rangle\Big|\\
    \leq & \tilde{C}_1 \mathbb{E}\Big[ \Big(\int_0^{T\wedge\overline{t}}\big\|Q^{1/2}c_s(v,c)-Q^{1/2}[\tfrac{2}{9}c_{\mathrm{ap}}^{-1/2}\phi_{c_{\mathrm{ap}}}^2]\big\|_{L^2}^2\d t'\Big)^{1/2} \Big].\end{align*}
\Cref{lem:lip} then yields
    \begin{align*}
    &\mathbb{E}\sup_{0\leq t \leq T\wedge\overline{t}} \Big|\int_0^t \langle c_s(v,c)-\tfrac{2}{9}c_{\mathrm{ap}}^{-1/2}\phi_{c_{\mathrm{ap}}}^2, T_{\xi} \d W_{t'}^Q \rangle\Big|\\
    \leq&  \tilde{C}_2 \mathbb{E}\Big[ \Big(\int_0^{T\wedge\overline{t}}\|v\|^2_{L^2_w}+|c-c_{\mathrm{ap}}|^2\d t\Big)^{1/2} \Big]\\
    \leq & \tilde{C}_2 \sqrt{T}\mathbb{E}\sup_{0\leq t \leq T \wedge t_{\mathrm{st}}(\eta)}\|v(t)\|_{L^2_w}+\tilde{C}_2 \sqrt{T}\mathbb{E}\sup_{0\leq t \leq T\wedge t_{c}\wedge t_{\mathrm{ap}}(\lambda_0)} |c(t)-c_{\mathrm{ap}}(t)|.
\end{align*}
\end{proof}
 We are now ready to control $|c(t)-c_{\mathrm{ap}}(t)|$ via a Gr\"onwall argument, resulting in conditional control of the stopping time $t_{\mathrm{ap}}$.
\begin{lemma}  \label{lem:markov} Assuming \hyperlink{hyp:initial}{S1}, \hyperlink{hyp:weight}{S2}, \hyperlink{hyp:constants}{C1} and \hyperlink{hyp:decay}{C2}, there exists a constant $C_{22}>0$ so that for each $T\geq1, \eta\in(0,\delta_2]$, each $\sigma,\epsilon \in [0,\eta]$ and each $\lambda\in(0,\lambda_0]$,
we have
        \[\mathbb{P}\big[t_{\mathrm{ap}}(\lambda)<T\cap t_{\mathrm{st}}(\eta)\wedge t_c \geq T\big]\leq C_{22}Te^{\sigma^2 T}\frac{\eta^2}{\lambda}.\] 
\end{lemma}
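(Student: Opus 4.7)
\noindent\emph{Plan.} Write $Y(t) := c(t) - c_{\mathrm{ap}}(t)$ and set $\bar{t} := t_c \wedge t_{\mathrm{st}}(\eta) \wedge t_{\mathrm{ap}}(\lambda_0)$. Since $Y(0) = 0$, $\lambda \leq \lambda_0$, and $Y$ has continuous paths, the event $\{t_{\mathrm{ap}}(\lambda) < T\} \cap \{t_{\mathrm{st}}(\eta) \wedge t_c \geq T\}$ implies $\sup_{0 \leq t \leq T\wedge\bar{t}} |Y(t)| \geq \lambda$ (regardless of whether $t_{\mathrm{ap}}(\lambda_0)$ exceeds $T$, via the inclusion $t_{\mathrm{ap}}(\lambda) \leq t_{\mathrm{ap}}(\lambda_0)$). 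By Markov's inequality it thus suffices to prove
\[
\psi(T) := \mathbb{E}\sup_{0 \leq t \leq T\wedge\bar{t}} |Y(t)| \;\leq\; C T \eta^2 e^{\sigma^2 T}.
\]

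Using $c_f(0,c) = \tfrac{4}{3}c$, I rewrite $c_f(v,c) - \tfrac{4}{3}c_{\mathrm{ap}} = (c_f(v,c) - c_f(0,c)) + \tfrac{4}{3}Y$ in \eqref{eqn:ccap}, which together with the analogous splitting of $c_d(v,c) - c_d(0,c_{\mathrm{ap}})$ decomposes the drift of $Y$ as $A = p(t) + q(t) Y$. \Cref{lem:modcontrol} and \Cref{lem:lip} yield, for $t \leq \bar{t}$, the bounds $|p(t)| \leq C(\eta^2 + \eta(\epsilon |f(t)| + \sigma^2))$ and $|q(t)| \leq \tfrac{4}{3}\epsilon|f(t)| + C\sigma^2$. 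The key point is that $\epsilon \leq \eta$ together with $|f|\leq 1$ gives $\epsilon \int_0^T|f|\,dt \leq \eta T$, so $\int_0^T |p|\,dt \leq C T\eta^2$ -- producing the desired $T\eta^2$ inhomogeneity -- while \hyperlink{hyp:decay}{C2} yields $\int_0^T q(t)\,dt \leq \tfrac{4}{3} E + C\sigma^2 T$.

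I then take suprema and expectation in $|Y(t)| \leq \int_0^t|p| + \int_0^t q|Y| + \sigma|M(t)|$, where $M$ denotes the stochastic integral in \eqref{eqn:ccap}. \Cref{lem:BDG} gives $\sigma\, \mathbb{E}\sup|M| \leq C\sigma\sqrt{T}\,\eta + C\sigma\sqrt{T}\,\psi(T)$, and $\sigma\sqrt{T}\,\eta \leq \eta^2 T$ for $T \geq 1$. Hence
\[
\psi(T) \leq C T \eta^2 + C\sigma\sqrt{T}\,\psi(T) + \int_0^T q(s)\psi(s)\,ds.
\]
When $C\sigma\sqrt{T} \leq 1/2$ the BDG term absorbs on the left, and Gr\"onwall with the bound on $\int q$ delivers $\psi(T) \leq C_{22} T\eta^2 e^{\sigma^2 T}$ after absorbing $e^{4E/3}$ into the constant. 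In the complementary regime $\sigma\sqrt{T} > 1/(2C)$, the chain of inequalities $T\eta^2 \geq T\sigma^2 \geq 1/(4C^2)$ combined with $\lambda \leq \lambda_0$ forces $C_{22} T\eta^2 e^{\sigma^2 T}/\lambda$ to exceed $1$ for $C_{22}$ chosen sufficiently large, rendering the bound trivial.

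The main obstacle is the drift decomposition that cleanly separates the $Y$-proportional contributions (absorbed into the Gr\"onwall exponent, which remains $O(\sigma^2 T)$ thanks to \hyperlink{hyp:decay}{C2}) from the $Y$-free inhomogeneity (which attains $O(T\eta^2)$ only through \hyperlink{hyp:initial}{S1} via $|f|\leq 1$ together with $\epsilon \leq \eta$); the boundary regime $\sigma\sqrt{T}\gtrsim 1$ where the BDG absorption fails is handled separately by noting that the target bound is trivial there.
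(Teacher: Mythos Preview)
Your argument is correct and follows essentially the same route as the paper: split the drift via \Cref{lem:lip} into a $Y$-free inhomogeneity and a $Y$-proportional term, control the stochastic integral by \Cref{lem:BDG}, absorb the $\sigma\sqrt{T}\,\psi(T)$ contribution on the left under the restriction $\sigma\sqrt{T}\lesssim 1$, apply Gr\"onwall (using \hyperlink{hyp:decay}{C2} to keep the exponent at $O(\sigma^2 T)$), and conclude with Markov's inequality. The one addition in your write-up is the explicit treatment of the regime $\sigma\sqrt{T}>1/(2C)$, where you observe that $T\eta^2/\lambda$ is bounded below so the claimed inequality is vacuous; the paper simply imposes the restriction $\sigma\sqrt{T}\leq 1/(2C_{21})$ without commenting on the complementary case, so your version is slightly more complete in this respect.
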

\begin{proof}
    
    Applying \Cref{lem:lip} and \Cref{lem:BDG} to the SDE \eqref{eqn:ccap} yields
            \begin{align*}
        |c(t)-c_{\mathrm{ap}}(t)|\leq  & C_2\int_0^t \|v\|^2_{L^2_w}\d t'+C_2\epsilon \int_0^t \|v\|_{L^2_w}\d t'+\tfrac{4}{3}\epsilon \int_0^t|f(t')||c-c_{\mathrm{ap}}|\d t'\\
        &+C_2\sigma^2 \int_0^t (1+\|v\|_{L^2_w}^2)\|v\|_{L^2_w}\d t'+C_{20}\sigma^2\int_0^t|c-c_{\mathrm{ap}}| \ \d t'\\
        &+\sigma\Big|\int_0^t \langle c_s(v,c)-\tfrac{2}{9}c_{\mathrm{ap}}^{-1/2}\phi_{c_{\mathrm{ap}}}^2, T_{\xi} \d W_{t'}^Q\rangle\Big|,
    \end{align*}
     for all $t\in[0,t_{c}\wedge t_{\mathrm{ap}}(\lambda_0)]$. Let us once more write $\overline{t}=t_{c}\wedge t_{\mathrm{ap}}(\lambda_0)\wedge t_{\mathrm{st}}(\eta)$. In addition, we introduce the notation
     \[E(t):=\mathbb{E}\sup_{0\leq t' \leq t\wedge \overline{t}} |c(t')-c_{\mathrm{ap}}(t')|,\quad t\geq 0.\]
     Inspecting the bound above implies
\begin{align}
    E(T)\leq  & C_2T \eta^2+C_2\epsilon T \eta+\tfrac{4}{3}\epsilon \int_0^T|f(t)| E(t)\d t\nonumber\\
        &+C_2\sigma^2 T (1+\eta^2)\eta+C_{20}\sigma^2\int_0^TE(t)\d t \nonumber\\
        &+\sigma C_{21} \sqrt{T} \Big(\eta+E(T)\Big).\label{eqn:toleft}
\end{align}
Imposing the restriction $\sigma\sqrt{T}\leq \frac{1}{2C_{21}}$, we may bring the last term in \eqref{eqn:toleft} to the left to find
\begin{align*}
    \tfrac{1}{2}E(T)\leq  & C_2T \eta^2+C_2\epsilon T \eta+\tfrac{4}{3}\epsilon \int_0^T|f(t)| E(t)\d t\\
        &+C_2\sigma^2 T (1+\eta^2)\eta+C_{20}\sigma^2\int_0^TE(t)  \d t+C_{21}\sigma \eta \sqrt{T}.
\end{align*}
Gr\"onwall's inequality then yields
\begin{align*}
    \tfrac{1}{2}E(T)\leq  & C_2\exp\Big( \int_0^T C_{20}\sigma^2+\frac{4}{3}\epsilon |f(t)|\d t\Big)\\
    &\times\Big(T \eta^2+\epsilon T \eta+\sigma^2 T (1+\eta^2)\eta+\frac{C_{21}}{C_2}\sigma \eta \sqrt{T} \Big),
\end{align*}
in which \hyperlink{hyp:decay}{C2} ensures
\[\exp\Big( \int_0^T C_{20}\sigma^2+\frac{4}{3}\epsilon |f(t)|\d t\Big)\leq \exp\Big( C_{20}\sigma^2T+\frac{4}{3}E\Big).\]
Markov's inequality finally yields
\begin{align*}
    \mathbb{P}\Big[\sup_{0\leq t' \leq T\wedge \overline{t}} |c(t')-c_{\mathrm{ap}}(t')|\geq\lambda \Big]\leq &\frac{2}{\lambda}  C_2\exp\Big( C_{20}\sigma^2T+\frac{4}{3}E\Big)\\
    &\times\Big(T \eta^2+\epsilon T \eta+\sigma^2 T (1+\eta^2)\eta+\frac{C_{21}}{C_2}\sigma \eta \sqrt{T} \Big).
\end{align*}
To complete the proof, note that $t_{\mathrm{ap}}(\lambda)<T$ while $t_{\mathrm{st}}(\eta)\wedge t_c\geq T$ implies 
\[\sup_{0\leq t'\leq T\wedge \overline{t}}|c(t')-c_{\mathrm{ap}}(t')|> \lambda .\]
\end{proof}
\begin{proof}[Proof of \Cref{thm:validity}]
For any $\eta\geq 0$, we have the union bound
\[\mathbb{P}[t_{\mathrm{ap}}(\lambda)<T]\leq  \mathbb{P}[t_{\mathrm{ap}}(\lambda)<T\cap t_{\mathrm{st}}(\eta)\wedge t_c\geq T]+\mathbb{P}[t_{\mathrm{st}}(\eta)\wedge t_c<T]\]
and hence
\[\mathbb{P}[t_{\mathrm{ap}}(\lambda)<T]\leq  \inf_{\eta \geq 0}\Big(\mathbb{P}[t_{\mathrm{ap}}(\lambda)<T\cap t_{\mathrm{st}}(\eta)\wedge t_c\geq T]+\mathbb{P}[t_{\mathrm{st}}(\eta)\wedge t_c<T]\Big).\]
Applying \Cref{thm:bigthm} and \Cref{lem:markov} now gives
\[\mathbb{P}[t_{\mathrm{ap}}(\lambda)<T]\leq (C+C_{22})T \inf_{\eta \geq 0}\Big(\frac{\eta^2}{\lambda}+e^{-\delta \eta^2/\sigma^2}\Big)+CT\sigma^2\log(1/\sigma).\]
In case $\sigma^2<\lambda \delta$, the infimum is attained at 
\[\eta^2=\frac{\sigma^2}{\delta}\log\Big(\frac{\lambda \delta}{\sigma^2}\Big)\]
yielding
\[\mathbb{P}[t_{\mathrm{ap}}(\lambda)<T]\leq (C+C_{22})T {\frac{\sigma^2}{\lambda\delta}\Big(\log\Big(\frac{\lambda \delta}{\sigma^2}\Big)}+1\Big)+CT\sigma^2\log(1/\sigma).\]
Upon increasing the constant $C$ if necessary, we obtain the result \eqref{eqn:approximation}.
\end{proof}

\appendix

\section{Stopping times}
\label{app:stop}
Here, we provide an overview of the various stopping times introduced throughout the work. In relation to the global modulation system $\big(v(t),c(t),\xi(t)\big)$ introduced in \Cref{sec:global}, we 
use 
the stopping times
\begin{align*}
t_c=&\sup\{t\geq 0:c(t) \in [c_{\mathrm{min}},c_{\mathrm{max}}]\};\\
t_{\mathrm{log}}=&\sup\{t\geq 0 :|\log(c(t)/c_*)-\frac{4}{3}\epsilon \int_0^t|f(t')|\d t'|\leq E\};\\
t_{\mathrm{st}}(\eta)=& \sup\big\{t\geq 0:\|v(t)\|_{H_w^1}\leq \eta\big\};\\
t_{\mathrm{en}}(\eta)=&\sup\big\{t\geq 0:\big\|v(t)\big\|_{L^2}\leq \eta  \big\};\\
t_{\mathrm{ap}}(\eta)=&\sup\big\{t\geq 0:|c(t)-c_{\mathrm{ap}}(t)|\leq\eta\big\}.
\end{align*}
For each $T\geq 0$, the following stopping times are related to the local modulation system \linebreak $\big(v^T(s),c^T(s),\xi^T(s)\big)$ introduced in \Cref{sec:local}:
\begin{align*}\tau^T_{\mathrm{st}}(\eta)=&\sup\big\{s\geq 0:\big\|v^T(s)\big\|_{H^1_w}\leq \eta  \big\};\\
    \tau^T_c=&\sup\big\{s\geq 0:c(T+s)\in[\tfrac{1}{2}c_{\mathrm{min}},2c_{\mathrm{max}}]  \big\};\\
   \tau^T_{\mathrm{en}}(\eta)=&\sup\big\{s\geq 0:\big\|v^T(s)\big\|_{L^2}\leq \eta  \big\};\\
   \tau^{T}_{\mathrm{amp,1}}(\eta)
   =&\sup\big\{s\geq 0:\big|c(T)-c^T(s)\big|\leq \delta_*\eta\}; \\
   \tau^{T}_{\mathrm{amp,2}}(\eta)
   =&\sup\big\{s\geq 0:\big|c(T+s)-c^T(s)\big|\leq \delta_*\eta\}; \\
  \tau^{T}_{\mathrm{pos,1}}(\eta)
   =&\sup\big\{s\geq 0:\big|\xi(T)+c(T)s-\xi^T(s)\big|\leq 2\Delta T \delta_*\eta  \big\};\\
   \tau^{T}_{\mathrm{pos,2}}(\eta)
   =&\sup\big\{s\geq 0:\big|\xi(T+s)-\xi^T(s)\big|\leq 2\Delta T \delta_*\eta  \big\},
\end{align*}
and
\[\tau^T_{\mathrm{mod}}(\eta)=\tau^{T}_{\mathrm{amp},1}(\eta)\wedge\tau^{T}_{\mathrm{amp},2}(\eta)\wedge\tau^{T}_{\mathrm{pos,1}}( \eta)\wedge \tau^{T}_{\mathrm{pos,2}}( \eta).\]

\section{Technical proofs}
\label{app:mild}
Here, we provide the proofs of various lemmas which were omitted in 
\Cref{sec:global}, \Cref{sec:local}  and \Cref{sec:weighted}.
\begin{proof}[Proof of \Cref{lem:innerprod}]
We derive the evolution equation for $\big\langle v(t),\phi_c(t)\big\rangle$, noting that the remaining evolution equation for $\big\langle v(t),\zeta_c(t)\big\rangle$ follows analogously. First, we introduce the notation
 \[\big\langle v(t),\phi_{c(t)}\big\rangle=\big\langle u(t,\cdot+\xi(t)),\phi_{c(t)}\big\rangle -\big\langle\phi_{c(t)},\phi_{c(t)}\big\rangle =\big\langle u(t),\phi_{c(t)}(\cdot-\xi(t))\big\rangle -6c^{3/2}(t)=:F\big(u(t),c(t),\xi(t)\big).\]
 We now apply the mild It\^{o} formula to the functional $F:H^1\times\R\times \R\to \R$. We therefore interpret the tuple $(u,c,\xi)$ as a mild process with respect to the $C_0$-group $\{S(t)\}_{t\in \R}$ given by
 \begin{align*}
     S(t)=\begin{bmatrix}
         e^{-\partial_x^3t} & 0 & 0\\
         0 & I_{\R} &0\\
         0 & 0 & I_{\R}
     \end{bmatrix}, \quad t\in \R.
 \end{align*}
 We collect that $F$ is twice Fr\'echet differentiable, with first derivatives
 \begin{align*}
     \d_uF(u,c,\xi)[v]=&\big\langle v,\phi_c(\cdot-\xi)\big\rangle,\\
     \d_cF(u,c,\xi)=&\big\langle u,\partial_c\phi_c(\cdot-\xi)\big\rangle-9c^{1/2},\\
     \d_{\xi} F(u,c,\xi)=&-\big\langle u,\partial_x\phi_c(\cdot-\xi)\big\rangle,
\end{align*}
and second derivatives
\begin{align*}
     \d^2_{uu}F(u,c,\xi)[v,w]=&0,\\
     \d^2_{cc}F(u,c,\xi)=&\big\langle u,\partial^2_c\phi_c(\cdot-\xi)\big\rangle-\tfrac{9}{2}c^{-1/2},\\
     \d^2_{\xi\xi}F(u,c,\xi)=&\big\langle u,\partial_x^2\phi_c(\cdot-\xi)\big\rangle,\\
     \d^2_{uc}F(u,c,\xi)[v]=&\big\langle v,\partial_c\phi_c(\cdot-\xi)\big\rangle,\\
     \d^2_{u\xi}F(u,c,\xi)[v]=&-\big\langle v,\partial_x\phi_c(\cdot-\xi)\big\rangle,\\
     \d^2_{c\xi}F(u,c,\xi)=&-\big\langle u,\partial^2_{cx}\phi_c(\cdot-\xi)\big\rangle.
 \end{align*}
 For any orthonormal basis $\{e_k\}_{k=0}^\infty$ of $L^2$,
 \cite[Theorem 1]{mild} then yields:
 \begin{align}\label{eqn:Fmild}
    F\big(u(t),c(t),\xi(t)\big)=& F\big(e^{-\partial_x^3t}\phi_{c_*},c_*,0\big)+\int_0^tF_1(t,t')\d t'+\sigma^2\int_0^tF_2(t,t')\d t'\\
    &+\sigma^2\sum_{k=0}^\infty\int_0^tF_3(t,t',k)\d t' +\sigma\int_0^tF_4(t,t')\d W^ Q_{t'},\nonumber
    \end{align}
with
\begin{align*}
    F_1(t,t')=& \d_uF\big(e^{-\partial_x^3(t-t')}u,c,\xi\big)\big[ e^{-\partial_x^3 (t-t')}(-\partial_x(u^2)+\epsilon f(t')u)\big]\\
    &+ \d_cF\big(e^{-\partial_x^3(t-t')}u,c,\xi\big) c_d^{\sigma,\epsilon} + \d_{\xi} F\big(e^{-\partial_x^3(t-t')}u,c,\xi\big) \xi_d^{\sigma,\epsilon},\\
     F_2(t,t')=&\tfrac{1}{2}\d^2_{cc}F\big(e^{-\partial_x^3(t-t')}u,c,\xi)\big\|Q^{1/2}c_s \|_{L^2}^2+\tfrac{1}{2}\d^2_{\xi\xi}F\big(e^{-\partial_x^3(t-t')}u,c,\xi\big)\|Q^{1/2}\xi_s \|_{L^2}^2\\
    &+\d^2_{c\xi}F\big(e^{-\partial_x^3(t-t')}u,c,\xi\big)\langle Q^{1/2}\xi_s ,Q^{1/2}c_s \rangle,\\
     F_3(t,t',k)=&\d^2_{uc}F\big(e^{-\partial_x^3(t-t')}u,c,\xi\big)
     [e^{-\partial_x^3(t-t')}uQ^{1/2}e_k]\langle c_s ,T_{\xi} Q^{1/2}e_k\rangle\\
      &+ \d^2_{u\xi}F\big(e^{-\partial_x^3(t-t')}u,c,\xi\big)
     [e^{-\partial_x^3(t-t')}uQ^{1/2}e_k]\langle \xi_s ,T_{\xi} Q^{1/2}e_k\rangle,\\
    F_4(t,t')[h]=& \d_uF\big(e^{-\partial_x^3(t-t')}u,c,\xi\big)[ e^{-\partial_x^3 (t-t')}uh]\\
    &+ \d_cF\big(e^{-\partial_x^3(t-t')}u,c,\xi\big) \langle c_s ,T_{\xi} h\rangle+\d_{\xi} F\big(e^{-\partial_x^3(t-t')}u,c,\xi\big) \langle \xi_s ,T_{\xi}h\rangle.
 \end{align*}
 Above, we have suppressed the dependence of $c_d^{\sigma,\epsilon},\xi_d^{\sigma,\epsilon}$ on $(v(t'),c(t'),t')$ and of $c_s,\xi_s$ on $(v(t'),c(t'))$. Substituting the derivatives, this becomes
 \begin{align*}
 F\big(e^{-\partial_x^3t}\phi_{c_*},c_*,0\big)=&\langle e^{-\partial_x^3t}\phi_{c_*},\phi_{c_*}\rangle -6c_*^{3/2}, \\
    F_1(t,t')=& \big\langle  e^{-\partial_x^3 (t-t')}(-\partial_x(u^2)+\epsilon f(t')u),\phi_c(\cdot-\xi)\big\rangle\\
    &+\big(\langle e^{-\partial_x^3(t-t')}u,\partial_c\phi_c(\cdot-\xi)\rangle-9c^{1/2}\big) c_d^{\sigma,\epsilon}- \big\langle e^{-\partial_x^3(t-t')}u,\partial_x\phi_c(\cdot-\xi)\big\rangle \xi_d^{\sigma,\epsilon},\\
     F_2(t,t')=&\tfrac{1}{2}\big(\langle e^{-\partial_x^3(t-t')}u,\partial^2_c\phi_c(\cdot-\xi)\rangle-\tfrac{9}{2}c^{-1/2}\big)\|Q^{1/2}c_s\|_{L^2}^2\\
     &+\tfrac{1}{2}\big\langle e^{-\partial_x^3(t-t')}u,\partial_x^2\phi_c(\cdot-\xi)\big\rangle\|Q^{1/2}\xi_s\|_{L^2}^2\\
    &-\big\langle e^{-\partial_x^3(t-t')}u,\partial^2_{cx}\phi_c(\cdot-\xi)\big\rangle\big\langle Q^{1/2}\xi_s,Q^{1/2}c_s\big\rangle,\\
     F_3(t,t',k)=&\big\langle e^{-\partial_x^3(t-t')}uQ^{1/2}e_k,\partial_c\phi_c(\cdot-\xi)\big\rangle\big\langle c_s,T_{\xi} Q^{1/2}e_k\big\rangle\\
     &-\langle e^{-\partial_x^3(t-t')}uQ^{1/2}e_k,\partial_x\phi_c(\cdot-\xi)\rangle\langle \xi_s,T_{\xi} Q^{1/2}e_k\rangle,\\
    F_4(t,t')[h]=&  \big\langle  e^{-\partial_x^3 (t-t')}u h,\phi_c(\cdot-\xi)\big\rangle+ \big(\langle e^{-\partial_x^3(t-t')}u,\partial_c\phi_c(\cdot-\xi)\rangle-9c^{1/2}\big) \langle c_s,T_{\xi}h\rangle\\
    &-\big\langle e^{-\partial_x^3(t-t')}u,\partial_x\phi_c(\cdot-\xi)\big\rangle \langle \xi_s,T_{\xi}h\rangle.
 \end{align*}
We now show how to convert the mild expression \eqref{eqn:Fmild} into a strong form\footnote{This computation resembles the procedure for passing from a mild to strong solution in case sufficient regularity is available.}, focusing on the case $\sigma=\epsilon=0$ for ease of exposition. 
Differentiating $\big\langle v(t),\phi_{c(t)}\big\rangle$ via Leibniz' rule gives 
 \begin{align*}
    \partial_t\big\langle v(t),\phi_{c(t)}\big\rangle=&\partial_tF\big(u(t),c(t),\xi(t)\big)=\partial_tF\big(e^{-\partial_x^3t}\phi_{c_*},c_*,0\big)+F_1(t,t)+\int_0^t \partial_tF_1(t,t')\d t',
    \end{align*}
and thus
\begin{align*}
    \partial_t\big\langle v(t),\phi_{c(t)}\big\rangle=&\big\langle e^{-\partial_x^3t}\phi_{c_*},\partial_x^3\phi_{c_*}\big\rangle- \big\langle \partial_x(u^2(t)),\phi_c(\cdot-\xi)\big\rangle-\int_0^t \big\langle \partial_x\big(u^2(t')\big),e^{\partial_x^3 (t-t')}\partial_x^3\phi_c(\cdot-\xi)\big\rangle\d {t'}\\
    &+\big(\langle u(t),\partial_c\phi_c(\cdot-\xi)\rangle-9c^{1/2}\big)c_d^0\big(v(t)\big)+\int_0^t \big\langle u(t'),e^{\partial_x^3(t-t')}\partial_x^3\partial_c\phi_c(\cdot-\xi)\big\rangle c_d^0\big(v(t')\big)\d {t'}\\
    &-\big\langle u(t),\partial_x\phi_c(\cdot-\xi)\big\rangle \xi_d^0\big(v(t)\big)-\int_0^t  \big\langle u(t'),e^{\partial_x^3(t-t')}\partial_x^3\partial_x\phi_c(\cdot-\xi)\big\rangle \xi_d^0\big(v(t')\big)\d {t'},
 \end{align*}
where we have moved the semigroup to the other side of the inner products via the adjoint relation $(e^{-\partial_x^3t})^*=e^{\partial_x^3t}$. Now we recognize the mild formula
\begin{align*}
  \big\langle u(t),\partial_x^3\phi_{c}(\cdot-\xi)\big\rangle=&\big\langle e^{-\partial_x^3t}\phi_{c_*},\partial_x^3\phi_{c_*}\big\rangle-\int_0^t \big\langle \partial_x\big(u^2(t')\big),e^{\partial_x^3 (t-t')}\partial_x^3\phi_c(\cdot-\xi)\big\rangle\d {t'}\\
    &+\int_0^t \big\langle u(t'),e^{\partial_x^3(t-t')}\partial_x^3\partial_c\phi_c(\cdot-\xi)\big\rangle c_d^0\big(v(t')\big)\d {t'}\\
    &-\int_0^t  \big\langle u(t'),e^{\partial_x^3(t-t')}\partial_x^3\partial_x\phi_c(\cdot-\xi)\big\rangle \xi_d^0\big(v(t')\big)\d {t'}.
\end{align*}
We thus we arrive at the strong form
 \begin{align*}
    \d \big\langle v(t),\phi_{c(t)}\big\rangle=& \big\langle u(t),\partial_x^3\phi_{c}(\cdot-\xi)\big\rangle \ \d t- \big\langle \partial_x\big(u^2(t)\big),\phi_c(\cdot-\xi)\big\rangle\ \d t\\
    &+\big(\big\langle u(t),\partial_c\phi_c(\cdot-\xi)\big\rangle-9c^{1/2}\big)c_d^0\big(v(t)\big)\ \d t-\big\langle u(t),\partial_x\phi_c(\cdot-\xi)\big\rangle \xi_d^0\big(v(t)\big)\ \d t.
 \end{align*}
 After substituting $u(t,\cdot+\xi)=\phi_{c}+v(t)$:
 \begin{align*}
    \d \big\langle v(t),\phi_{c(t)}\big\rangle=& \big\langle v(t),\partial_x^3\phi_{c}\big\rangle \ \d t- \big\langle \partial_x(\phi_{c}+v(t))^2,\phi_c\big\rangle\ \d t\\
    &+\big(\big\langle \phi_{c}+v(t),\partial_c\phi_c\big\rangle-9c^{1/2}\big)c_d^0\big(v(t)\big)\ \d t-\big\langle v(t),\partial_x\phi_c\big\rangle \xi_d^0\big(v(t)\big)\ \d t
 \end{align*}
where rewriting
\[\big\langle \partial_x(\phi_{c}+v(t))^2,\phi_c\big\rangle=-2\big\langle v(t),\phi_c\partial_x\phi_c\big\rangle -\big\langle N\big(v(t)\big),\phi_c\big\rangle\]
leads to
 \begin{align*}
    \d \big\langle v(t),\phi_{c(t)}\big\rangle=& \big\langle v(t),\partial_x^3\phi_{c}+2\phi_c\partial_x\phi_c-c\partial_x\phi_c\big\rangle \ \d t+ \big\langle N\big(v(t)\big),\phi_c\big\rangle\ \d t\\
    &+\big(\langle \phi_{c}+v(t),\partial_c\phi_c\rangle-9c^{1/2}\big)c_d^0\big(v(t)\big)\ \d t-\big\langle v(t),\partial_x\phi_c\big\rangle \Omega_d^0\big(v(t)\big)\ \d t.
 \end{align*}
Using the traveling wave identity
$\partial_x^3\phi_{c}+2\phi_c\partial_x\phi_c-c\partial_x\phi_c=0$, we arrive at the result
 \begin{align*}
    \d \big\langle v(t),\phi_{c(t)}\big\rangle=&  \big\langle N\big(v(t)\big),\phi_c\big\rangle\ \d t+\big(\langle \phi_{c}+v(t),\partial_c\phi_c\rangle-9c^{1/2}\big)c_d^0\big(v(t)\big)\ \d t-\big\langle v(t),\partial_x\phi_c\big\rangle \Omega_d^0\big(v(t)\big)\ \d t.
 \end{align*}
 The $\sigma$- and $\epsilon$-dependent terms in \eqref{eqn:Fmild} can be treated analogously, which completes the proof.
\end{proof}

\begin{proof}[Proof of \Cref{prop:mild}]
We compute the mild form of
\[v^T(s,x)=u\big(T+s,x+\xi(T)+c(T)s\big)-\Phi^T(\mathbf{m}^T(s),s,x).\]
Recalling that 
\[\Phi^T(\mathbf{m}^T(s),s,x):=\phi_{c^T(s)}\big(x+\xi(T)+c(T)s-\xi^T(s)\big),\]
a straightforward application of It\^{o}'s lemma yields
\begin{align*}
    \d \Phi^T=& \Big[\partial_c\Phi^Tc_d^{\sigma,\epsilon,T}+\partial_x\Phi^T(c(T)-c^T-\Omega_d^{\sigma,\epsilon,T})\Big]\d s\\
    &+\frac{\sigma^2}{2}\Big[\partial^2_c\Phi^T\|Q^{1/2}c_s^{T}\|_{L^2}^2+\partial^2_x\Phi^T\|Q^{1/2}\xi_s^{T}\|_{L^2}^2\Big] \d s\\
    &+\sigma^2\partial^2_{xc}\Phi^T\langle Q^{1/2}c_s^{T},Q^{1/2}\xi_s^T\rangle\d s\\
    &+\sigma  \partial_c\Phi^T\langle c_s^{T},T_{\xi(T)+c(T)s}\d W^Q_{T+s}\rangle+\sigma \partial_x\Phi^T\langle\xi_s^{T},T_{\xi(T)+c(T)s}\d W^Q_{T+s}\rangle
\end{align*}
where we have suppressed the dependence of $\Phi^T,c_d^{\sigma,\epsilon,T}, \xi_d^{\sigma,\epsilon,T},c_s^T$ and $\xi_s^T$ on $(\mathbf{m}^T(s),s)$.
Using the traveling wave identity
\begin{align*}0=&-\partial_x^3\Phi^T-\partial_x(\Phi^T)^2+c^T\partial_x\Phi^T\\
&=\mathcal{L}_{c(T)}\Phi^T+\big(c^T-c(T)\big)\partial_x\Phi^T-\partial_x\big((\Phi^T)^2\big)-2\partial_x\big(\phi_{c(T)}\Phi^T\big)
\end{align*}
we may pass to the mild form
\begin{align}\label{eqn:mildPhi}
     \Phi^T(s)=&e^{\mathcal{L}_{c(T)}s}\phi_{c(T)}+\int_0^s e^{\mathcal{L}_{c(T)}(s-s')}\Big[-\partial_x((\Phi^T)^2)-2\partial_x(\phi_{c(T)}\Phi^T)\Big]\d s' \\
     &+\int_0^se^{\mathcal{L}_{c(T)}(s-s')}\Big[\partial_c\Phi^Tc_d^{\sigma,\epsilon,T}-\partial_x\Phi^T\Omega_d^{\sigma,\epsilon,T}\Big]\d s'\nonumber\\
    &+\frac{\sigma^2}{2}\int_0^se^{\mathcal{L}_{c(T)}(s-s')}\Big[\partial^2_c\Phi^T\|Q^{1/2}c_s^{T}\|_{L^2}^2+\partial^2_x\Phi^T\|Q^{1/2}\xi_s^{T}\|_{L^2}^2\Big] \d s'\nonumber\\
    &+\sigma^2\int_0^se^{\mathcal{L}_{c(T)}(s-s')}\partial^2_{xc}\Phi^T\langle Q^{1/2}c_s^{T},Q^{1/2}\xi_s^T\rangle\d s'\nonumber\\
    &+\sigma  \int_0^se^{\mathcal{L}_{c(T)}(s-s')}\partial_c\Phi^T\langle c_s^{T},T_{\xi(T)+c(T)s'}\d W^Q_{T+s'}\rangle\nonumber\\
    &+\sigma \int_0^se^{\mathcal{L}_{c(T)}(s-s')}\partial_x\Phi^T\langle\xi_s^{T},T_{\xi(T)+c(T)s'}\d W^Q_{T+s'}\rangle.\nonumber
\end{align}
Next, we derive a mild formula for $u(T+s,x+\xi(T)+c(T)s)$ based on
the identity
\begin{align*}
    u(T+s)=&e^{-\partial_x^3 s}u(T)-\int_0^s e^{-\partial_x^3 (s-s')}\partial_x\big(u^2(T+s')\big)\d s'+\epsilon\int_0^s f(T+s')e^{-\partial_x^3 (s-s')}u(T+s')\d s'\\
    &+\sigma\int_0^s e^{-\partial_x^3 (s-s')}u(T+s')\d W_{T+s'}^Q.
\end{align*}
The mild It\^{o} formula \cite{mild} now yields
\begin{align*}
    u\big(T+s,x+\xi(T)+c(T)s\big)=&e^{-\partial_x^3s}u\big(T,x+\xi(T)\big)\\
    &-\int_0^s e^{-\partial_x^3(s-s')}\partial_x\big(u^2(T+s',\cdot+\xi(T)+c(T)s')\big)\d s'\\
    &+\epsilon\int_0^s f(T+s')e^{-\partial_x^3(s-s')}u\big(T+s',\cdot+\xi(T)+c(T)s'\big)\d s'\\
    &+c(T)\int_0^s e^{-\partial_x^3(s-s')}u_x\big(T+s',\cdot+\xi(T)+c(T)s'\big)\d s'\\
    &+\sigma \int_0^s e^{-\partial_x^3(s-s')}u\big(T+s',\cdot+\xi(T)+c(T)s'\big) T_{\xi(T)+c(T)s'}\d W^Q_{T+s'}.
\end{align*}
Using $-\partial_x^3+c(T)\partial_x=\mathcal{L}_{c(T)}+2\partial_x(\phi_{c(T)}\cdot)$, we rephrase the formula above in terms of the semigroup  $\{e^{\mathcal{L}_{c(T)}s}\}_{s\geq0}$ to find:
\begin{align}\label{eqn:mildU}
u\big(T+s,x+\xi(T)+c(T)s\big)=&e^{\mathcal{L}_{c(T)}s}u\big(T,x+\xi(T)\big)\\
&+2\int_0^se^{\mathcal{L}_{c(T)}(s-s')}\partial_x\big(\phi_{c(T)}u(T+s',\cdot+\xi(T)+c(T)s')\big)\d s'\nonumber\\
    &-\int_0^s e^{\mathcal{L}_{c(T)}(s-s')}\partial_x\big(u^2(T+s',\cdot+\xi(T)+c(T)s')\big)\d s'\nonumber\\
    &+\epsilon\int_0^s f(T+s')e^{\mathcal{L}_{c(T)}(s-s')}u\big(T+s',\cdot+\xi(T)+c(T)s'\big)\d s'\nonumber\\
    &+\sigma \int_0^s e^{\mathcal{L}_{c(T)}(s-s')}u\big(T+s',\cdot+\xi(T)+c(T)s'\big) T_{\xi(T)+c(T)s'}\d W^Q_{T+s'}. \nonumber
\end{align}
Subtracting the mild formula \eqref{eqn:mildPhi} from \eqref{eqn:mildU}, we arrive at
\begin{align*}
    v^T(s)=&e^{\mathcal{L}_{c(T)}s}v(T)-\int_0^s e^{\mathcal{L}_{c(T)}(s-s')}\big[\partial_x((v^T)^2)+2\partial_x\big((\phi_{c(T)}-\Phi^T)v^T\big)\big]\d s'\\
    &+\epsilon\int_0^s f(T+s')e^{\mathcal{L}_{c(T)}(s-s')}\big[\Phi^T+v^T\big]\d s'\\ 
     &-\int_0^se^{\mathcal{L}_{c(T)}(s-s')}\Big[\partial_c\Phi^Tc_d^{\sigma,\epsilon,T}-\partial_x\Phi^T\Omega_d^{\sigma,\epsilon,T}\Big]\d s'\\
    &-\frac{\sigma^2}{2}\int_0^se^{\mathcal{L}_{c(T)}(s-s')}\Big[\partial^2_c\Phi^T\|Q^{1/2}c_s^{T}\|_{L^2}^2+\partial^2_x\Phi^T\|Q^{1/2}\xi_s^{T}\|_{L^2}^2\Big] \d s'\\
    &-\sigma^2\int_0^se^{\mathcal{L}_{c(T)}(s-s')}\partial^2_{xc}\Phi^T\langle Q^{1/2}c_s^{T},Q^{1/2}\xi_s^T\rangle\d s'\\
    &-\sigma  \int_0^se^{\mathcal{L}_{c(T)}(s-s')}\partial_c\Phi^T\langle c_s^{T},T_{\xi(T)+c(T)s'}\d W^Q_{T+s'}\rangle\\
    &-\sigma \int_0^se^{\mathcal{L}_{c(T)}(s-s')}\partial_x\Phi^T\langle\xi_s^{T},T_{\xi(T)+c(T)s'}\d W^Q_{T+s'}\rangle\\
      &+\sigma \int_0^s e^{\mathcal{L}_{c(T)}(s-s')}[\Phi^T+v^T] T_{\xi(T)+c(T)s'}\d W^Q_{T+s'},
\end{align*}
as desired.
\end{proof}

\begin{proof}[Proof of \Cref{thm:gaussian}]
Applying \cite[Theorem 4.5]{veraar} gives
\begin{align*}
    \mathbb{E}\sup_{t\in[0,T]} \big\|\int_0^t S(t-s)g(s)\d W^Q_s \big\|_{\mathcal{H}}^p\leq (K_p M\sqrt{p})^p T^{\frac{p}{2}-1}\int_0^T\mathbb{E}\Big[ \|g(t)\|^p_{\mathrm{HS}(L_Q^2,\mathcal{H})}\Big]\d t , 
\end{align*}
for $p\in(2,\infty)$, where $\limsup_{p\to \infty}K_p<\infty$. Assume without loss of generality that $K_p\leq K$ for $p>2$. By our assumption, 
\begin{align*}
    \mathbb{E}\sup_{t\in[0,T]} \big\|\int_0^t S(t-s)g(s)\d W^Q_s \big\|_{\mathcal{H}}^p\leq (BK M\sqrt{p}\sqrt{T})^p , 
\end{align*}
for $p>2$. Markov's inequality then gives
\begin{align*}
    \mathbb{P}\Big[ \sup_{t\in[0,T]}\big\|\int_0^t S(t-s)g(s)\d W^Q_s \big\|_{\mathcal{H}}^p \geq\lambda^p \Big] \leq (\lambda^{-1}BK M\sqrt{p}\sqrt{T})^p .
\end{align*}
For $\lambda > eBKM\sqrt{ T}$
 we may choose $p=(eBK M)^{-2}\lambda^2/T$ to conclude 
\[\mathbb{P}\Big[ \sup_{t\in[0,T]}\big\|\int_0^t S(t-s)g(s)\d W^Q_s \big\|_{\mathcal{H}} \geq\lambda \Big]=\mathbb{P}\Big[ \sup_{t\in[0,T]}\big\|\int_0^t S(t-s)g(s)\d W^Q_s \big\|_{\mathcal{H}}^p \geq\lambda^p \Big] \leq e^{-(eBKM)^{-2}\lambda^2/T}.\]
\end{proof}
\bibliographystyle{plain}
\bibliography{references}
\end{document}